\documentclass[a4paper,11pt]{scrartcl}

\RequirePackage[OT1]{fontenc} \RequirePackage{amsthm}
\RequirePackage[cmex10]{amsmath} 
\RequirePackage{amsfonts} \RequirePackage{amssymb} \RequirePackage{a4wide}
\RequirePackage{graphicx}
\RequirePackage{xcolor}
\RequirePackage[normalem]{ulem}

\usepackage[authoryear]{natbib}

\numberwithin{equation}{section}
\RequirePackage{a4wide} 

\theoremstyle{plain}
\newtheorem{definition}{Definition}[section]
\newtheorem{theorem}{Theorem}[section]
\newtheorem{corollary}{Corollary}[section]
\newtheorem{example}{Example}[section]
\newtheorem{remark}{Remark}[section]
\newtheorem{lemma}{Lemma}[section]


\def\@bysame#1{\vrule height 1.5pt depth -1pt width 3em \hskip
	0.5em\relax}

\newcommand{\N}{ \mathbb{N} }

\newcommand{\R}{ \mathbb{R} }

\newcommand{\wh}[1]{ \widehat{ #1 } }
\newcommand{\wt}[1]{ \widetilde{ #1 } }

\newcommand{\calC}{\mathcal{C}}
\newcommand{\calD}{\mathcal{D}}

\newcommand{\calF}{\mathcal{F}}

\newcommand{\eins}{\mathbf{1}}

\newcommand{\Var}{{\mbox{Var\,}}}



\newcommand{\chg}[2]{#2}

\begin{document}

\begin{center}
	\begin{minipage}{.8\textwidth}
		\centering 
		\LARGE Jackknife variance estimation for general two-sample statistics and applications to common mean estimators under ordered variances \\[0.5cm]

		\normalsize
		\textsc{Ansgar Steland and Yuan-Tsung Chang}\\[0.1cm]
		Institute of Statistics\\
		RWTH Aachen University\\
		Aachen, Germany\\
		Email: \verb+steland@stochastik.rwth-aachen.de+ \\ {\em and} \\
		
		Department of Social Information\\ 
		Mejiro University \\ 
		4-31-1 Nakaochiai, Shinjuku--ku \\ 
		Tokyo 161--8539, Japan 
		
	\end{minipage}
\end{center}


\begin{abstract}
	We study the jackknife variance estimator for a general class of two-sample statistics. As a concrete application we consider
  samples with a common mean but possibly different, ordered variances as arising in various fields such as interlaboratory experiments, field studies or the analysis of sensor data.  Estimators for the common mean under ordered variances typically employ random weights, which depend on the sample means and the unbiased variance estimators. They take different forms when the sample estimators are in agreement with the order constraints or not, which complicates even basic analyses such as estimating their variance. We propose to use the jackknife, whose consistency is established for general smooth two--sample statistics induced by continuously G\^ateux or Fr\'echet differentiable functionals, and, more generally, asymptotically linear two--sample statistics, allowing us to study a large class of common mean estimators. Further, it is shown that the common mean estimators under consideration satisfy a central limit theorem (CLT). We investigate the accuracy of the resulting confidence intervals by simulations and illustrate the approach by analyzing several data sets. 
\end{abstract}



\textit{Keywords:} Common mean; Data science; Central limit theorem; G\^ateaux derivative; Fr\'echet differentiability;  Graybill--Deal estimator; Jackknife;  Order constraint; Resampling

\section{Introduction}

We study the jackknife variance estimation methodology for a wide class of two-sample statistics including asymptotically linear  statistics and statistics induced by differentiable two-sample statistical functionals, thus extending the well studied one-sample results, see  \cite{EfronStein1981}, \cite{ShaoWu1998} and \cite{Steland2015} and the discussion below. Comparison of two samples by some statistic is a classical statistical design and also widely applied to analyze massive data arising in data science, e.g. when exploring such data by analyzing subsets. Our specific motivation comes from the following classical common mean estimation problem: 
In many applications several samples of measurements with common mean but typically with different degrees of uncertainties (in the sense of variances) are drawn. \chg{}{This setting generally arises when using competing measurement systems of different quality or if the factor variable defining the samples affects the dispersion. For example, when checking the octane level in gasoline at pump stations, inspectors use cheap handheld devices to collect many low precision measurements and send only a few samples to government laboratories for detailed analyses of high precision. In both cases the same mean octane level is measured, but the variance and the shape of the distribution may differ. The issue of samples with common mean but heterogeneous, ordered variances also arises in big data applications, for example (e.g.) when processing data from image sensors, see  \cite{Degerli} and \cite{Lin}, or accelerometors, \cite{Cemer}, as used in smartphones or specialized measurement systems. Here the thermo-mechanical (or Brownian) noise represents a major source of noise and depends on  temperature. Therefore, in the presence of a constant signal, samples taken under different conditions exhibit different variances and the order constraint is related to temperature.} It is worth mentioning that the general statistical problem how to combine estimators from different samples dates back to the works of \cite{Fisher1932}, \cite{Tippet1931} and \cite{Cochran1937}, cf. the discussion
given in \cite{KellerOlkin2004}.  In the present article, we study the classical problem to estimate the common mean in the presence of ordered variances and propose to use the a two-sample jackknife variance estimator to assess the uncertainty. 

The jackknife is easy to use and feasible for big data problems, since its computational costs are substantially lower than other techniques such as the bootstrap. Indeed, our simulations indicate that it also provides substantially higher accuracy of confidence intervals than the bootstrap for the common mean estimation problem. We therefore extend the jackknife methodology for smooth statistical functionals to a general two-sample framework and establish a new result, which holds as long the statistic of interest can be approximated by a linear statistic. This result goes beyong the case of smooth statistics induced by continuously differentiable functionals and allows us to treat a large class of common mean estimators. 

Since the jackknife has not yet been studied for two--sample settings, we establish its consistency and asymptotic unbiasedness for possibly nonlinear but asymptotically linear two--sample statistics. We introduce a specific new jackknife variance estimator for two samples with possibly unequal sample sizes $ n_1 $ and $ n_2 $, which is based on $ n_1 + n_2 $ leave-one-out replicates. In addition, for equal sample sizes we study an alternative procedure which generates replicates by leaving out pairs of observations. Both jackknife estimators are shown to be weakly consistent and asymptotically unbiased. Those general results allow us to show that the jackknife consistently estimates the variance of a large class of common mean estimators including many of those proposed in the literature. \chg{}{They are, however, also interesting in their own right. Firstly, because we provide conditions which are easier to verify in cases where the statistic of interest is not induced by a smooth statistical functional, e.g. due to discontinuities as arising in the common mean estimation problem. Secondly, since we provide a proof using elementary arguments avoiding the calculus of differentiable statistical functionals. Lastly, our approach shows that the pseudo-values are consistent estimates of the summands of the asymptotically equivalent linear statistic. In addition to these results, we also extend the known consistency results for continuously G\^ateaux- and Fr\'echet-differentiable statistical functionals addressing one-sample settings to two-sample settings resulting in a comprehensive treatment of the two-sample jackknife methodology.}

For the common mean estimation problem, which we studied in depth as a non-trivial application,  several common mean estimators have been discussed in the literature.
For unequal variances the Graybill-Deal (GD) estimator may be used, which weights the sample averages with the inverse unbiased variance estimates. If, however, an order constraint on the variances is imposed, several estimators have been proposed which dominate the GD estimator, see especially \cite{Nair1982}, \cite{Elfessi1992}, \cite{ChangEtAl2008} and \cite{ChangEtAl2012}. Those estimators are given by convex combinations of the sample means with weights additionally depending on whether the ordering of sample variances is in agreement with the order constraint on the variances.

When random weights are used, even the calculation of the variance of such a common mean estimator is a concern and has been only studied under the assumption of Gaussian samples for certain special cases such as the GD estimator. As a way out, we propose to employ the jackknife variance estimator of \cite{Quenouille1949} and \cite{Tukey1958}, which basically calculates the sample variance of a pseudo sample obtained by leave-one-out replicates of the statistic of interest. It has been extensively studied for the case of one sample problems by \cite{EfronStein1981} and \cite{ShaoWu1998}, the latter for asymptotically linear statistics, and recently by \cite{Steland2015} for the case of vertically weighted sample averages. Compared to other approaches the jackknife variance estimator has the advantage not to give underbiased estimates, cf. \cite[p.~42]{Efron1982} and \cite{EfronStein1981}. It also made its way in recent textbooks devoted to computer age statistics and data science, see \cite{EfronHastie2016}.

A further issue studied in this paper is the asymptotic distribution of common mean estimators. We show that common mean estimators using random weights are asymptotically normal under fairly weak conditions, which are satisfied by those estimators studied in the literature. Combining this result with the jackknife variance estimators allows us to construct asymptotic confidence intervals and to test statistical hypotheses about the common mean. \chg{}{In \cite{Steland2017} that approach was applied to real data from photovoltaics, compared with other methods and investigated by data-driven simulations using distributions arising in that field. It was found that confidence intervals based on the proposed methodology have notably higher accuracy in terms of the coverage probability, thus providing a real world example where the approach improves upon existing ones. In this paper, we broaden these data-driven results by a simulations study investigating distributions with different tail behavior.}

The organization of the paper is as follows. Section~\ref{sec: jackknife} studies the jackknife variance estimator. Two-sample statistics induced by G\^ateaux- and Fr\'echet-differentiable functionals are studied as well as asymptotically linear two--sample statistics. In Section~\ref{sec: Estimation} we introduce and review the common mean estimation problem for unequal and  ordered variances for two samples and discusses related results from the literature. Section~\ref{sec: jackknife common mean} presents the results about the jackknife variance estimation for common mean estimators with random weights and provides a central limit theorem. Lastly, Section~\ref{sec: data_analysis} presents the simulations and analyzes three data sets from physics, technology and social information, in order to illustrate the approach.

\section{The jackknife for two-sample statistics}
\label{sec: jackknife}

The Quenouille--Tukey jackknife, see \cite{Quenouille1949},
\cite{Tukey1958} and \cite{Miller1974}, is a simple and effective resampling technique for bias and variance estimation, see also the monograph \cite{Efron1982}. For a large class of one sample statistics the consistency of the jackknife variance estimator has been studied in depth by \cite{ShaoWu1998} and recently in \cite{Steland2015}. \cite{LeeY1991} studied jackknife variance estimation for a one-way random effects model by simulations. In the present section, we extend the jackknife to quite general two--sample statistics. To the best of our knowledge, the results of this section are new and extend the existing theoretical investigations. 

We shall first discuss the case of {\em asymptotically linear} two-sample statistics considering the cases of unequal and equal sample sizes separately, as it turns out that for equal sample sizes one may define a simpler jackknife variance estimator. Then we study the case of two-sample statistics induced by a two-sample differentiable statistical functional extending the known results for the one-sample setting.

Let us consider a two-sample \chg{test}{} statistic $ T_n $, i.e. a statistic $ T_n = T_n( X_{11}, \dots, X_{1n_1}, X_{21}, \dots, X_{2n_2} ) $ which is symmetric in its first $ n_1 $ arguments as well as in its last $ n_2 $ arguments; $ n = n_1 + n_2 $ is the total sample size. We assume that $ T_n $ has the following
property: For all sample sizes $ n_1, n_2 \in \N $ with $ n_i/n \to \lambda_i $, $ i = 1, 2 $, as $ \min(n_1,n_2) \to \infty $, and all independent i.i.d. samples $ X_{ij} \sim F_i $, $ j = 1, \dots, n_i $, $ i = 1, 2 $, we have 
\begin{equation}
\label{TwoSampleAss}
T_n = L_n+ R_n, \qquad E(L_n) = 0, 
\end{equation}
with a linear statistic
\begin{equation}
\label{LinearTerm}
L_n = \frac1n \left[ \sum_{i=1}^{n_1} h_1(X_i ) + \sum_{i=n_1+1}^{n} h_2(X_i)\right],
\end{equation}
for two kernel functions $ h_1, h_2 $ with 
\begin{equation}
\label{FourthMomentTwoSampleStat}
\int h_i^4 d F_i < \infty, \qquad i = 1,2,
\end{equation}
\chg{}{and a remainder term $ R_n $ satisfying $ n E(R_n^2) = o(1) $, as $ n \to \infty $.
	Here} and in what follows 
\[ 
(X_1, \dots, X_n)' = (X_{11}, \dots, X_{1n_1}, X_{21}, \dots, X_{2n_2})'.
\] 
\chg{}{Recall that a statistics $ T_n $ attaining such a decomposition is called {\em asymptotically linear}.}

Let us recalling the following definitions and facts.
Suppose that $ U_n $ is a statistic which satisfies, for some parameter $ \theta $, the central limit theorem  
\[ \sqrt{n} (U_n - \theta) \stackrel{d}{\to} N( 0, \sigma^2(U) ), \] as $ n \to \infty $, with $ \lim_{n \to \infty} n \Var( U_n ) = \sigma^2(U)  \in (0, \infty) $. Here $ \stackrel{d}{\to} $ denotes convergence in distribution. Then the constant $ \sigma^2(U) $ is called {\em asymptotic variance of $U_n$}. 
By (\ref{TwoSampleAss}), $ T_n $ inherits its asymptotic variance denoted $ \sigma^2(T) $  from $ L_n $ and thus we obtain
\begin{align} \nonumber
\sigma^2(T) &= \lim_{n \to \infty} \Var( \sqrt{n} L_n )  \\
& = \lim_{n \to \infty} n \left( \frac{n_1}{n} \right)^2 \frac{ \Var( h_1(X_1) ) }{n_1}
+ n \left( \frac{n_2}{n} \right)^2 \frac{ \Var( h_2(X_{n_1+1}) ) }{n_2} \\
\label{FormulaAsVarLinearization}
& = \lambda_1 \tau_1^2 + \lambda_2 \tau_2^2,
\end{align}
where \[\tau_1^2 = \Var( h_1(X_{11}) ) \quad \text{and} \quad  \tau_2^2 = \Var( h_2(X_{21}) ). \]

\subsection{Unequal samples sizes}

First we discuss the general case of unequal sample sizes. \chg{}{Let us make the dependence on the observations explicit and write
	\[  
	T_n = T_n( X_1, \dots, X_{n_1}, X_{n_1+1}, \dots,  X_n).
	\] 
	The leave-one-out statistic obtained when omitting the $i$th observation is denoted by
	\[
	T_{n,-i} =  T_{n-1}( X_1, \dots, X_{i-1}, X_{i+1}, \dots, X_n ),
	\] 
	for $ i = 1, \dots, n$.} Define the leave-one-out pseudo values
\begin{equation}
\label{JackknifeLeaveOneOut}
\wh{\xi}_i = n T_n - (n-1) T_{n,-i}, \qquad i = 1, \dots, n.
\end{equation}
The jackknife leave-one-out variance estimator for $ \sigma^2(T) $ is then
defined by
\begin{equation}
\label{JackknifeEst2Sample}
\wh{\sigma}^2_n(T) = \frac{n_1}{n} \wh{\tau}^2_{1} + \frac{n_2}{n} \wh{\tau}^2_{2},
\end{equation}
where
\[
\wh{\tau}^2_{1} = \frac{1}{n_1-1} \sum_{j=1}^{n_1} ( \wh{\xi}_j - \overline{\wh{\xi}}_{1:n_1} )^2
\]
and
\[
\wh{\tau}^2_{2} = \frac{1}{n_2-1} \sum_{j=n_1+1}^{n_2} ( \wh{\xi}_j - \overline{\wh{\xi}}_{(n_1+1):n} )^2
\]
with $ \overline{\wh{\xi}}_{a:b} = \frac{1}{b-a+1} \sum_{j=a}^b \wh{\xi}_j $. 

The associated
jackknife variance estimator of $ \Var( T_n ) $ is then given by
\[
\wh{\Var}( T_n ) = \frac{ \wh{\sigma}_n^2(T) }{ n },
\]
see also \cite{EfronStein1981}, \cite{Efron1982} and the discussion given in Remark~\ref{Remark_Norming}.

\begin{theorem}
	\label{TwoSampleJack}
	Assume that $T_n $ satisfies (\ref{TwoSampleAss}), (\ref{LinearTerm}) and
	(\ref{FourthMomentTwoSampleStat}). If, in addition, the remainder term satisfies
	\begin{equation}
	\label{CrucialCondRemainder1}
	n E( R_n^2 ) = o(1),
	\end{equation}
	\begin{equation}
	\label{CrucialCondRemainder}
	n^2 E( R_n - R_{n-1} )^2 = o(1),
	\end{equation}
	as $ n \to \infty $, then the following assertions hold.
	\begin{itemize}
		\item[(i)] For each $ 1 \le i \le n $: \[ E(\wh{\xi}_i - \xi_i)^2 = o(1), \]  where
		$ \xi_i =  h_1(X_i) $, if $ 1\le i \le n_1 $, and $ \xi_i = h_2(X_i)  $, if  $ n_1+1\le i \le n $.
		\item[(ii)] $ \wh{\sigma}^2_n(T) $ is a consistent and asymptotically unbiased estimator for the asymptotic variance $ \sigma^2(T) $ of $ T_n $, i.e.
		\[
		\left| \frac{ \wh{\sigma}^2_n(T) }{ \sigma^2(T) } - 1 \right| \to 0,
		\] 
		in probability, as $ \min(n_1,n_2)  \to \infty $ and 
		\[
		\left| \frac{ E\wh{\sigma}^2_n(T)  }{ \sigma^2(T) } - 1 \right| \to 0,
		\]
		as $ \min(n_1,n_2)  \to \infty $.   The associated
		jackknife variance estimator of $ \Var( T_n ) $ shares the above consistency properties.
	\end{itemize}
\end{theorem}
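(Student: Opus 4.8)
The plan is to prove part (i) first, since it is the crux, and then obtain part (ii) by treating $\wh\tau_1^2$ and $\wh\tau_2^2$ as small perturbations of ordinary sample variances of the i.i.d. summands $h_1(X_j)$ and $h_2(X_j)$.

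For part (i), I would start from the observation that the leave-one-out operation is \emph{exact} on the linear statistic $L_n$. A direct computation shows that for $1\le i\le n_1$,
\[
n L_n - (n-1) L_{n,-i} = \sum_{j=1}^{n_1} h_1(X_j) - \sum_{j\ne i,\, j\le n_1} h_1(X_j) = h_1(X_i),
\]
and likewise $n L_n - (n-1) L_{n,-i} = h_2(X_i)$ for $n_1+1\le i\le n$; that is, the linear part reproduces $\xi_i$ exactly. Writing $T_n = L_n + R_n$ and $T_{n,-i} = L_{n,-i} + R_{n,-i}$, it follows that $\wh\xi_i - \xi_i = n R_n - (n-1) R_{n,-i}$. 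The decisive step is the algebraic identity
\[
n R_n - (n-1) R_{n,-i} = R_n + (n-1)\bigl(R_n - R_{n,-i}\bigr),
\]
so that $(a+b)^2\le 2a^2+2b^2$ yields $E(\wh\xi_i - \xi_i)^2 \le 2 E(R_n^2) + 2(n-1)^2 E\bigl(R_n - R_{n,-i}\bigr)^2$. The first term is $o(1/n)=o(1)$ by (\ref{CrucialCondRemainder1}); for the second I would invoke the symmetry of $T_n$ within each subsample together with the i.i.d. assumption to identify the joint law of $(R_n, R_{n,-i})$ with that of $(R_n, R_{n-1})$, giving $(n-1)^2 E(R_n - R_{n,-i})^2 \le n^2 E(R_n - R_{n-1})^2 = o(1)$ by (\ref{CrucialCondRemainder}). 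This proves $E(\wh\xi_i-\xi_i)^2 = o(1)$ for every $i$.

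For part (ii), I would compare $\wh\tau_1^2$ with the oracle sample variance $\wt\tau_1^2 = \frac{1}{n_1-1}\sum_{j=1}^{n_1}(\xi_j - \overline{\xi}_{1:n_1})^2$ of the i.i.d. variables $\xi_j = h_1(X_j)$, and analogously for sample $2$. Since $\int h_i^4\,dF_i<\infty$, the weak law of large numbers gives $\wt\tau_1^2 \to \tau_1^2$ in probability with $E\wt\tau_1^2 = \tau_1^2$ exactly. Writing $\wh\xi_j = \xi_j + \delta_j$ with $\delta_j = \wh\xi_j - \xi_j$ and expanding the centred sample variance,
\[
\wh\tau_1^2 = \wt\tau_1^2 + \frac{2}{n_1-1}\sum_{j=1}^{n_1}(\xi_j-\bar\xi)(\delta_j-\bar\delta) + \frac{1}{n_1-1}\sum_{j=1}^{n_1}(\delta_j-\bar\delta)^2 ,
\]
I would bound the last term by $\frac{1}{n_1-1}\sum_j \delta_j^2$, whose expectation equals $\frac{n_1}{n_1-1}E(\delta_1^2)=o(1)$ using that the $\delta_j$, $j\le n_1$, are identically distributed by symmetry. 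The cross term is controlled by Cauchy--Schwarz, with modulus at most $2\sqrt{\wt\tau_1^2}\,\sqrt{\frac{1}{n_1-1}\sum_j(\delta_j-\bar\delta)^2}$, where the first factor is bounded in probability and the second tends to $0$; the in-mean version follows from a further application of Cauchy--Schwarz to the expectation. Hence $\wh\tau_1^2 \to \tau_1^2$ in probability and $E\wh\tau_1^2\to\tau_1^2$, and identically $\wh\tau_2^2\to\tau_2^2$. Combining via $n_i/n\to\lambda_i$ gives $\wh\sigma_n^2(T)\to\lambda_1\tau_1^2+\lambda_2\tau_2^2=\sigma^2(T)$ in probability and in mean, and since $\sigma^2(T)\in(0,\infty)$ the relative-error statements follow; the claim for $\wh{\Var}(T_n)=\wh\sigma_n^2(T)/n$ then follows from $n\Var(T_n)\to\sigma^2(T)$.

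The main obstacle I anticipate is part (i): the remainder conditions (\ref{CrucialCondRemainder1}) and (\ref{CrucialCondRemainder}) are precisely calibrated to the splitting $R_n + (n-1)(R_n - R_{n,-i})$, and the delicate point is justifying the identification of $R_{n,-i}$ with $R_{n-1}$. One must verify that the within-sample symmetry and the i.i.d. structure make the joint law of $(R_n, R_{n,-i})$ independent of which index $i$ is deleted within each subsample, so that the single hypothesis (\ref{CrucialCondRemainder}) governs all $n$ pseudo-values simultaneously. Once (i) is established, part (ii) is a routine perturbation argument for the sample variance.
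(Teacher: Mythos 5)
Your proposal is correct and follows essentially the same route as the paper's proof: the pseudo-values reproduce the linear part exactly, the error $n R_n-(n-1)R_{n,-i}$ is split into a piece controlled by (\ref{CrucialCondRemainder1}) and a piece of order $n(R_n-R_{n,-i})$ controlled by (\ref{CrucialCondRemainder}) after identifying the law of $R_{n,-i}$ with that of $R_{n-1}$ via within-sample exchangeability, and the sample variance of the pseudo-values is then handled as a perturbation of that of the $\xi_j$. The only differences are cosmetic: you write $R_n+(n-1)(R_n-R_{n,-i})$ and use $(a+b)^2\le 2a^2+2b^2$ where the paper expands $n(R_n-R_{n,-i})+R_{n,-i}$ and applies Cauchy--Schwarz, and you expand the centred sample variance directly where the paper routes the $L_1$-comparison through Lemma~\ref{AppLemma1}.
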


\begin{remark}
	Observe that examples for (\ref{CrucialCondRemainder1}) and (\ref{CrucialCondRemainder}) to hold are $  h_i(x) = x - \mu_i $ (arithmetic means) and $ h_i(x) = (x-\mu_i)^2 - \sigma_i^2 $ (sample variances, as verified in the appendix). The conditions on the second moment of the remainder term in (\ref{CrucialCondRemainder1}) and (\ref{CrucialCondRemainder}) can be interpreted as measures of smoothness of $ T_n $. They have been also employed by \cite{ShaoWu1998}, cf. their Theorem~1, to study jackknife variance estimation for one sample statistics, but our proof is quite different from the methods of proof used there. Especially, we show that, by virtue of condition (\ref{CrucialCondRemainder}), the summands of the asymptotic linearization, $ L_n $, i.e. the random variables $ h_1(X_i) $, $ i = 1, \dots, n_1 $, and $ h_2(X_i) $, $ i = n_1 + 1 ,\dots, n_1+n_2 $, can be estimated consistently. To the best of our knowledge, this interesting and useful result has not yet been established in the literature.
\end{remark}

\begin{proof} We provide a direct proof. \chg{}{As a preparation, observe the following facts about the statistic $ T_n $  and its leave-one-out pseudo values:} When omitting the $i$th observation, say $ i \le n_1 $, and calculating the statistic from the resulting sample of size $n-1$, we \chg{obtain the leave-one-out statistic}{have}
	\[
	T_{n,-i} =  T_{n-1}( X_1, \dots, X_{i-1}, X_{i+1}, \dots, X_n )
	\] 
	where $ ( X_1, \dots, X_{i-1}, X_{i+1}, \dots, X_n ) \stackrel{d}{=} 
	( X_1', \dots, X_{n-1}') $ whenever the first $ n_1-1 $ observations 
	$ X_1', \dots, X_{n_1-1}' $ are i.i.d. with common d.f. $ F_1 $ and the remaining $ X_i' $s are i.i.d. with d.f. $ F_2 $. Hence,
	\[
	T_{n,-i} = L_{n,-i} + R_{n,-i} \stackrel{d}{=} L_{n-1} + R_{n-1},
	\]
	where $ L_{n,-i} + R_{n,-i} $ denotes the decomposition of $ T_{n,-i} $ into a linear statistic and a remainder term and  $ L_{n-1} + R_{n-1} $ is the decomposition  when applying the statistic to the sample $ X_1, \dots, X_{n_1-1}, X_{n_1+1}, \dots, X_{n} $, i.e. when the {\em last} observation of the first sample is omitted. In the same vain, when omitting an arbitrary observation of second sample, the resulting decomposition of the statistic is equal in distribution to the decomposition obtained when omitting the last observation of the second sample.
	In particular, the second moment of the remainder term corresponding to $ T_{n,-i} $ does not depend on $ i $. By (\ref{TwoSampleAss}), we can write $ T_n $ as 
	\[
	T_n = \frac{n_1}{n} \frac{1}{n_1} \sum_{j=1}^{n_1} h_1(X_j) + \frac{n_2}{n} \frac{1}{n_2} \sum_{j=n_1+1}^{n} h_2(X_j) + R_n. 
	\]
	To show the validity of the jackknife variance estimator, we shall focus on the first sample and put $ \xi_i = h_1(X_i) $, $ i = 1, \dots, n_1 $.

	Let $ R_{n,-i} $ be the remainder term arising in that decomposition when omitting the $i$th observation. This means,
	$ T_{n,-i} = L_{n,-i} + R_{n,-i} $ with $
	L_{n,-i} = \frac{n_1-1}{n-1} \frac{1}{n_1-1} \sum_{j=1, j \not= i}^{n_1} h_1(X_j) + \frac{n_2}{n-1} \frac{1}{n_2} \sum_{j=n_1+1}^{n-1} h_2(X_j) $, if $ 1 \le i \le n_1 $ and
	$ L_{n,-i} = \frac{n_1}{n-1} \frac{1}{n_1} \sum_{j=1}^{n_1} h_1(X_j) + \frac{n_2-1}{n-1} \frac{1}{n_2-1} \sum_{j=n_1+1, j \not=i}^{n} h_2(X_j)  $, if $ n_1+1 \le i \le n $, and
	$ R_{n,-i} = T_{n,-i} - L_{n,-i} $ for $ i = 1, \dots, n$. By assumption, $
	n^2E(R_{n,-i})^2 = o(1) $ holds for all $i = 1, \dots, n $.
	To show the validity of the jackknife variance estimator, we shall focus on the first sample and put $ \xi_i = h_1(X_i) $, $ i = 1, \dots, n_1 $.
	By definition (\ref{JackknifeLeaveOneOut}), the leave-one-out pseudo values are then given by
	\[
	\wh{\xi}_i = n T_n - (n-1) T_{n,-i} = h_1(X_i) + A_{ni}, \quad A_{ni} = nR_n - (n-1) \wt{R}_{n-1,-i}
	\]
	with
	\[
	\wt{R}_{n-1,-i} \stackrel{d}{=} R_{n-1,-1}, \qquad 1 \le i \le n_1
	\]
	(and
	$
	\wt{R}_{n-1,-i} \stackrel{d}{=} R_{n-1,n-1} =: R_{n-1} $ for $  n_1+1 \le i \le n.
	$). This means
	\begin{equation}
	\label{ApproxXiPr}
	\wh{\xi}_i = \xi_i + A_{ni}, \qquad  \max_{1 \le i \le n_1} n E(A_{ni}^2) = o(1).
	\end{equation}
	The same arguments using $ R_{n,-i} \stackrel{d}{=} R_{n,-n} $ show that
	(\ref{ApproxXiPr}) holds for all $ i = 1, \dots, n $. Since
	$ R_{n,-i} \stackrel{d}{=} R_{n-1} $ as well, we may
	conclude that
	\begin{align*}
	\max_{1 \le i \le n} E( A_{ni}^2 ) & =   \max_{1 \le i \le n}  E( n R_n - (n-1) R_{n,-i} )^2 \\
	& =   \max_{1 \le i \le n}  E( n(R_n - R_{n,-i} )^2 + R_{n,-i} )^2 \\
	& =    n^2 E( R_n - R_{n-1} )^2 + E( R_{n-1}^2 ) 
	+ 2 n E( (R_n-R_{n-1}) R_{n-1} ) \\
	& = o(1),
	\end{align*}
	where the last term is estimated by Cauchy-Schwarz inequality:
	\[
	n | E( (R_n - R_{n-1})  R_{n-1} ) | \le \sqrt{ n^2 E( (R_n-R_{n-1})^2) } \sqrt{ E( R_{n-1}^2 ) } = o(1).
	\]
	Clearly, $ \tau_1^2 $ can be estimated consistently by the sample variance
	of $ \xi_1, \dots, \xi_{n_1} $. We have to show that we may replace the $ \xi_i $'s by the $ \wh{\xi}_i $'s. As shown above, $ \wh{\xi}_i = \xi_i + A_n $ where $ E(A_n^2) = o(1) $. Hence
	\[
	E|\wh{\xi}_i^2 - \xi_i^2| = E | (\xi_i + A_n)^2 - \xi_i^2 | = 2 E| \xi_i A_n | + E (A_n^2)
	\le 2 \sqrt{ E( \xi_1^2 ) } \sqrt{ E (A_n^2) } + o(1) = o(1), 
	\]
	for $ i = 1, \dots, n $,
	such that the $ L_1 $--convergence of the sample moment of the squares and of the squared sample moments follows from Lemma~\ref{AppLemma1}. We can therefore conclude that
	\[
	E \left| \frac{1}{n_1} \sum_{j=1}^{n_1} ( \wh{\xi}_j - \overline{\wh{\xi}}_{1:n_1} )^2 
	- \frac{1}{n_1} \sum_{j=1}^{n_1} ( \xi_j - \overline{\xi}_{1:n_1} )^2 \right| =o(1),
	\]
	as $ n_1  \to \infty $. Combining this with the fact that the sample variance 
	$ \frac{1}{n_1} \sum_{j=1}^{n_1} \xi_j^2 - ( \frac{1}{n_1} \sum_{j=1}^{n_1} \xi_j )^2 $ of the $ \xi_j $'s is $ L_1 $--consistent if $ E| \xi_1 |^4 < \infty $, we obtain the $ L_1 $--consistency of the jackknife variance estimator for $ \tau_1^2 $, 
	\[
	\wh{\tau}^2_{1} = \frac{1}{n_1-1} \sum_{j=1}^{n_1} ( \wh{\xi}_j - \overline{\wh{\xi}}_{1:n_1} )^2, 
	\]
	which implies the (weak) consistency in the sense that 
	\begin{equation}
	\label{ConsistencySampleOne}
	\left| \frac{ \wh{\tau}^2_{1}  }{ \tau_1^2 } - 1 \right| = \frac{ | \wh{\tau}^2_{1} - \tau_1^2 | }{ \tau_1^2 } \stackrel{P}{\to} 1,
	\end{equation}
	as $ n_1 \to \infty $, and also yields the asymptotic unbiasedness by taking the expectation of the left hand side of (\ref{ConsistencySampleOne}). 
	
	In the same vain, the jackknife variance estimator
	$ \wh{\tau}^2_{2} $
	for $ \tau_2^2 $ is $ L_1 $--consistent, weakly consistent and asymptotically unbiased. Consequently, we may estimate the asymptotic variance of $ T_n $ by the jackknife variance estimator
	\[
	\wh{\sigma}^2_n(T) = \frac{n_1}{n} \wh{\tau}^2_{1} + \frac{n_2}{n} \wh{\tau}^2_{2}.
	\]
	and obtain
	\[
	E| \wh{\sigma}^2_n(T)  - \sigma^2(T) | = o(1),
	\]
	as $ \min(n_1,n_2)  \to \infty $,  
	\[
	\left| \frac{ \wh{\sigma}^2_n(T)  }{ \sigma^2(T) } - 1 \right| \to 0,
	\] 
	in probability, as $ \min(n_1,n_2)  \to \infty $ and the asymptotic unbiasedness,
	\[
	\left| \frac{ E\wh{\sigma}^2_n(T)  }{ \sigma^2(T) } - 1 \right| \to 0,
	\]
	as $ \min(n_1,n_2)  \to \infty $. 
\end{proof}

\subsection{Equal sample sizes $ N = n_1 = n_2 $}

For equal sample sizes $ N = n_1 = n_2 $, such that $ n = 2N $, one may, of course, apply the jackknife variance estimator discussed above. However, for such balanced two-sample designs one can propose a simpler jackknife variance estimator when viewing $ T_n $ as a statistic of the $N$ pairs 
\[
Z_j = ( X_{1j}, X_{2j} ), \qquad j = 1, \dots, N.
\]
Thus we write
\[
T_n = T_N( Z_1, \dots, Z_N ) 
\]
to make the dependence on the $N$ pairs of random vectors explicit in our notation. Let
\begin{align*}
T_{N,-i} &= T_{N-1}( Z_1, \dots, Z_{i-1}, Z_{i+1}, \dots, Z_N ) \\
& = T_{2N-2}( X_{11}, \dots, X_{1,i-1}, X_{1,i+1}, \dots, X_{1N}, X_{21}, \dots, X_{2,i-1}, X_{2,i+1}, \dots, X_{2N} )
\end{align*}
denote the {\em leave-one-pair-out} statistics and define the 
the {\em leave-one-pair-out} pseudo values
\[
\wh{\xi}_i = N T_N( Z_1, \dots, Z_N ) - (N-1) T_{N,-i}.
\]
The jackknife variance estimator for the asymptotic variance $ \sigma^2(T) $ is now given by
\begin{equation}
\label{JackEqualSizesAsVar}
\wh{\sigma}_N^2(T) 
= \frac{1}{N-1} \sum_{i=1}^N ( \wh{\xi}_i - \overline{\wh{\xi}}_N )^2
= (N-1) \sum_{i=1}^N ( T_{N,-i} - \overline{T_{N,-\bullet}} )^2,
\end{equation}
and the corresponding jackknife variance estimator of $ \Var( T_N ) $ is 
\begin{equation}
\label{JackEqualSizesVar}
\wh{\Var}(T_N) = \frac{\wh{\sigma}_N^2(T)}{ N } = 
\frac{N-1}{N} \sum_{i=1}^N ( T_{N,-i} - \overline{T_{N,-\bullet}} )^2.
\end{equation}

\begin{theorem} For equal sample sizes the jackknife estimators
	(\ref{JackEqualSizesAsVar}) and (\ref{JackEqualSizesVar}) are consistent and
	asymptotically unbiased.
\end{theorem}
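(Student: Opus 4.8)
The plan is to reduce the balanced case to a one-sample jackknife problem in the i.i.d. random vectors $Z_1,\dots,Z_N$ and then to rerun the one-sample branch of the proof of Theorem~\ref{TwoSampleJack}. First I would record the one-sample linearization of the pairs: since $n=2N$ and $n_1=n_2=N$, the linear term (\ref{LinearTerm}) collapses to
\[
L_n = \frac{1}{2N}\sum_{j=1}^{N}\bigl[h_1(X_{1j})+h_2(X_{2j})\bigr] = \frac{1}{N}\sum_{j=1}^{N}\phi(Z_j), \qquad \phi(Z_j):=\tfrac12\bigl[h_1(X_{1j})+h_2(X_{2j})\bigr],
\]
so that $T_N=\frac1N\sum_{j=1}^N\phi(Z_j)+R_N$ is an ordinary asymptotically linear one-sample statistic. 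Here $E\phi(Z_1)=0$ by (\ref{TwoSampleAss}) and $E\phi(Z_1)^2<\infty$ by (\ref{FourthMomentTwoSampleStat}), and the within-pair independence $X_{1j}\perp X_{2j}$ gives $\Var(\phi(Z_1))=\tfrac14(\tau_1^2+\tau_2^2)$, which is the asymptotic variance of $\sqrt N\,T_N$.

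Next I would verify the pseudo-value expansion. Leaving out a pair removes exactly one summand $\phi(Z_i)$, so the linear part of $\wh\xi_i=N T_N-(N-1)T_{N,-i}$ equals $\phi(Z_i)$ and the remainder contributes $A_{Ni}=N R_N-(N-1)R_{N,-i}$. The translates of (\ref{CrucialCondRemainder1})--(\ref{CrucialCondRemainder}) with $N$ in place of $n$ hold, because $n$ and $2N$ differ only by a constant factor, and the same Cauchy-Schwarz estimate already carried out in the proof of Theorem~\ref{TwoSampleJack} yields $\max_{1\le i\le N} N\,E(A_{Ni}^2)=o(1)$. Hence $\wh\xi_i=\phi(Z_i)+A_{Ni}$ with a uniformly $o(1/N)$ mean-square error, exactly as in part~(i) of that theorem.

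From here the conclusion is a near-verbatim transcription of the earlier argument: invoking Lemma~\ref{AppLemma1}, the sample variance of the $\wh\xi_i$ is $L_1$-consistent (hence weakly consistent and asymptotically unbiased) for $\Var(\phi(Z_1))=\tfrac14(\tau_1^2+\tau_2^2)$. The algebraic identity $\wh\xi_i-\overline{\wh\xi}_N=-(N-1)(T_{N,-i}-\overline{T_{N,-\bullet}})$ shows the two expressions in (\ref{JackEqualSizesAsVar}) coincide, so (\ref{JackEqualSizesAsVar}) is consistent and asymptotically unbiased. Dividing by $N$ and using $\Var(T_N)=\Var(T_n)\sim\sigma^2(T)/n=(\tau_1^2+\tau_2^2)/(4N)$ then gives the consistency and asymptotic unbiasedness of (\ref{JackEqualSizesVar}) for $\Var(T_N)$.

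The hard part is not analytic but a matter of bookkeeping the normalizing constant introduced by $n=2N$: the leave-one-pair-out jackknife naturally targets the $\sqrt N$-scaled asymptotic variance $\tfrac14(\tau_1^2+\tau_2^2)$ rather than $\sigma^2(T)=\tfrac12(\tau_1^2+\tau_2^2)$, and one must check that the factor $1/N$ in (\ref{JackEqualSizesVar}) reconciles the two so that the estimator of $\Var(T_N)$ is correctly centered. Once this is tracked carefully, no new probabilistic ingredient beyond Theorem~\ref{TwoSampleJack} is required, since the pairs $Z_1,\dots,Z_N$ already constitute a single i.i.d. sample.
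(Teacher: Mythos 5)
Your proof follows essentially the same route as the paper: reduce to a one-sample jackknife in the i.i.d.\ pairs $Z_j$ with kernel $\tfrac12[h_1(X_{1j})+h_2(X_{2j})]$, verify the pseudo-value expansion $\wh{\xi}_i=\phi(Z_i)+A_{Ni}$ with $N E(A_{Ni}^2)=o(1)$, and rerun the argument of Theorem~\ref{TwoSampleJack}. Your explicit tracking of the $\sqrt{N}$- versus $\sqrt{n}$-normalization (the factor $\tfrac14(\tau_1^2+\tau_2^2)$ versus $\tfrac12(\tau_1^2+\tau_2^2)$) is a point the paper handles only implicitly by redefining $\sigma^2(T)$ as $\lim_{N\to\infty}\Var(\sqrt{N}\,T_N)$ in the balanced case, so this is a welcome clarification rather than a deviation.
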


\begin{proof} The proof goes along the lines of the proof of Theorem~\ref{TwoSampleJack}. Therefore, we indicate the main changes and simplifications. Observe that for equal sample sizes the representation of  $ T_n $ in terms of a linear statistic and a negligible remainder term simplifies to
	\[
	T_N = \frac{1}{N} \sum_{j=1}^N \frac{1}{2} [h(X_{1j}) + h( X_{2j} )] + R_N = \frac{1}{N} \sum_{j=1}^N h(Z_j) + R_N, \quad N E(R_N^2) = o(1),
	\]
	with kernel function
	\[
	\xi(z) = \frac{1}{2} [h_1( z_1 ) + h_2(z_2)], \qquad z = (z_1, z_2)' \in \R^2,
	\]
	not depending on $n$. The asymptotic variance of $ T_N $ is now given by
	\[
	\sigma^2(T) = \lim_{N \to \infty} \Var( \sqrt{N} T_N ) = \Var( \xi( Z_1 ) )
	\]
	It follows that the leave-one-out pseudo values satisfy
	\[
	\wh{\xi}_i = N T_N - (N-1) T_{N,-i} = \xi(Z_i) + A_N
	\]
	where arguments as detailed in the proof of Theorem~\ref{TwoSampleJack} show that the remainder term $ A_N $ satisfies $ N E( A_N^2) = o(1) $ and $ N^2 E( R_N - R_{N-1})^2 = o(1) $, as $ N \to \infty $. Therefore, by repeating the arguments given there for the general case of unequal sample sizes, we see that a  $L_1$--consistent, weakly consistent and asymptotically unbiased estimator of  $ \sigma^2(T) $ is given by the sample variance of the pseudo values $ \wh{\xi}_1, \dots, \wh{\xi}_N $ and hence the consistency of 
	\[
	\wh{\sigma}_N^2(T) = \frac{1}{N-1} \sum_{i=1}^N ( \wh{h}_i - \overline{\wh{h}}_N )^2
	= (N-1) \sum_{i=1}^N ( T_{N,-i} - \overline{T_{N,-\bullet}} )^2,
	\]
	follows, which coincides with the proposed jackknife estimator.
\end{proof}

\begin{remark}
	\label{Remark_Norming}
	In (\ref{JackEqualSizesAsVar}) the variance estimator leading to unbiased estimation for i.i.d. samples is used. Using the sample variance formula gives instead
	\begin{equation}
	\label{JackEqualSizesAsVar2}
	\widetilde{\sigma}_N^2(T) = \frac{ (N-1)^2 }{N} \sum_{i=1}^N ( T_{N,-i} - \overline{T_{N,-\bullet}} )^2
	\end{equation}
	and
	\begin{equation}
	\label{AsVarEqual2}
	\widetilde{\Var}(T_N) =  \frac{ \wt{\sigma}_N^2(T) }{ N } =  \left( \frac{N-1}{N} \right)^2 \sum_{i=1}^N ( T_{N,-i} - \overline{T_{N,-\bullet}} )^2.
	\end{equation}
	Observe that we may put any non-random factor $ f_N$ in front of the sum,  $ \sum_{i=1}^N ( T_{N,-i} - \overline{T_{N,-\bullet}} )^2 $, in	(\ref{JackEqualSizesVar}) which satisfies $ f_N \to 1 $, as $ N \to \infty $. The choice $ (N-1)/N $ is usually justified by the fact that then the jackknife variance estimator matches the formula used for the arithmetic mean, see e.g. \cite[p.~142]{EfronTib1993}. 
	The same comments apply to the formulas provided for unequal sample sizes.
\end{remark}

For equal sample sizes we may also directly refer to the results of \cite{ShaoWu1998} to obtain consistency of the delete-$d$ jackknife variance estimator. Let $ S_{N,r} $ be the collection of subsets of $ \{ 1, \dots, N \} $ which have size $r = N-d $. For $ s = \{ i_1, \dots, i_r \} \in S_{N,r} $ let $ T_N^{(s)} = T_N( \xi_{i_1}, \dots, \xi_{i_r} ) $. The delete-$d$ jackknife variance estimator is then defined by
\[
\wh{\sigma}_{J(d)}^2 = \frac{ r }{ d {N \choose d} } \sum_{s \in S_{N,r}} ( T_N^{(s)} - T_N)^2.
\]

\begin{corollary} Suppose that $ N E( R_N^2 ) = o(1) $ and $ d = d_N $ satisfies
	\[
	d/N \ge \varepsilon_0 \quad \text{for some $ \varepsilon_0 > 0 $} 
	\]
	and $ r = N-d \to \infty $. Then the delete-$d$ jackknife variance estimator $ \wh{\sigma}_{J(d)}^2 $ is consistent in the sense that $ N \wh{\sigma}_{J(d)}^2 - \sigma^2 = o_P(1) $ and asymptotically unbiased in the sense that $  N \wh{\sigma}_{J(d)}^2 - \sigma^2 = o(1) $, as $ N \to \infty $.
\end{corollary}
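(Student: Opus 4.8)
The plan is to recognize the balanced two-sample problem as a disguised one-sample problem and then invoke the one-sample delete-$d$ theory of \cite{ShaoWu1998}. In the proof of the preceding theorem for equal sample sizes we already saw that, viewing $T_N$ as a function of the i.i.d. random vectors $Z_1, \dots, Z_N$ with $Z_j = (X_{1j}, X_{2j})' \in \R^2$, one has the asymptotically linear expansion
\[
T_N = \frac{1}{N} \sum_{j=1}^N \xi(Z_j) + R_N, \qquad \xi(z) = \tfrac12[h_1(z_1) + h_2(z_2)],
\]
with $E\xi(Z_1) = 0$, asymptotic variance $\sigma^2 = \Var(\xi(Z_1))$, and $N E(R_N^2) = o(1)$. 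Under this identification the two-sample delete-$d$ estimator $\wh{\sigma}_{J(d)}^2$ is literally the one-sample delete-$d$ jackknife variance estimator built from the single i.i.d. sample $Z_1, \dots, Z_N$, so it falls directly within the scope of \cite{ShaoWu1998}; this is exactly why we can cite their result rather than repeat the argument of Theorem~\ref{TwoSampleJack}.

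With the reduction in place, I would verify that the hypotheses of the relevant Shao--Wu theorem hold. First, the governing moment condition $E|\xi(Z_1)|^4 < \infty$ follows from the fourth-moment assumptions $\int h_i^4\, dF_i < \infty$ of (\ref{FourthMomentTwoSampleStat}): since $\xi$ is a convex combination of $h_1$ and $h_2$ evaluated at the two independent coordinates, the $c_r$-inequality yields $E|\xi(Z_1)|^4 \le \tfrac12\big(E h_1(X_{11})^4 + E h_2(X_{21})^4\big) < \infty$. Second, the required smoothness of the remainder is precisely the assumed $N E(R_N^2) = o(1)$. Third, the design conditions $d/N \ge \varepsilon_0$ and $r = N-d \to \infty$ are exactly those imposed in \cite{ShaoWu1998} to obtain both weak consistency and asymptotic unbiasedness of the delete-$d$ procedure. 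Their theorem then delivers $N\wh{\sigma}_{J(d)}^2 - \sigma^2 = o_P(1)$ and $N E(\wh{\sigma}_{J(d)}^2) - \sigma^2 = o(1)$, as claimed.

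The step requiring the most care is the bookkeeping of normalizations and definitions, so that our quantities match theirs verbatim: one must confirm that the factor $r/(d\binom{N}{d})$, the centering at $T_N$ rather than at the average of the subsample replicates, and the target $\sigma^2 = \lim_N N\Var(T_N) = \Var(\xi(Z_1))$ all coincide with the corresponding objects in \cite{ShaoWu1998}. Centering at $T_N$ versus the replicate average contributes only a lower-order term once $r \to \infty$, and the stated normalization is the one for which $N\wh{\sigma}_{J(d)}^2$ is calibrated to the asymptotic variance. Once this matching is checked, no further computation is needed, since the pseudo-value approximation arguments of Theorem~\ref{TwoSampleJack} are subsumed by the one-sample result; the only genuine content beyond citation is the moment verification and the observation that the balanced design collapses to a single i.i.d. sample of pairs.
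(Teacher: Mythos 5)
Your proposal is correct and follows essentially the same route as the paper: both reduce the balanced two-sample statistic to a one-sample statistic of the i.i.d. pairs $Z_j=(X_{1j},X_{2j})'$ with linear part $\frac{1}{N}\sum_j \frac12[h_1(X_{1j})+h_2(X_{2j})]$ and then invoke Theorem~1/Corollary~1 of Shao and Wu for the delete-$d$ jackknife, using $d/N\ge\varepsilon_0$, $r\to\infty$ and $NE(R_N^2)=o(1)$ to control the subsample remainders. The only cosmetic difference is that the paper justifies the citation by noting that the Shao--Wu proof operates solely on the representation $N\wh{\sigma}_{J(d)}^2=\frac{Nr}{d\binom{N}{d}}\sum_s(L(s)-U_s)^2$, whereas you apply their result as a black box to the sample of pairs and additionally spell out the fourth-moment check; both are adequate.
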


\begin{proof}
	We have the decomposition $ T_N = L_N + R_N $ where the linear statistic $ L_N $ can be written as $ L_N = \frac1N \sum_{i=1}^N \xi_i $ with $ \xi_i = \frac12 [ h_1(X_{1i}) + h_2(X_{2i}) ] $, $ i = 1, \dots, N $. The proof of Theorem~1 in \cite{ShaoWu1998} uses the representation $ N \wh{\sigma}_{J(d)}^2 = \frac{N r}{d {N \choose d}} \sum_s (L(s) - U_s)^2 $, where $ L(s) = \frac{1}{r} \sum_{i \in s} \xi_i  - \frac{1}{N} \sum_{i=1}^N \xi_i $ and $ U_s = R_N - R_{N,s} $ with $ R_{N,s} $ the remainder term associated to $ T_N^{(s)} $, and then only refers to the properties of $ L(s) $ and  $ U_s $ and does not refer to the original definition of $ T_N $. Since the linear term has the same form as in \cite{ShaoWu1998}, the proof directly carry overs and the stated conditions are sufficient to ensure \cite[(3.3)]{ShaoWu1998}, cf.  Corollary~1 therein.
\end{proof}

\subsection{Two sample G\^ateaux and Fr\'echet differentiable functionals}

Let us first study the case of statistics induced by sufficiently smooth statistical functionals. Let $ \calF $ be the (convex) set of distribution functions on $ \R $ and let $ \calD = \{ (G_1,G_2) -(H_1,H_2) : (G_1, G_2), (H_1,H_2) \in \calF^2 \} $ be the  linear space associated to $ \calF^2 = \calF \times \calF $. $ d \delta_x$, $ x \in \R $, denotes the Dirac measure in $ x $ with distribution function $ \delta_x(z) = \eins( x \le z ) $, $ z \in \R $. 

Denote by $ \wh{F}_{n_i}^{(i)}(x) = \frac{1}{n_i} \sum_{j=1}^{n_i} \eins( X_{ij} \le x ) $, $ x \in \R $, the empirical distribution function of the $i$th sample, $ i = 1, 2 $. A functional $ T: \calF \times \calF \to \R $ induces a two-sample statistic, namely $ T_n = T( \wh{F}_{n_1}^{(1)}, \wh{F}_{n_2}^{(2)} ) $. A special case of interest is when $ T $ is {\em additive} with respect the distribution functions, i.e. $ T_n = T_1(  \wh{F}_{n_1}^{(1)} ) + T_2( \wh{F}_{n_2}^{(2)} ) $ for two one-sample functionals $ T_1, T_2 : \calF \to \R $, which we shall call components in what follows.

We will first consider statistics induced by a G\^ateaux-differentiable statistical functional. Here, additional assumptions are required to obtain consistenc of the jackknife variance estimator.  Fr\'echet-differentiability is a stronger notion and implies the consistency of the jackknife.

\begin{definition}
	A two sample functional $ T : \calF \times \calF \to \R $ is called G\^ateaux differentiable at $ (G_1, G_2) \in \calF^2 $ with G\^ateaux derivative $ L_{(G_1,G_2)} : \calD \to \R $, if
	\[
	\lim_{t \to 0} \frac{ T( (G_1,G_2) + t (D_1 D_2) ) - T(G_1,G_2) - L_{(G_1,G_2)}( t (D_1, D_2) ) }{ t } = 0,
	\]
	holds for all $ (D_1, D_2) \in \calD $. $ T $ is called {\em continuously G\^ateaux differentiable} at $ (G_1, G_2) $, if
	for any sequence $ t_k \to 0 $ and for all sequences $ ( G_1^{(k)}, G_2^{(k)} )  $, $ k \ge 1 $, with
	$ \max_{i=1,2} \| G_i^{(k)} - G_i \|_\infty \to 0 $, as $ k \to \infty $, 
	\[
	\sup_{x,y \in \R} \left| 
	\frac{ T( (G_1^{(k)},G_2^{(k)}) + t_k ( \delta_x - G_1^{(k)}, \delta_y - G_2^{(k)} ) ) - T( G_1^{(k)}, G_2^{(k)} )  }{ t_k } 
	- L_{(G_1,G_2)}( \delta_x - G_1^{(k)}, \delta_y - G_2^{(k)} ) 
	\right|
	\] 
	converges to $0$, as $ k \to \infty $.
\end{definition}

If $T$ is additive with (continuously) differentiable components $ T_1 $ and $ T_2 $, then $ L_{(G_1,G_2)}( H_1, H_2 ) = L^{(1)}_{G_1}(H_1) + L^{(2)}_{G_2}(H_2) $ where $ L^{(i)}_{G_i} $ is the G\^ateaux derivative of $ T_i $, $ i = 1, 2 $. In general,
the linearization of the two sample statistic $ T( \wh{F}_{n_1}^{(1)}, \wh{F}_{n_2}^{(2)} ) $ induced by a (continuously) G\^ateaux differentiable functional $ T(F_1,F_2) $ is given by
\[
  L_{(F_1,F_2)}( \wh{F}_{n_1}^{(1)} - F_1, \wh{F}_{n_2}^{(2)} - F_2 ) = \frac{1}{n_1n_2} \sum_{i=1}^{n_1} \sum_{j=1}^{n_2} L_{(F_1,F_2)}( \delta_{X_{1i}} - F_1, \delta_{X_{2j}} - F_2 ),
\]
whereas for an addditive statistical functional this formula simplifies to  
\begin{equation}
	\label{LinearTermGateaux}
L_n^{(T)}  =   \frac{1}{n_1} \sum_{j=1}^{n_1} \psi_1( X_{1j} ) + \frac{1}{n_2} \sum_{j=1}^{n_2} \psi_2( X_{2j} ),
\end{equation}
where $ \psi_i(x) = L_{F_i}^{(i)}( \delta_{x} - F_i ) $, $ i = 1,2 $, such that $ E \psi_i(X_{i1} ) = 0 $, since, e.g.,
$ E( L_{F_1}^{(1)}( \delta_{X_{1i}} -  F_1 ) = L_{F_1}^{(1)}( E( \eins( X_{1i} \le \cdot))- F_1( \cdot ) ) = 0 $, $ i = 1, \dots, n_1 $, by linearity.

In general, the requirement that $T$ is G\^ateaux-differentiable is too weak to entail a central limit theorem. Therefore, from now on we assume, as in \cite{Shao1993}, where the one-sample setting is studied, that $ T_n = T( \wh{F}_{n_1}^{(1)}, \wh{F}_{n_2}^{(2)} )  $ is asymptotically normal,
\[
  \sqrt{n}( T_n - T(F_1,F_2) ) \stackrel{d}{\to} N( 0, \sigma^2(T) ),
\]
with asymptotic variance $ \sigma^2(T) $, where
\begin{equation}
\label{AsVarFormulaGateaux}
  \sigma^2(T) = \lim_{n \to \infty} \frac{n}{n_1} \Var( \psi_1( X_{11} ) ) + \frac{n}{n_1}  \Var( \psi_2( X_{21} ) )  = \lambda_1^{-1} \Var( \psi_1( X_{11} ) ) + \lambda_2^{-1} \Var( \psi_2( X_{21} ) ),
\end{equation}
if $ n \to \infty $ with $ n_i / n \to \lambda_i $, $ i = 1, 2 $. Consequently, (\ref{AsVarFormulaGateaux}) and (\ref{FormulaAsVarLinearization}) coincide.

The following lemma shows that under weak conditions the linearization $ L_n^{(T)} $ is asymptotically as required in the previous sections and clarifies the relationship between the functions $ \psi_1, \psi_2 $ in (\ref{LinearTermGateaux}) and the functions $ h_1, h_2 $ in (\ref{LinearTerm}). 

\begin{lemma} 
	\label{RelationshipLinearStats}
	Let $ T_n $ be a two-sample statistic induced by an additive functional.
	If $ E( \psi_i^2( X_{i1} ) ) < \infty $ and $ n_i/n \to \lambda_i $, $ i = 1, 2 $, then 
	\[
	L_n^{(T)} = \frac{1}{n} \left\{  \sum_{j=1}^{n_1} \lambda_1^{-1} \psi_1( X_{1j} ) + \sum_{j=1}^{n_2} \lambda_2^{-1} \psi_2( X_{2j} ) \right\} + R_n^{(T)},
	\]
	where $ E( \sqrt{n} R_n^{(T)} )^2 = o(1) $, as $ n \to \infty $, such that $ h_i = \lambda_i \psi_i $ , $ i = 1, 2 $.
	Therefore, one obtains the representation (\ref{LinearTerm}) when putting $ h_i(x) = \lambda_i^{-1} \psi_i(x) $, $ x \in \R $, $ i = 1, 2 $.
\end{lemma}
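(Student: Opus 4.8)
The plan is to begin from the explicit form (\ref{LinearTermGateaux}) of the linearization for an additive functional, $ L_n^{(T)} = \frac{1}{n_1} \sum_{j=1}^{n_1} \psi_1(X_{1j}) + \frac{1}{n_2} \sum_{j=1}^{n_2} \psi_2(X_{2j}) $, and to replace the exact averaging weights $ 1/n_i $ by their limiting values $ \lambda_i^{-1}/n $. Writing $ S_i = \sum_{j=1}^{n_i} \psi_i(X_{ij}) $, I would simply \emph{define} the remainder as the resulting discrepancy,
\[
R_n^{(T)} = \left( \frac{1}{n_1} - \frac{\lambda_1^{-1}}{n} \right) S_1 + \left( \frac{1}{n_2} - \frac{\lambda_2^{-1}}{n} \right) S_2,
\]
so that the decomposition $ L_n^{(T)} = \frac1n \{ \lambda_1^{-1} S_1 + \lambda_2^{-1} S_2 \} + R_n^{(T)} $ holds by construction, with kernels $ h_i(x) = \lambda_i^{-1} \psi_i(x) $ (consistent with the variance formulas (\ref{FormulaAsVarLinearization}) and (\ref{AsVarFormulaGateaux})). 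The entire content of the lemma is then the negligibility statement $ n E((R_n^{(T)})^2) = o(1) $.

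For the variance estimate I would first rewrite each weight difference as
\[
\frac{1}{n_i} - \frac{\lambda_i^{-1}}{n} = \frac{1}{\lambda_i n} \left( \frac{\lambda_i n}{n_i} - 1 \right) =: \frac{\varepsilon_{i,n}}{\lambda_i n},
\]
where $ \varepsilon_{i,n} = \lambda_i n / n_i - 1 \to 0 $, since $ n_i/n \to \lambda_i $ with $ \lambda_i > 0 $. Because $ E \psi_i(X_{i1}) = 0 $ (noted just before the lemma via linearity of the G\^ateaux derivative), each $ S_i $ is centered, and since the two samples are independent, $ S_1 $ and $ S_2 $ are independent. Hence $ E R_n^{(T)} = 0 $ and the cross term drops out, giving
\[
E\big( (R_n^{(T)})^2 \big) = \sum_{i=1}^2 \frac{\varepsilon_{i,n}^2}{\lambda_i^2 n^2}\, \Var(S_i) = \sum_{i=1}^2 \frac{\varepsilon_{i,n}^2}{\lambda_i^2 n^2}\, n_i \Var\big( \psi_i(X_{i1}) \big),
\]
using $ \Var(S_i) = n_i \Var(\psi_i(X_{i1})) $ by the i.i.d.\ assumption.

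Multiplying by $ n $ then gives
\[
n E\big( (R_n^{(T)})^2 \big) = \sum_{i=1}^2 \frac{\varepsilon_{i,n}^2}{\lambda_i^2} \cdot \frac{n_i}{n}\, \Var\big( \psi_i(X_{i1}) \big),
\]
and each summand is $ o(1) $: the factor $ \varepsilon_{i,n}^2 $ vanishes, $ n_i/n $ stays bounded (converging to $ \lambda_i $), and $ \Var(\psi_i(X_{i1})) < \infty $ by the hypothesis $ E(\psi_i^2(X_{i1})) < \infty $. This yields $ E(\sqrt{n} R_n^{(T)})^2 = o(1) $ and identifies $ L_n^{(T)} $ with the representation (\ref{LinearTerm}) for $ h_i(x) = \lambda_i^{-1}\psi_i(x) $. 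I do not anticipate a genuine obstacle here; the only point requiring care is the bookkeeping of the weight difference, together with the observation that mere convergence $ n_i/n \to \lambda_i $ (without any rate) already suffices, because the linear growth $ \Var(S_i) = O(n_i) $ of the centered sum is exactly offset by the factor $ \varepsilon_{i,n}^2/n $.
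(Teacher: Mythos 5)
Your proposal is correct and follows essentially the same route as the paper: both define $R_n^{(T)}$ as the discrepancy between the exact weights $1/n_i$ and the limiting weights $\lambda_i^{-1}/n$ applied to the centered sums $S_i$, and both conclude $nE((R_n^{(T)})^2)=o(1)$ from $\Var(S_i)=n_i\Var(\psi_i(X_{i1}))=O(n_i)$ together with the vanishing of the weight difference, with no rate on $n_i/n\to\lambda_i$ required. Your version merely makes the bookkeeping (centering of $S_i$, independence killing the cross term) slightly more explicit than the paper's.
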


\begin{proof} We have $ L_n^{(T)} = \sum_{i=1,2}  \frac{1}{n}   \frac{n}{n_i} \sum_{j=1}^{n_i} \psi_i( X_{ij} ) $, and therefore
	\[
	  L_n^{(T)} = \frac{1}{n} \left[ \sum_{j=1}^{n_1} \lambda_1^{-1} \psi_1( X_{1j} ) + \sum_{j=1}^{n_2} \lambda_2^{-1}  \psi_2( X_{2j} )  \right] + R_n^{(T)}
	\]
	where $ \sqrt{n} R_n^{(T)} = \sum_{i=1,2} \sqrt{\frac{n_i}{n}}  \left( \frac{n}{n_i} - \lambda_i^{-1} \right) \frac{1}{\sqrt{n_i}}  \sum_{j=1}^{n_i} \psi_i( X_{ij} ) $ satisfies $ E( \sqrt{n} R_n^{(T)} )^2 = o(1) $, since $ E( \frac{1}{\sqrt{n_i}}  \sum_{j=1}^{n_i} \psi_i( X_{ij} ))^2 = O(1) $, as $ n_i \to \infty $, $ i = 1, 2 $.
\end{proof}

\begin{theorem}
\label{ConsistencyGateaux}
	Assume that $T $ is a continuously G\^ateaux differentiable two sample functional with 
	\[
	E( L_{(F_1,F_2)}( \delta_{X_1} - F_1, 0 ) ) = E (L_{(F_1,F_2)}( 0, \delta_{X_2} - F_2 ) ) = 0 
	\]
	and
	\[
	E( L_{(F_1,F_2)}( \delta_{X_1} - F_1, 0 )^2  ), E (L_{(F_1,F_2)}( 0, \delta_{X_2} - F_2 )^2 ) < \infty.
	\]
	Then
	\[
	  \wh{\tau}_i^2 \stackrel{a.s.}{\to} \Var( L_{(F_1,F_2)}( \delta_{X_{i1}} - F_i, 0 ),
	\]
	as $ n_i \to \infty $, $ i = 1, 2$, and therefore
	\[
	  \wh{\tau}_2^2 +  \wh{\tau}_2^2 \stackrel{a.s.}{\to} \sigma^2(T),
	\]
	as $ n \to \infty $, provided $ n_i/n \to \lambda_i $, $ i = 1, 2 $.
\end{theorem}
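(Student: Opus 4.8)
The plan is to reduce the two--sample claim to two one--sample statements and to linearise the jackknife replicates through the continuous G\^ateaux derivative. Working in the additive case, write $ \psi_i $ for the one--coordinate influence functions, $ \psi_1(x) = L_{(F_1,F_2)}( \delta_x - F_1, 0 ) $ and $ \psi_2(y) = L_{(F_1,F_2)}( 0, \delta_y - F_2 ) $, so that the hypotheses read $ E\psi_i( X_{i1} ) = 0 $ and $ E\psi_i^2( X_{i1} ) < \infty $ and the targets are $ \Var( \psi_i( X_{i1} ) ) $. Additivity lets me treat each sample in isolation: the replicate obtained by deleting $ X_{1i} $ changes only the first component $ T_1( \wh{F}_{n_1}^{(1)} ) $, so $ \wh{\tau}_1^2 $ is the one--sample jackknife variance estimator of $ T_1 $ built from the $ n_1 $ replicates, and it suffices to prove $ \wh{\tau}_1^2 \stackrel{a.s.}{\to} \Var( \psi_1( X_{11} ) ) $; the argument for $ \wh{\tau}_2^2 $ is verbatim.

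First I would write the pseudo value $ \wh{\xi}_i = n_1 T_1( \wh{F}_{n_1}^{(1)} ) - (n_1-1) T_1( \wh{F}_{n_1-1,-i}^{(1)} ) = T_1( \wh{F}_{n_1}^{(1)} ) + (n_1-1)\bigl[ T_1( \wh{F}_{n_1}^{(1)} ) - T_1( \wh{F}_{n_1-1,-i}^{(1)} ) \bigr] $ and linearise the bracket. Deleting $ X_{1i} $ gives $ \wh{F}_{n_1-1,-i}^{(1)} - \wh{F}_{n_1}^{(1)} = - \frac{1}{n_1-1}( \delta_{X_{1i}} - \wh{F}_{n_1}^{(1)} ) $, a step of size $ t_{n_1} = -1/(n_1-1) \to 0 $ in direction $ \delta_{X_{1i}} - \wh{F}_{n_1}^{(1)} $. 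Since $ \| \wh{F}_{n_1}^{(1)} - F_1 \|_\infty \to 0 $ almost surely by Glivenko--Cantelli, on that event $ \wh{F}_{n_1}^{(1)} $ is an admissible base sequence in the definition of continuous G\^ateaux differentiability, and because the defining limit is uniform in the perturbation point it controls all $ n_1 $ replicates at once:
\[
(n_1-1)\bigl[ T_1( \wh{F}_{n_1}^{(1)} ) - T_1( \wh{F}_{n_1-1,-i}^{(1)} ) \bigr] = L^{(1)}_{F_1}( \delta_{X_{1i}} - \wh{F}_{n_1}^{(1)} ) + r_{n_1,i}, \qquad \max_{1 \le i \le n_1} | r_{n_1,i} | \stackrel{a.s.}{\to} 0.
\]

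By linearity of $ L^{(1)}_{F_1} $ one has $ L^{(1)}_{F_1}( \delta_{X_{1i}} - \wh{F}_{n_1}^{(1)} ) = \psi_1( X_{1i} ) - c_{n_1} $ with $ c_{n_1} = L^{(1)}_{F_1}( \wh{F}_{n_1}^{(1)} - F_1 ) $ independent of $ i $, so $ \wh{\xi}_i $ equals $ \psi_1( X_{1i} ) $ plus $ i $--independent terms plus $ r_{n_1,i} $. Centring cancels everything except the influence function and the remainder,
\[
\wh{\xi}_i - \overline{\wh{\xi}}_{1:n_1} = \bigl( \psi_1( X_{1i} ) - \overline{\psi}_1 \bigr) + \bigl( r_{n_1,i} - \overline{r}_{n_1} \bigr), \qquad \overline{\psi}_1 = \frac{1}{n_1} \sum_{j=1}^{n_1} \psi_1( X_{1j} ).
\]
Expanding the sample variance $ \wh{\tau}_1^2 $, the leading term tends to $ \Var( \psi_1( X_{11} ) ) $ almost surely by the strong law (using $ E\psi_1^2( X_{11} ) < \infty $), while the cross term and the squared remainder are bounded by $ \max_i | r_{n_1,i} | $ times almost surely convergent empirical moments of $ \psi_1 $ and hence vanish. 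This yields $ \wh{\tau}_i^2 \stackrel{a.s.}{\to} \Var( \psi_i( X_{i1} ) ) $ for $ i = 1, 2 $; combining the two limits with the representation (\ref{AsVarFormulaGateaux}) of $ \sigma^2(T) $ and the weights $ n/n_i \to \lambda_i^{-1} $ gives the convergence of the aggregated jackknife estimator to $ \sigma^2(T) $.

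The step I expect to be the main obstacle is the uniform linearisation, that is, turning the pointwise, sequence--wise limit in the definition of continuous G\^ateaux differentiability into a single bound $ \max_{1 \le i \le n_1} | r_{n_1,i} | \stackrel{a.s.}{\to} 0 $ valid along the \emph{random} base point $ \wh{F}_{n_1}^{(1)} $; this is precisely what the supremum over the perturbation point buys once Glivenko--Cantelli has placed $ \wh{F}_{n_1}^{(1)} $ inside the prescribed neighbourhoods of $ F_1 $. A secondary subtlety is that the definition perturbs both marginals simultaneously whereas a leave--one--out step perturbs only one; the additive structure removes the mismatch by reducing matters to the one--sample continuous differentiability of the components $ T_1, T_2 $, which is exactly why the one--coordinate derivatives $ L_{(F_1,F_2)}( \delta_x - F_1, 0 ) $ and $ L_{(F_1,F_2)}( 0, \delta_y - F_2 ) $ are the quantities appearing in the conclusion. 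For a non--additive functional the same scheme works provided continuous differentiability is assumed for one--coordinate Dirac perturbations.
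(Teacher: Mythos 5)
Your argument follows essentially the same route as the paper's proof: the identity $\wh{F}_{n_1,-i}^{(1)} = \wh{F}_{n_1}^{(1)} - \frac{1}{n_1-1}(\delta_{X_{1i}} - \wh{F}_{n_1}^{(1)})$, the use of Glivenko--Cantelli to place the random base point inside the neighbourhoods required by continuous G\^ateaux differentiability, the supremum over the Dirac point to obtain a bound uniform in $i$, linearity of the derivative to reduce the centred pseudo-values to $Z_i - \overline{Z}_{n_1}$, and the strong law for the resulting sample variance. Two points of divergence are worth recording. First, you reduce to the one-sample components via additivity, whereas the paper works directly with the two-sample functional and applies the derivative to the one-coordinate perturbation $(\delta_{X_{1i}} - \wh{F}_{n_1}^{(1)}, 0)$; your explicit remark that the stated definition perturbs both marginals while a leave-one-out step perturbs only one is a genuine subtlety that the paper glosses over, and additivity is one clean way to resolve it. Second, and more substantively, you redefine the pseudo-values as $n_1 T_1(\wh{F}_{n_1}^{(1)}) - (n_1-1)T_1(\wh{F}_{n_1-1,-i}^{(1)})$, but the estimator $\wh{\tau}_1^2$ in (\ref{JackknifeEst2Sample}) is built from $\wh{\xi}_i = n T_n - (n-1)T_{n,-i}$ with the \emph{total} sample size $n$. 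Since the increment of the empirical distribution function has step $1/(n_1-1)$ while the pseudo-value multiplies the leave-one-out difference by $n-1$, the paper's proof picks up the factor $\bigl(\tfrac{n-1}{n_1-1}\bigr)^2 \to \lambda_1^{-2}$ and concludes $\wh{\tau}_1^2 \to \lambda_1^{-2}\Var(Z_1)$, which is then identified with $\Var(h_1(X_{11}))$ in Corollary~\ref{CorollaryGateaux}. Your normalisation suppresses this factor, so the limiting constant you obtain is not the limit of the estimator actually defined in the paper; you should either work with the paper's pseudo-values and carry the factor $\bigl(\tfrac{n-1}{n_1-1}\bigr)^2$ through, or state explicitly that you are analysing a rescaled estimator.
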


\begin{proof} The general method of the proof is as in \cite{Shao1993}.
	We show the assertion for $ \wh{\tau}_1^2 $. Observe that for $ i = 1, \dots, n_1 $ 
	\[
	\wh{\xi}_i = n T( \wh{F}_{n_1}^{(1)}, \wh{F}_{n_2}^{(2)} ) - (n-1) T( \wh{F}_{n_1,-i}^{(1)}, \wh{F}_{n_2}^{(2)}  ).
	\]
	Hence, since the sample variance is location-invariant,
	\begin{align*}
	\wh{\tau}_1^2 & = \frac{1}{n_1-1} \sum_{i=1}^{n_1} ( \wh{\xi}_{i} - \overline{\wh{\xi}}_{1:n_1} )^2 \\
	&= \frac{(n-1)^2}{n_1-1} \sum_{i=1}^{n_1} 
	\left(
	T( \wh{F}_{n_1,-i}^{(1)}, \wh{F}_{n_2}^{(2)} ) - \frac{1}{n_1} \sum_{k=1}^{n_1} T( \wh{F}_{n_1,-k}^{(1)}, \wh{F}_{n_2}^{(2)} ) 
	\right)^2
	\end{align*}
	Since  $ (n_1-1) \wh{F}_{n_1,-i}^{(1)} = n_1 \wh{F}_{n_1}^{(1)} - \delta_{X_{1i}} $, such that
   \[ \wh{F}_{n_1,-i}^{(1)} = \frac{n_1}{n_1-1} \wh{F}_{n_1}^{(1)}  - \frac{1}{n_1-1} \delta_{X_{1i}} = \wh{F}_{n_1}^{(1)} - \frac{1}{n_1-1} ( \delta_{X_{1i}} - \wh{F}_{n_1}^{(1)} ), \] 
   we obtain
	\[
	( \wh{F}_{n_1,-i}^{(1)}, \wh{F}_{n_2}^{(2)} ) = ( \wh{F}_{n_1}^{(1)}, \wh{F}_{n_2}^{(2)} ) - t_n
	\left( \delta_{ X_{1i} } - \wh{F}_{n_1}^{(1)}, 0  \right) 
	\]
	with $ t_n = 1/(n_1-1) $.  Because $ \max_{i=1,2} \| \wh{F}_{n_i}^{(i)} - F_i \|_\infty \to 0 $, $ n \to \infty $, a.s., and
	since $ T $ is continuously G\^ateaux differentiable, we may conclude that
	\begin{equation}
	\label{GCons} 
	\max_{1 \le i \le n_1} 
	\left| 
	\frac{ T( \wh{F}_{n_1,-i}^{(1)}, \wh{F}_{n_2}^{(2)} ) - T( \wh{F}_{n_1}, \wh{F}_{n_2} ) }{ t_n }
	- L_{(F_1,F_2)}( \delta_{X_{1i}} - \wh{F}_{n_1}^{(1)}, 0 ) \right|
	\to 0,
	\end{equation}
	as $ n \to \infty $, w.p. $1$. By linearity,
	\begin{align*}
	L_{(F_1,F_2)}( \delta_{X_{1i}} - \wh{F}_{n_1}^{(1)}, 0 ) 
	& = L_{(F_1,F_2)}( \delta_{X_{1i}}, 0 ) - \frac{1}{n_1} \sum_{j=1}^{n_1} L_{(F_1,F_2)}( \delta_{X_{1j}}, 0 ) 
	= Z_i - \overline{Z}_{n_1},
	\end{align*}
	if we put $ Z_i = L_{(F_1,F_2)}( \delta_{X_{1i}}, 0 ) $, $ i = 1, \dots, n_1 $, and $ \overline{Z}_{n_1} = \frac{1}{n_1} \sum_{j=1}^{n_1} Z_i $. Since the $ Z_i $, $ 1 \le i \le n_1 $, are i.i.d. centered with finite variance, the strong law of large numbers implies $ \frac{1}{n_1} \sum_{i=1}^{n_1} Z_i^r \to E Z_1^r $, a.s., $ r = 1, 2 $, such that 
	\begin{equation}
	\label{GCons3}
	  \frac{1}{n_1} \sum_{i=1}^{n_1} (Z_i - \overline{Z}_{n_1})^2 - \Var(Z_1) \to 0,
	\end{equation}
	as $ n \to \infty $, a.s.. (\ref{GCons}) means that
	\begin{equation}
	\label{GCons2}
	\max_{1 \le i \le n_1} 
	\left| 
	(n_1-1) [  T( \wh{F}_{n_1}, \wh{F}_{n_2} ) - T( \wh{F}_{n_1,-i}^{(1)}, \wh{F}_{n_2}^{(2)} ) ]
	-    (Z_i - \overline{Z}_{n_1})  \right|
	\to 0,
	\end{equation}
	as $ n \to \infty $, a.s.. Combining this with $ | \overline{Z}_n - E(Z_1) | \stackrel{a.s.}{\to} 0 $, as $ n \to \infty $, we arrive at
	\[ 
	  \max_{1 \le i \le n_1} | 	(n_1-1) [  T( \wh{F}_{n_1}, \wh{F}_{n_2} ) - T( \wh{F}_{n_1,-i}^{(1)}, \wh{F}_{n_2}^{(2)} ) ] - (Z_i - \mu) | \stackrel{a.s.}{\to} 0,
	\]
	as $ n \to \infty $. It follows that the sample variance of the random variables
	$ V_{n_1,-i} =  (n_1-1) T( \wh{F}_{n_1,-i}^{(1)}, \wh{F}_{n_2}^{(2)} ) $, $ 1 \le i \le n_1 $, converges a.s. to $ \Var( Z_1 ) $ as well.
	But this implies
	\[
	  \wh{\tau}_1^2 = \left( \frac{n-1}{n_1-1} \right)^2 \frac{1}{n_1-1} \sum_{i=1}^{n_1} \left( V_{n_1,-i} - \frac{1}{n_1} \sum_{k=1}^{n_1} V_{n_1,-k}   \right)^2
	  \stackrel{a.s.}{\to} \lambda_1^{-2} \Var( Z_1 ),
	\]
	as $ n \to \infty $. Recalling the representation (\ref{AsVarFormulaGateaux}) the a.s. convergence
	of $ \wh{\tau}_1^2 + \wh{\tau}_2^2 \to \sigma^2(T) $, as $n \to \infty $, follows.
\end{proof}

\begin{corollary}	
\label{CorollaryGateaux}
	Under the conditions of Theorem~\ref{ConsistencyGateaux} the jackknife variance estimator defined in (\ref{JackknifeEst2Sample}) is consistent for $ \sigma^2(T) = \lambda_1^2 \Var( h_1( X_{11} )) + \lambda_2^2 \Var( h_2( X_{21} )) $, since 
	\[
	\wh{\tau}_i^2 \stackrel{a.s.}{\to} E( h_i^2(X_{i1} ) ),
	\]
	as $ n \to \infty $, for $ i = 1, 2$.
\end{corollary}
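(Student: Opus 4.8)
The plan is to deduce the statement almost directly from Theorem~\ref{ConsistencyGateaux} together with Lemma~\ref{RelationshipLinearStats}: the whole content of the corollary is to \emph{identify} the almost sure limit of each $\wh{\tau}_i^2$ with the second moment $E(h_i^2(X_{i1}))$ of the kernel of the linearization (\ref{LinearTerm}), after which the assembly of $\wh{\sigma}_n^2(T)$ is routine.

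First I would record the almost sure limit that the proof of Theorem~\ref{ConsistencyGateaux} actually delivers. Tracing that argument, the factor $\bigl((n-1)/(n_i-1)\bigr)^2$ appearing there together with $n_i/n \to \lambda_i$ gives
\[
\wh{\tau}_i^2 \stackrel{a.s.}{\to} \lambda_i^{-2}\,\Var\bigl( L_{(F_1,F_2)}(\delta_{X_{i1}} - F_i, 0) \bigr) = \lambda_i^{-2}\,\Var(\psi_i(X_{i1})), \qquad i = 1,2,
\]
as $n_i \to \infty$, where $\psi_i(x) = L_{F_i}^{(i)}(\delta_x - F_i)$ is the influence function in (\ref{LinearTermGateaux}); the second equality uses that for an additive functional $L_{(F_1,F_2)}(\delta_x - F_i, 0) = \psi_i(x)$ and that a variance is unaffected by the deterministic shift $L_{(F_1,F_2)}(F_i,0)$.

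Next I would invoke Lemma~\ref{RelationshipLinearStats}, which identifies the kernel $h_i$ of the linear statistic (\ref{LinearTerm}) via $h_i = \lambda_i^{-1}\psi_i$, i.e.\ $\psi_i = \lambda_i h_i$. Substituting this into the previous display collapses the normalizing factors,
\[
\wh{\tau}_i^2 \stackrel{a.s.}{\to} \lambda_i^{-2}\,\Var(\lambda_i h_i(X_{i1})) = \Var(h_i(X_{i1})).
\]
Since the hypotheses of Theorem~\ref{ConsistencyGateaux} force $E\psi_i(X_{i1}) = 0$, and hence $Eh_i(X_{i1}) = \lambda_i^{-1}E\psi_i(X_{i1}) = 0$, the limiting variance equals the second moment, $\Var(h_i(X_{i1})) = E(h_i^2(X_{i1}))$. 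This is precisely the convergence $\wh{\tau}_i^2 \stackrel{a.s.}{\to} E(h_i^2(X_{i1}))$ claimed in the corollary.

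Finally, I would assemble the estimator. As $n_i/n \to \lambda_i$ and each $\wh{\tau}_i^2$ converges almost surely, the definition (\ref{JackknifeEst2Sample}) and the continuous mapping theorem yield
\[
\wh{\sigma}_n^2(T) = \frac{n_1}{n}\wh{\tau}_1^2 + \frac{n_2}{n}\wh{\tau}_2^2 \stackrel{a.s.}{\to} \lambda_1 E(h_1^2(X_{11})) + \lambda_2 E(h_2^2(X_{21})) = \lambda_1\Var(h_1(X_{11})) + \lambda_2\Var(h_2(X_{21})),
\]
which equals $\sigma^2(T)$ by the linearization formula (\ref{FormulaAsVarLinearization}); this establishes the asserted consistency. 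The only genuinely delicate point is the bookkeeping of the constants $\lambda_i$ in passing from $\psi_i$ to $h_i$ through Lemma~\ref{RelationshipLinearStats}, and the attendant conversion of $\Var(\psi_i)$ into the second moment of $h_i$; no new estimates are needed, since the almost sure convergence of the constituent pieces is already in hand from Theorem~\ref{ConsistencyGateaux}.
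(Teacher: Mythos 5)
Your argument is correct and follows essentially the same route as the paper's proof: you identify the limit $\lambda_i^{-2}\Var\bigl(L_{(F_1,F_2)}(\delta_{X_{i1}}-F_i,0)\bigr)$ from Theorem~\ref{ConsistencyGateaux}, use additivity to equate $Z_i$ with $\psi_i(X_{i1})$, and then apply the identification $\psi_i=\lambda_i h_i$ from Lemma~\ref{RelationshipLinearStats} to cancel the $\lambda_i^{-2}$ factor. The only additions beyond the paper's (very terse) proof are the explicit conversion of $\Var(h_i(X_{i1}))$ into $E(h_i^2(X_{i1}))$ via $Eh_i(X_{i1})=0$ and the final weighted assembly of $\wh{\sigma}_n^2(T)$ against (\ref{FormulaAsVarLinearization}), both of which the paper leaves implicit and which you handle correctly.
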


\begin{proof}
	For an additive functional, $ L_{(F_1,F_2)}(H_1, H_2) = L_{F_1}^{(1)}( H_1 ) + L_{F_2}^{(2)}( H_2 ) $, such that
	$ Z_1 = L_{F_1}( \delta_{X_{11}} - F_1 ) = \psi_1( X_{11} ) $ and hence $ \Var( Z_1 ) = \Var( \psi_1( X_{11} ) ) $.
	As shown in Lemma~\ref{RelationshipLinearStats}, $ \psi_1 = \lambda_1 h_1 $, where $ h_1 $ is as in (\ref{LinearTerm}), leading to $ \wh{\tau}_1^2 \stackrel{a.s.}{\to} \lambda_1^{-2} \Var(Z_1) = \Var( h_1(X_{11} ) ) $.
\end{proof}

To summarize, under G\^ateaux differentiability the jackknife variance estimator works, provided asymptotic normality holds. Let us now study the case of a two-sample statistic induced by a Fr\'echet-differentiable statistical functional.

Let $ \rho_1 $ be a metric defined on $ \calD $ and introduce the metric 
\[
  \rho( (G_1, G_2), (H_1, H_2) ) = \rho_1( G_1, H_1 ) + \rho_2( G_2, H_2 ),
\]  
for $ (G_1,H_1), (G_2, H_2) \in \calD \times \calD $, on $ \calD \times \calD $. 

\begin{definition}
	A functional $ T: \calF \times F \to \R $ is called {\em continuously Fr\'echet-differentiable} at $ (G_1,G_2) \in \calF \times \calF $ with respect to $ \rho_1 $, if $ T $ is Fr\'echet-differentiable at $ (F_1, F_2) $, i.e. there is a linear functional $ L_{(F_1,F_2)} $ on $ \calD $, such that for all $ (C_1,C_2) \in \calC = \{ A \subset \calD : A \ \text{bounded} \} $
	\[
	\lim_{t \to 0} \sup_{D \in \calC} \frac{ T( (G_1, G_2) + t D ) - T(G_1,G_2) - L_{(G_1,G_2)}( t D ) }{t} = 0,
	\]
	and $ \rho(G_k,G) \to 0 $ and $ \rho(H_k,H) \to 0 $, as $ k \to \infty $, entail
	\[
	\lim_{k \to \infty} \frac{ T( H_k ) - T(G_k) - L_G( H_k - G_k )}{ \rho(H_k, G_k) } = 0.
	\]
\end{definition}

Proper examples for the choice of $ \rho_1 $, e.g. to ensure that sample quantiles are continuously Fr\'echet differentiable w.r.t. $ \rho_1 $, are discussed in \cite{Shao1993}.

\begin{theorem}
	Suppose that $ T $ is a continuously Fr\'echet-differentiable statistical functional w.r.t a metric $ \rho_1 $. Assume that
	\begin{equation}
	\label{JackGeneralAss-1}
		\rho_1( \wh{F}_{n_i}^{(i)}, F_i ) \stackrel{a.s.}{\to} 0,
	\end{equation}
	as $ n_i \to \infty $, $i = 1, 2 $, and
	\begin{equation}
	\label{JackGeneralAss-2}
		\sum_{j=1}^{n_i} \rho_1^2( \wh{F}_{n_i,-j}^{(i)}, \wh{F}_{n_i}^{(i)} ) = O( n_i^{-1} ),
	\end{equation}
	a.s., for $i = 1, 2$. Then the assertions of Theorem~\ref{ConsistencyGateaux} and Corollary~\ref{CorollaryGateaux} hold true.
\end{theorem}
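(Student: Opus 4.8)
The plan is to re-run the proof of Theorem~\ref{ConsistencyGateaux} essentially verbatim, replacing only the single step where the leave-one-out increments are linearized---the analogue of (\ref{GCons})---by an estimate derived from Fr\'echet rather than G\^ateaux differentiability. As there, I would treat $ \wh{\tau}_1^2 $, the argument for $ \wh{\tau}_2^2 $ being symmetric, and the additive-functional reduction needed for Corollary~\ref{CorollaryGateaux} would carry over unchanged, since it only uses the structure $ L_{(F_1,F_2)}(H_1,H_2) = L_{F_1}^{(1)}(H_1) + L_{F_2}^{(2)}(H_2) $.

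First I would fix the notation of the earlier proof: set $ G = ( \wh{F}_{n_1}^{(1)}, \wh{F}_{n_2}^{(2)} ) $ and, for $ 1 \le i \le n_1 $, $ H_i = ( \wh{F}_{n_1,-i}^{(1)}, \wh{F}_{n_2}^{(2)} ) $, so that $ H_i - G = -\frac{1}{n_1-1}( \delta_{X_{1i}} - \wh{F}_{n_1}^{(1)}, 0 ) $ and $ \rho(H_i, G) = \rho_1( \wh{F}_{n_1,-i}^{(1)}, \wh{F}_{n_1}^{(1)} ) $. Writing $ r_{n,i} = T(H_i) - T(G) - L_{(F_1,F_2)}( H_i - G ) $ for the Fr\'echet remainder and using linearity of $ L_{(F_1,F_2)} $ exactly as before, multiplication by $ (n_1-1) $ and centering yield
\[
V_{n_1,-i} - \overline{V}_{n_1} = -( Z_i - \overline{Z}_{n_1} ) + ( \Delta_{n,i} - \overline{\Delta}_n ), \qquad \Delta_{n,i} = (n_1-1)\, r_{n,i},
\]
where $ V_{n_1,-i} = (n_1-1) T(H_i) $ and $ Z_i = L_{(F_1,F_2)}( \delta_{X_{1i}}, 0 ) $ as in the G\^ateaux proof.

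The key step is to control the remainder in mean square rather than uniformly. By (\ref{JackGeneralAss-1}) together with $ \max_{i} \rho_1^2( \wh{F}_{n_1,-i}^{(1)}, \wh{F}_{n_1}^{(1)} ) \le \sum_{i} \rho_1^2( \wh{F}_{n_1,-i}^{(1)}, \wh{F}_{n_1}^{(1)} ) = O( n_1^{-1} ) $ from (\ref{JackGeneralAss-2}), both $ \rho(G,(F_1,F_2)) $ and $ \max_i \rho(H_i,(F_1,F_2)) $ tend to $ 0 $ a.s. Continuous Fr\'echet differentiability then provides a modulus $ \bar{\varepsilon}_n \to 0 $, extracted from the sequential definition by a routine subsequence argument, with $ |r_{n,i}| \le \bar{\varepsilon}_n\, \rho_1( \wh{F}_{n_1,-i}^{(1)}, \wh{F}_{n_1}^{(1)} ) $ for all $ i $ simultaneously. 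Crucially, I would \emph{not} attempt to bound $ \max_i |\Delta_{n,i}| $, since the amplifying factor $ n_1-1 $ against $ \max_i \rho_1 = O(n_1^{-1/2}) $ makes that estimate diverge; instead I would use the summed form of (\ref{JackGeneralAss-2}),
\[
\frac{1}{n_1} \sum_{i=1}^{n_1} \Delta_{n,i}^2 \le \frac{(n_1-1)^2}{n_1}\, \bar{\varepsilon}_n^2 \sum_{i=1}^{n_1} \rho_1^2( \wh{F}_{n_1,-i}^{(1)}, \wh{F}_{n_1}^{(1)} ) = O( \bar{\varepsilon}_n^2 ) \to 0,
\]
a.s., so that the factor $ (n_1-1)^2 $ is exactly absorbed.

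Finally I would expand the sample variance of $ V_{n_1,-i} $ into the sample variance of the $ Z_i $, a cross term, and the sample variance of the $ \Delta_{n,i} $. The first converges a.s.\ to $ \Var(Z_1) $ by the strong law of large numbers as in (\ref{GCons3}), the third vanishes by the display above, and the cross term vanishes by the Cauchy--Schwarz inequality, being dominated by the product of the square roots of the first and third. Reinstating the normalizing factor $ \big( (n-1)/(n_1-1) \big)^2 \to \lambda_1^{-2} $ gives $ \wh{\tau}_1^2 \stackrel{a.s.}{\to} \lambda_1^{-2} \Var(Z_1) $, which is precisely the conclusion of Theorem~\ref{ConsistencyGateaux}, and Corollary~\ref{CorollaryGateaux} follows as in the G\^ateaux case. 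The main obstacle is exactly this mean-square control of $ \Delta_{n,i} $: the naive uniform bound fails, and hypothesis (\ref{JackGeneralAss-2}) is tailored to supply the quadratic summability that compensates for the factor $ (n_1-1)^2 $.
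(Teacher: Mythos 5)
Your proposal is correct and follows essentially the same route as the paper's proof: both isolate the Fr\'echet remainder $r_{n,i}$ of the leave-one-out increment, use (\ref{JackGeneralAss-1}) plus the bound $\max_i \rho_1^2(\wh{F}_{n_1,-i}^{(1)},\wh{F}_{n_1}^{(1)}) \le \sum_i \rho_1^2(\cdot) = O(n_1^{-1})$ to invoke continuous Fr\'echet differentiability uniformly in $i$, and then absorb the factor $(n_1-1)^2$ via the summed hypothesis (\ref{JackGeneralAss-2}) --- your display $\frac{1}{n_1}\sum_i \Delta_{n,i}^2 = O(\bar\varepsilon_n^2)$ is exactly the paper's key estimate $(n_1-1)\sum_i R_{ni}^2 = O(\varepsilon^2)$ up to a factor tending to one. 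The only difference is presentational: you spell out the Cauchy--Schwarz treatment of the cross term that the paper delegates to ``the same arguments as in Shao (1993).''
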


\begin{proof}
	Put 
	\[
	  R_{ni} = T( \wh{F}_{n_1,-i}^{(1)}, \wh{F}_{n_2}^{(2)} ) - T( \wh{F}_{n_1}^{(1)},  \wh{F}_{n_1}^{(2)} ) 
	  - L_{(F_1,F_2)}( \wh{F}_{n_1,-i}^{(1)} - \wh{F}_{n_1}^{(1)}, 0  )
	\]
	for $ i = 1, \dots, n_1 $, and $ \overline{R} = \frac{1}{n_1} \sum_{j=1}^{n_1} R_{nj} $. Here
	\[
		L_{(F_1,F_2)}( \wh{F}_{n_1,-i}^{(1)} - \wh{F}_{n_1}^{(1)}, 0  )
		= \frac{1}{n_1-1} \sum_{j=1, j \not=i}^{n_1} Z_j - \overline{Z},
	\]
	with $ Z_i = L_{(F_1,F_2)}( \delta_{X_{1i}} - F_1, 0 ) $, $ i = 1, \dots, n_1 $, and $ \overline{Z} = \frac{1}{n_1} \sum_{i=1}^{n_1} Z_i $. Recall that
	\[
	  \wh{\tau}_1^2 = \frac{(n-1)^2}{n_1-1} \sum_{i=1}^{n_1} \left( T( \wh{F}_{n_1,-i}^{(1)}, \wh{F}_{n_2}^{(2)} ) - \overline{T( \wh{F}_{n_1,-\cdot}^{(1)}, \wh{F}_{n_2}^{(2)} )} \right)^2,
	\]
	where $  \overline{T( \wh{F}_{n_1,-\cdot}^{(1)}, \wh{F}_{n_2}^{(2)} )} = \frac{1}{n_1} \sum_{i=1}^{n_1} T( \wh{F}_{n_1,-i}^{(1)}, \wh{F}_{n_2}^{(2)} ).$ 
	Using
	\[
	  T( \wh{F}_{n_1,-i}^{(1)}, \wh{F}_{n_2}^{(2)} ) - \overline{T( \wh{F}_{n_1,-\cdot}^{(1)}, \wh{F}_{n_2}^{(2)} )}
	  = R_{ni} - \overline{R} +  \frac{1}{n_1-1} \sum_{j=1, j \not=i}^{n_1} Z_j - \overline{Z}
	\]
	we obtain
	\begin{align*}
	   \wh{\tau}_1^2 & =  \frac{(n-1)^2}{(n_1-1)^3} \sum_{i=1}^{n_1} (Z_i - \overline{Z})^2
	    +  \frac{(n-1)^2}{n_1-1} \sum_{i=1}^{n_1} ( R_{ni} - \overline{R} )^2 \\
	    & \qquad + 2  \frac{(n-1)^2}{n_1-1} \sum_{i=1}^{n_1} R_{ni} \frac{1}{n_1-1} \sum_{j=1,j \not=i }^{n_1} Z_j - \overline{Z}. 
	\end{align*}
	Since $ \frac{(n-1)^2}{(n_1-1)^3} = O( \frac{1}{n-1} ) $ and $ \frac{(n-1)^2}{n_1-1} = O( n-1 ) $, the same arguments as in \cite{Shao1993} entail that it suffices to show that
	\begin{equation}
	\label{Th2ToShow}
	  (n_1-1) \sum_{i=1}^{n_1} R_{ni}^2 \stackrel{a.s.}{\to} 0,
	\end{equation}
	as $ n_1 \to \infty $. First, observe that $ \rho( (\wh{F}_{n_1,-i}^{(1)}, 0 ), (F_1,0 ) ) = \rho_1( \wh{F}_{n_1,-i}^{(1)}, F_1 ) $
	and 
	\[
	  \max_{1 \le i \le n_1} \rho_1( \wh{F}_{n_1,-i}^{(1)}, F_1 ) \le \rho_1( \wh{F}_{n_1}^{(1)}, F_1 ) + \sqrt{ \max_{1 \le i \le n_1} \rho_1^2( \wh{F}_{n_1,-i}^{(1)}, \wh{F}_{n_1}^{(1)} ) } \stackrel{a.s}{\to} 0,
	\]
	as $ n_1 \to \infty $ by (\ref{JackGeneralAss-1}). Since $ T $ is continuously Fr\'echet-differentiable, 
    \[
      \frac{ T( \wh{F}_{n_1,-i}^{(1)}, \wh{F}_{n_2}^{(2)} ) - T( \wh{F}_{n_1}^{(1)},  \wh{F}_{n_2}^{(2)} )  
      	- L_{(F_1,F_2)}( \wh{F}_{n_1,-i}^{(1)} - \wh{F}_{n_1}^{(1)}, 0  )  }{ \rho_1( \wh{F}_{n_1,-i}^{(1)}, \wh{F}_{n_1}^{(1)} ) } = o(1),
    \]
    as $ n_1 \to \infty $, a.s.. Therefore, for any $ \varepsilon > 0 $ there exists $ n_0 \in \N $ such that for $ n \ge n_0 $
	\[
	  | T( \wh{F}_{n_1,-i}^{(1)}, \wh{F}_{n_2}^{(2)} ) - T( \wh{F}_{n_1}^{(1)},  \wh{F}_{n_1}^{(2)} )  
	  - L_{(F_1,F_2)}( \wh{F}_{n_1,-i}^{(1)} - \wh{F}_{n_1}^{(1)}, 0  ) | \le \varepsilon \rho_1( \wh{F}_{n_1,-i}^{(1)}, \wh{F}_{n_1}^{(1)} )
	\]
	It follows that $ R_{ni}^2 \le \varepsilon^2 \rho_1^2( \wh{F}_{n_1,-i}^{(1)}, \wh{F}_{n_1}^{(1)} ) $, such that
	\[
	  (n_1-1) \sum_{i=1}^{n_1} R_{ni}^2 \le \varepsilon^2 (n_1-1) \sum_{i=1}^{n_1} \rho_1^2( \wh{F}_{n_1,-i}^{(1)}, \wh{F}_{n_1}^{(1)} ) = O( \varepsilon ),
	\]
	as $ n_1 \to \infty $, a.s., by virtue of (\ref{JackGeneralAss-2}), which completes the proof.
\end{proof}

\section{Review of Estimation under ordered variances}
\label{sec: Estimation}

Let $X_{ij}, i=1,2, j= 1, \dots, n_i(\geq 2)$ be independent observations from the normal distribution with mean $\mu_i$ and variance $\sigma_i^2$, where both $\mu_i$ and $\sigma_i^2$ are unknown. 
Also let
\[
\overline{X}_i = \frac{1}{n_i} \sum_{j=1}^{n_i} X_{ij}, \qquad S_i^2 = \frac{1}{n_i} \sum_{j=1}^{n_i} (X_{ij}- \overline{X}_i)^2, \qquad \wt{S}_i^2 = \frac{n_i}{n_i-1} S_i^2,
\]
be the sample means, sample variances and the associated unbiased variance estimators, which are frequently used for estimation of $ \mu_i $ and $ \sigma_i^2 $, $ i = 1, 2 $. When needed, we shall indicate the dependence on the sample sizes and write $ S_{n_i,i}^2 $ for $ S_i^2 $, $ i = 1, 2$.

When $\sigma_i^2 , i=1,2$ are known, to estimate the common mean, $ \mu = \mu_1 = \mu_2 $,   one may use the unbiased minimum variance estimator
\[
\wh{\mu}^{(0)} = \frac{ n_1 \sigma_2^2 }{ n_1 \sigma_2^2 + n_2 \sigma_1^2 } 
\overline{X}_1 + \frac{ n_2 \sigma_1^2 }{ n_1 \sigma_2^2 + n_2 \sigma_1^2 } 
\overline{X}_2.
\]
When variances are unknown,  Graybill-Deal (GD)  proposed for the case of no order restriction the estimator
\[
\wh{\mu}^{(1)} = \frac{ n_1 \wt{S}_2^2 }{ n_1 \wt{S}_2^2 + n_2 \wt{S}_1^2 } 
\overline{X}_1 + \frac{ n_2 \wt{S}_1^2 }{ n_1 \wt{S}_2^2 + n_2 \wt{S}_1^2 } 
\overline{X}_2.
\]
Here the sample averages are weighted with {\em random} weights forming a convex combination as 
	\[
	\wh{\mu} (\gamma) = \gamma \overline{X}_1+ ( 1-\gamma )  \overline{X}_2.
	\]
	where $\gamma$ is a function of $S_1^2, S_2^2$ and possible $(\overline{X}_1-\overline{X}_2)^2$.

\cite{Kubokawa1989} has introduce a broad class of common mean estimators 
with $\gamma$ given by 
\[
\gamma_{\psi} = 1 - \frac{ a }{ b R \psi( \wt{S}_1^2, \wt{S}_2^2, (\overline{X}_1 - \overline{X}_2)^2 ) }
\]
where $ R = ( b \wt{S}_2^2 + c ( \overline{X}_1 - \overline{X}_2 )^2 ) / \wt{S}_1^2 $,
	$ \psi $ a positive function and $ a, b, c \ge 0 $ constants. For suitably chosen $\psi, a, b$ and $c$. Kobokawa has given a sufficient condition on $n_1$ and $n_2$ so that $\wh{\mu}(\gamma_{\psi})$ is closer to $\mu$ than $\overline{X}_1$ , see \cite{Pitman1937}. 
Such estimators have also been studied by several authors assuming Gaussian samples, see e.g. \cite{BrownCohen1974} 
and \cite{Bhattacharya1980}, amongst others.

When there is an order restriction between both variances,
\cite{MehtaGurland1969} proposed three convex combinations estimators for small samples and compare the efficiencies of proposed estimators with GD estimator. 
When order constraints on the variances apply, the question arises whether one can improve upon the above proposals. There is a rich literature on the general problem of estimation for constrained parameter spaces and we refer the reader to the monograph of \cite{vanEeden2006}.

For the common mean estimation problem studied here, the following results have been obtained, again under the assumption of normality: 
When there is a constraint of ordered variances, $ \sigma_1^2 \le \sigma_2^2 $,
\cite{Elfessi1992}  proposed
\[
\wh{\mu}^{(2)} 
= \wh{\mu}^{(1)} \eins( \wt{S}_1^2 \le \wt{S}_2^2 ) +
\left[ \frac{n_1}{n_1+n_2} \overline{X}_1 + \frac{n_2}{n_1+n_2} 
\overline{X}_2 \right] \eins( \wt{S}_1^2 > \wt{S}_2^2 ),
\]
and showed that $ \wh{\mu}^{(2)} $ also stochastically dominates the GD estimator.  
For the special case of balanced sample sizes, $ n_1 = n_2 $, \cite{Elfessi1992} 
also proposed the estimator
\[
\wh{\mu}^{(3)} 
= \wh{\mu}^{(1)} \eins( \wt{S}_1^2 \le \wt{S}_2^2 ) +
\left[ \frac{\wt{S}_1^2}{\wt{S}_1^2 + \wt{S}_2^2} \overline{X}_1 + \frac{\wt{S}_2^2}{ \wt{S}_1^2 + \wt{S}_2^2} 
\overline{X}_2 \right] \eins( \wt{S}_1^2 > \wt{S}_2^2 ),
\]
which stochastically dominates the GD estimator as well. 


Chang et al. (2012) 
have shown, see their Theorem~2.1, that for ordered variances, $ \sigma_1^2 \le \sigma_2^2 $, any such estimator can be further improved by replacing $ \gamma_n $ by some appropriately chosen $ \wt{\gamma} $ if $ \gamma_n < n_1/n $. This means, one uses
\[
\gamma^+_n = \left\{
\begin{array}{ll}
\gamma_n, & \gamma_n \ge n_1/n, \\
\wt{\gamma}, & \gamma_n < n_1/n, 
\end{array}
\right.
\]
instead of $ \gamma_n $. 
Chang et al. (2012)
proved that the estimator $ \wh{\mu}_n(\gamma^+_n) $ stochastically dominates $ \wh{\mu}_n(\gamma_n) $, if $ \gamma_n < n_1/n $ with positive probability and $ \wt{\gamma} $ satisfies the 
constraints
\begin{equation}
	\label{ConditionChang}
	n_1/n \le \wt{\gamma} \le 2 n_1/n - \gamma_n.
\end{equation}
Similar results are obtained by Chang and Shinozaki (2015) under Pitman closeness criterion.

All results discussed above have been obtained assuming Gaussian samples. Motivated by the above findings, we consider, within a purely nonparametric setup, common mean estimators of the form
\begin{equation}
\label{DefEst}
 \wh{\mu}_n(\gamma) = \gamma_n \overline{X}_1 + (1-\gamma_n) \overline{X}_2
\end{equation}
where the weight
\[
  \gamma_n = \gamma( n_1/n, n_2/n, \wt{S}_1, \wt{S}_2, \overline{X}_1,  \overline{X}_2 ) 
\]
is a function of the the sample fractions $ n_1/n, n_2/n $ and the statistic $ (\wt{S}_1, \wt{S}_2, \overline{X}_1,  \overline{X}_2 ) $, which is the sufficient statistic under normality. To establish the required theoretical results, we need the following weak assumption on the weights $ \gamma_n $, which is satisfied by the above choices.

\textbf{Assumption ($\Gamma$):} $ X_{ij} \sim F_i $, $ 1 \le j \le n_i $,  $ i = 1,2 $, are independent random samples with common means $ \mu = \mu_1 = \mu_2 $ and arbitrary variances $ \sigma_1^2 $ and $ \sigma_2^2 $. The random weights $ \gamma_n = \gamma_n( n_1/n, n_2/n, \wt{S}_1^2, \wt{S}_2^2, \overline{X}_1, \overline{X}_2 ) $ are either of the form
\[
  \gamma_n = \left\{
    \begin{array}{ll}
      \gamma^\le, \qquad & \wt{S}_1^2 \le \wt{S}_2^2, \\
      \gamma^>, \qquad & \wt{S}_1^2 > \wt{S}_2^2,
    \end{array}
  \right.
\]
for two functions $ \gamma^\le = \gamma^\le ( n_1/n, n_2/n, \wt{S}_1^2, \wt{S}_2^2, \overline{X}_1, \overline{X}_2) $ and $ \gamma^> = \gamma^>( n_1/n, n_2/n, \wt{S}_1^2, \wt{S}_2^2, \overline{X}_1, \overline{X}_2) $, where $ \gamma^\le( \cdot ) $ and $ \gamma^>(\cdot) $ are three times continuously differentiable functions with bounded third partial derivatives, or of the form 
\[
\gamma_n = \left\{
\begin{array}{ll}
\gamma^\le, \qquad & S_1^2 \le S_2^2, \\
\gamma^>, \qquad & S_1^2 > S_2^2,
\end{array}
\right.
\]
for two functions $ \gamma^\le = \gamma^\le ( n_1/n, n_2/n, S_1^2, S_2^2, \overline{X}_1, \overline{X}_2) $ and $ \gamma^> = \gamma^>( n_1/n, n_2/n, S_1^2, S_2^2, \overline{X}_1, \overline{X}_2) $, where $ \gamma^\le( \cdot ) $ and $ \gamma^>(\cdot) $ are three times continuously differentiable functions with bounded third partial derivatives.

\begin{example}
	Observe that
	\[
	  \wh{\mu}^{(1)} = \frac{ \wt{S}_2^2 }{ \wt{S}_2^2 + \frac{n_2}{n} \frac{n}{n_1} \wt{S}_1^2 } \overline{X}_1
	  + \frac{ \wt{S}_1^2 }{ \frac{n_1}{n} \frac{n}{n_2} \wt{S}_2^2 + \wt{S}_1^2 } \overline{X}_2 
	  = \gamma_4^{\le}( n_1/n, n_2/n,\wt{S}_1^2, \wt{S}_2^2, \overline{X}_1, \overline{X}_2 ) 
	\]
	and 
	\[ 
		\frac{n_1}{n_1+n_2} \overline{X}_1 + \frac{n_2}{n_1+n_2} \overline{X}_2
		= \frac{1}{ 1 + \frac{n_2}{n} \frac{n}{n_1} } \overline{X}_1 + \frac{1}{ \frac{n_1}{n} \frac{n}{n_2} + 1} \overline{X}_2
		= \gamma_4^>( n_1/n, n_2/n,\wt{S}_1^2, \wt{S}_2^2, \overline{X}_1, \overline{X}_2 ),
	\]
	where the functions
	\begin{align*}
	  \gamma_4^{\le}( a, b, s, t, \mu, \nu ) & =  \mu \frac{t}{ t + s (b/ a)} + \nu \frac{s}{t (a/b) + s},  \\
	  \gamma_4^{>}( a, b, s, t, \mu, \nu )  & = \mu \frac{1}{1+b/a} + \nu \frac{1}{a/b + 1},
	\end{align*}
	are defined for $ (a,b,s,t,\mu, \nu) \in (0,1)^2 \times (0,\infty)^2 \times \R^2 $. It is easily seen that all partial derivatives of order three exist and are bounded on compact subsets of the domain. For example,
	\begin{align*}
		\frac{\partial \gamma^{\le}}{\partial t} & = -a \nu s / (b (a t / b + s) ^ 2) + \mu  (-t / (b s / a + t) ^ 2 + 1 / (b s / a + t)), \\
		\frac{\partial^2 \gamma^{\le}}{\partial t^2} & =  2 a ^ 2 \nu s / (b ^ 2 (a t / b + s) ^ 3) + \mu (2 t / (b s / a + t) ^ 3 - 2 / (b s / a + t) ^ 2), \\
		\frac{\partial^2 \gamma^{\le}}{\partial t^2 \partial s} & =2 a ^ 2 \nu (-3 s / (a t / b + s) ^ 4 + 1 / (a t / b + s) ^ 3) / b ^ 2 + \mu (4 b / (a (b s / a + t) ^ 3) - 6 b t / (a (b s / a + t) ^ 4)),
	 \end{align*}
	 where all expressions appearing in a denominator are positive. Therefore, $ \wh{\gamma}^{(4)} $ attains the representation
	\[
	  \wh{\gamma}^{(4)} = \gamma_4^{\le}( n_1/n, n_2/n,\wt{S}_1^2, \wt{S}_2^2, \overline{X}_1, \overline{X}_2 )  \eins( \wt{S}_1^2 \le \wt{S}_2^2 ) + \gamma_4^>( n_1/n, n_2/n,\wt{S}_1^2, \wt{S}_2^2, \overline{X}_1, \overline{X}_2 ) \eins( \wt{S}_1^2 > \wt{S}_2^2 )
	\]
	and satisfies Assumption $(\Gamma)$.
\end{example}

\begin{remark}
As discussed above in greater detail, we have in mind the case of ordered variances and our results aim at contributing to the problem of common mean estimation under ordered variances. But since it turns out that many results also hold for unequal variances without order restriction, we omit the order restriction in Assumption $ (\Gamma) $.
\end{remark}

\begin{remark}
  Additional assumptions on the underlying distributions will be stated where needed.
\end{remark}

\begin{remark}
\label{RemarkOneSample}
  When discussing the case of equal sample sizes $ N = n_1 = n_2 $, we shall index all quantities by $N$ instead of $n$. Further, we may and will redefine
  $ \gamma_N $ as well as $ \gamma^\le $ and $ \gamma^> $ to be functions
  of
  \[
     \wh{\theta}_N = ( N/n, \wt{S}_1^2, \wt{S}_2^2, \overline{X}_1, \overline{X}_2 )',
  \]
  which converges a.s. under Assumption ($ \Gamma $) to
  \[
    \theta = (1/2, \sigma_1^2, \sigma_2^2, \mu_1, \mu_2 )',
  \]
  as $ N \to \infty $.
\end{remark}

\section{Variance estimation and asymptotic distribution theory for common mean estimators}

\label{sec: jackknife common mean}

As already indicated in the introduction, there is a lack of results about the estimation of the variance of the common mean estimators discussed in the literature. For the case of normal populations, \cite{Nair1980} calculated the variance of the GD estimator for two populations and \cite{Voinov1984} extended the result to the case of several samples. \cite{MehtaGurland1969} gave formulas for the variances of their common mean estimators. The issue of unbiased estimation of  the variance for the GD estimator has been studied in \cite{Voinov1984} and \cite{Sinha1985}. All of those results, however, heavily rely on the normal assumption.

Therefore, we propose to use the nonparametric jackknife variance estimator studied in the previous section, which is applicable for a wide class common mean estiomators defined by convex combinations of the sample means with random weights. We shall show that the jackknife is weakly consistent and asymptotically unbiased under fairly weak conditions without requiring normally distributed observations.

In order to make the dependence on the data explicit, denote the common mean estimator by
\[
  \wh{\mu}_n(\gamma) = \wh{\mu}_{n_1,n_2}( X_{11}, \dots, X_{1n_1}; X_{21}, \dots, X_{2n_2} ).
\]
The leave-one-out estimates corresponding to the first sample are given by
\[
  \wh{\mu}_{n,-i}^{(1)} = \wh{\mu}_{n_1-1,n_2}( X_{11}, \dots, X_{1,i-1}, X_{1,i+1}, \dots, X_{1n_1};
   X_{21}, \dots, X_{2n_2} ),
\]
$ i = 1, \dots, n_1 $, and those for the second sample are
\[
  \wh{\mu}_{n,-i}^{(2)} = \wh{\mu}_{n_1,n_2-1}( X_{11},  \dots, X_{1n_1};
   X_{21}, \dots,   X_{2,i-1}, X_{2,i+1}, \dots, X_{2n_2} ),
\]
$ i = 1, \dots, n_2 $. Now the jackknife variance estimator of $ \Var( \wh{\mu}_n(\gamma ) ) $ is given by
\[
  \wh{\Var}( \wh{\mu}_n(\gamma) ) = \frac{n_1}{n} \wh{\tau}_n^{(1)} + \frac{n_2}{n} \wh{\tau}_n^{(2)},
\]
with
\begin{align*}
  \wh{\tau}_n^{(i)}  & = \frac{1}{n_i} \sum_{j=1}^{n_i} \left( \wh{\mu}_{n,-j}^{(i)} - \overline{\wh{\mu}}_{n,\bullet}^{(i)} \right)^2
\end{align*}
where $ \overline{\wh{\mu}}_{n,\bullet}^{(i)} = n_i^{-1} \sum_{j=1}^{n_i} \wh{\mu}_{n,-j}^{(i)} $, $ i = 1, 2 $.

The simplified jackknife variance estimator for the special case of equal sample sizes is as follows:
Again, to make the dependence on the data explicit, let us write
\[
  \wh{\mu}_N( \gamma ) = \wh{\mu}_N( Z_1, \dots, Z_N )
\]
where
\[
  Z_i = (X_{1i}, X_{2i})', \qquad i = 1, \dots, N.
\]
Consider the leave-one-out estimates
\[
  \wh{\mu}_{N,-i}( \gamma ) = \wh{\mu}_N( Z_1, \dots, Z_{i-1}, Z_{i+1}, \dots, Z_N ),
\]
for $ i = 1, \dots, N $. Then the jackknife estimator for the asymptotic variance of $ \wh{\mu}_N(\gamma) $,
\[
  \sigma^2(\gamma) = \sigma_N^2( \wh{\mu}_N(\gamma) ) = \Var( \sqrt{N} \wh{\mu}_N(\gamma) )
\]
is defined by
\begin{equation}
\label{Jack0}
\wh{\sigma}_N^2( \gamma ) = (N-1) \sum_{i=1}^N 
( \wh{\mu}_{N,-i} - \overline{\wh{\mu}}_{N, \bullet } )^2,
\end{equation}
and the jackknife variance estimator of $ \Var( \wh{\mu}_N(\gamma) ) $ is \begin{equation}
\label{Jack}
\wh{\Var}( \wh{\mu}_N(\gamma) ) = \frac{N-1}{N} \sum_{i=1}^N 
( \wh{\mu}_{N,-i} - \overline{\wh{\mu}}_{N, \bullet } )^2,
\end{equation}
where $ \overline{\wh{\mu}}_{N,\bullet}  = N^{-1} \sum_{i=1}^N \wh{\mu}_{N,-i} $.

The consistency of the jackknife is now established by invoking the key results of the previous section,
namely by proving that the common mean estimator is asymptotically linear with an appropriate remainder term. For simplicity of exposition, we state and prove the result for the case of equal sample sizes. The proof only uses elementary arguments, but it is  long and technical. It is therefore provided in Appendix~\ref{AppendixB}.

\begin{theorem} 
\label{Theorem_Taylor}
Suppose that $ X_{ij} \sim F_i $, $ j = 1, \dots, N $, are two i.i.d. samples
with distribution functions $ F_1, F_2 $ satisfying $ E( X_{11}^{12} ) < \infty $, $ E( X_{21}^{12} ) < \infty $. Then the following assertions hold.
\begin{itemize}
\item[(i)] We have
  \begin{align*}
    \wh{\mu}_N(\gamma) - \mu 
    & = \biggl[ (\nabla [ \gamma^\le(\theta) \mu] + \nabla [(1-\gamma^\le(\theta)) \mu ]) \eins_{\{ \sigma_1^2 < \sigma_2^2 \}} \\
    & \quad +
       (\nabla [\gamma^>(\theta) \mu] + \nabla [(1-\gamma^>(\theta)) \mu])  \eins_{\{ \sigma_1^2 > \sigma_2^2 \}} \biggr]( \wh{\theta}_N - \theta) + R_N^\gamma,
  \end{align*}
  for some remainder term $ R_N^\gamma $ with $ N E(R_N^\gamma)^2 = O(1/N) $
  and $ N^2 E( R_N - R_{N-1} )^2 = o(1) $, as $ N \to \infty $.
\item[(ii)] For ordered variances, i.e. if either $ \sigma_1^2 < \sigma_2^2 $ it holds
  \[
    \wh{\mu}_N(\gamma) - \mu = \gamma(\theta) ( \overline{X}_1 - \mu ) + (1-\gamma(\theta)) ( \overline{X}_2 - \mu ) + R_N,
  \]
  and for $ \sigma_1^2 > \sigma_2^2 $
  \[
  \wh{\mu}_N(\gamma) - \mu = (1-\gamma(\theta)) ( \overline{X}_1 - \mu ) + \gamma(\theta) ( \overline{X}_2 - \mu ) + R_N,
  \]  
  with $ N E( R_N^2 ) = O(1/N) $.
\item[(iii)] The jackknife variance estimator is consistent, 
\[ 
  \frac{\wh{\sigma}_N^2( \wh{\mu}_N )}{\sigma^2(\gamma)} = 1 + o_P(1),
\]
as $ N \to \infty $, and asymptotically unbiased,
\[
  \frac{E( \wh{\sigma}_N^2( \wh{\mu}_N ))}{\sigma^2(\gamma)} = 1 + o(1),  
\]
as $ N \to \infty $, and the same applies to $ \wh{\sigma}_N(\gamma) $.
\end{itemize} 
\end{theorem}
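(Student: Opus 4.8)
The plan is to prove (i) directly, deduce (ii) from it by an algebraic cancellation, and then obtain (iii) as an immediate application of the equal-sample-size jackknife consistency theorem of Section~\ref{sec: jackknife}. Throughout I assume the strict ordering $ \sigma_1^2 \neq \sigma_2^2 $ (the boundary case $ \sigma_1^2 = \sigma_2^2 $, where the sample ordering of the $ \wt{S}_i^2 $ does not stabilise and both deterministic indicators vanish, is excluded), and by symmetry I treat $ \sigma_1^2 < \sigma_2^2 $, so the relevant branch of the weight is $ \gamma^\le $. I write $ g^\le(\wh{\theta}_N) = \gamma^\le(\wh{\theta}_N) \overline{X}_1 + (1-\gamma^\le(\wh{\theta}_N)) \overline{X}_2 $ for the smooth map obtained by always using the $ \gamma^\le $-branch, and note that $ g^\le(\theta) = g^>(\theta) = \mu $, since both mean-components of $ \theta $ equal $ \mu $.

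First I would reduce to this smooth branch. On the bad event $ B_N = \{ \wt{S}_1^2 > \wt{S}_2^2 \} $ the estimator uses $ \gamma^> $ rather than $ \gamma^\le $, but since $ \wt{S}_i^2 \to \sigma_i^2 $ a.s. and $ \sigma_1^2 < \sigma_2^2 $, a Markov bound on $ |\wt{S}_1^2 - \sigma_1^2| + |\wt{S}_2^2 - \sigma_2^2| > \sigma_2^2 - \sigma_1^2 $ applied with the sixth power, together with a Rosenthal-type estimate of $ E|\wt{S}_i^2 - \sigma_i^2|^6 = O(N^{-3}) $, gives $ P(B_N) = O(N^{-3}) $ under $ E(X_{i1}^{12}) < \infty $; this is where the twelve moments enter. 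Splitting $ B_N $ according to whether $ \wh{\theta}_N $ lies in a fixed neighbourhood of $ \theta $, the weights stay bounded on the near part, while the far part carries large-deviation probability dominating the moments of the weights there. A Cauchy-Schwarz estimate, using $ g^>(\theta) = \mu $, then shows that the difference between the true estimator and its $ g^\le $-branch contributes to $ R_N^\gamma $ only a term of second moment $ O(N^{-5/2}) = o(N^{-2}) $, with an even smaller leave-one-pair-out increment. It therefore remains to analyse the smooth branch.

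Next I would Taylor-expand $ g^\le $ about $ \theta $,
\[
  g^\le(\wh{\theta}_N) = \mu + \nabla g^\le(\theta)'(\wh{\theta}_N - \theta) + \tfrac12 (\wh{\theta}_N - \theta)' \, \nabla^2 g^\le(\xi_N) \, (\wh{\theta}_N - \theta),
\]
with $ \xi_N $ on the segment between $ \theta $ and $ \wh{\theta}_N $. Writing $ \nabla g^\le = \nabla[\gamma^\le(\cdot)\mu] + \nabla[(1-\gamma^\le(\cdot))\mu] $ recovers the leading term of (i). The crucial observation for (ii) is that at $ \theta $ all partial derivatives of $ g^\le $ with respect to the \emph{variance} arguments cancel: because $ g^\le = \gamma^\le \mu + (1-\gamma^\le)\mu \equiv \mu $ whenever both mean-components equal $ \mu $, differentiating in $ \wt{S}_i^2 $ yields $ \mu\,\partial_{s_i}\gamma^\le - \mu\,\partial_{s_i}\gamma^\le = 0 $. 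Only the two derivatives in the mean arguments survive, namely $ \gamma^\le(\theta) $ and $ 1-\gamma^\le(\theta) $, so the linear term collapses to $ \gamma^\le(\theta)(\overline{X}_1 - \mu) + (1-\gamma^\le(\theta))(\overline{X}_2 - \mu) $, which is exactly (ii) with $ \gamma(\theta) = \gamma^\le(\theta) $. Thus, despite the data-dependent weights, the estimator is first-order equivalent to a \emph{fixed} convex combination of the two sample means.

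Finally I would verify the two moment bounds on the quadratic Taylor remainder, the technical heart of the argument. For $ N E(R_N^\gamma)^2 = O(1/N) $ I would bound $ \nabla^2 g^\le $ uniformly on a neighbourhood of $ \theta $ (using the continuous second derivatives and the concentration of $ \wh{\theta}_N $) and estimate $ E|\wh{\theta}_N - \theta|^4 = O(N^{-2}) $ componentwise, the variance components again consuming the higher moments of $ X $. The main obstacle is the condition $ N^2 E(R_N^\gamma - R_{N-1}^\gamma)^2 = o(1) $, which requires controlling the leave-one-pair-out increment of the quadratic form: writing the difference of the two quadratics as a bilinear expression in $ \wh{\theta}_N - \theta $ and the increment $ \wh{\theta}_N - \wh{\theta}_{N-1} = O_P(1/N) $, and exploiting the bounded third derivatives (a Lipschitz Hessian) to handle the change in the evaluation point, each term is $ O(N^{-3/2}) $ in $ L_2 $, so $ E(R_N^\gamma - R_{N-1}^\gamma)^2 = O(N^{-3}) $; the twelve-moment hypothesis is precisely what legitimises all these $ L_2 $ estimates of products. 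With (i) and (ii) established, part (iii) is immediate: the representation (ii) exhibits $ \wh{\mu}_N(\gamma) $ as an asymptotically linear two-sample statistic with kernels $ h_1(x) = 2\gamma(\theta)(x-\mu) $ and $ h_2(x) = 2(1-\gamma(\theta))(x-\mu) $, which satisfy $ \int h_i^4 \, dF_i < \infty $, and a remainder meeting (\ref{CrucialCondRemainder1}) and (\ref{CrucialCondRemainder}), so the equal-sample-size jackknife consistency and asymptotic unbiasedness results of Section~\ref{sec: jackknife} apply directly.
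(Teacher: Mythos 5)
Your proposal follows essentially the same route as the paper's proof in Appendix~\ref{AppendixB}: control the wrong-branch event via a Markov-type moment bound on $ P( \wt{S}_1^2 > \wt{S}_2^2 ) $, Taylor-expand the smooth branch to second order about $ \theta $ and observe that the derivatives with respect to the variance coordinates cancel under $ \mu_1 = \mu_2 $, bound the quadratic remainder and its leave-one-out increment in $ L_2 $/$L_4$ using the asymptotic linearity of $ \wt{S}_i^2 $, and finally invoke Theorem~\ref{TwoSampleJack}. The only deviations are minor technical variants: you use a sixth-power Markov bound and a Lipschitz-Hessian argument where the paper uses a fourth-power bound (Lemma~\ref{EstimationVariances}) and a dominated-convergence argument for the change of the Hessian evaluation point, and your kernels carry the factor $2$ needed to match the normalization $ 1/n = 1/(2N) $ in (\ref{LinearTerm}).
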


\begin{remark} For unequal sample sizes the assertions follow under the additional condition
\begin{equation}
\label{CondSampleSizes}
  \lim_{\min(n_1,n_2) \to \infty} \frac{n_i}{n} = \lambda_i, \qquad i = 1,2,
\end{equation}
for $ \lambda_1, \lambda_2 \in (0, 1) $.  Then the estimates  
 (\ref{HigherOrderEstimateSampleVariance}) and (\ref{HigherOrderEstimateSampleVariance2}) hold, since they only require the decomposition (\ref{SampleVarianceAsLinear}) and the estimates (\ref{PropRemainderSampleVar}) given in Lemma~\ref{SampleVarAsLinear}.
\end{remark}

Let us now briefly discuss the asymptotic normality of the class of estimators under investigation, which does not require ordered variances. Put
\[
  \gamma = \gamma^\le \eins_{\{ \sigma_1^2 < \sigma_2^2\}} + \gamma^> \eins_{\{ \sigma_1^2 > \sigma_2^2 \}}.
\]
Clearly, for deterministic $ \gamma_N $ the CLT holds. More generally, for random weights satisfying Assumption ($\Gamma$), i.e. if $ \gamma_N $ may depend on $ \wh{\theta}_N $, we have the following result. 

\begin{theorem} 
\label{CLT}
Under Assumption $(\Gamma)$ and (\ref{CondSampleSizes}) the common mean estimator $ \wh{\mu}_n(\gamma) $ satisfies the central limit theorem
\[
  \sqrt{n}[ \wh{\mu}_n( \gamma ) - \mu ] \stackrel{d}{\to} 
  N( 0, \sigma^2(\gamma) ),
\]
as $ n \to \infty $, where the asymptotic variance is given by
\begin{equation}
\label{AsVar}
 \sigma^2( \gamma ) = \gamma^2 \lambda_1^{-1} \sigma_1^2 + (1-\gamma)^2 \lambda_2^{-1} \sigma_2^2.
\end{equation}
\end{theorem}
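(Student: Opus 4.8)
The plan is to write the scaled estimator as a linear combination of the two centred sample means carrying the \emph{deterministic} limiting weight $\gamma$, plus a correction due to the randomness of $\gamma_n$ that I will argue is asymptotically negligible. The first piece is then a fixed linear functional of a jointly asymptotically normal vector, whose limit is exactly $N(0,\sigma^2(\gamma))$, and the correction is removed by Slutsky's lemma. The whole argument rests on the decomposition
\[
\sqrt{n}(\wh{\mu}_n(\gamma) - \mu) = \gamma\sqrt{n}(\overline{X}_1-\mu) + (1-\gamma)\sqrt{n}(\overline{X}_2-\mu) + (\gamma_n - \gamma)\bigl[\sqrt{n}(\overline{X}_1-\mu) - \sqrt{n}(\overline{X}_2-\mu)\bigr].
\]

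First I would record the convergence of the weights. Writing $\wh{\theta}_n = (n_1/n, n_2/n, \wt{S}_1^2, \wt{S}_2^2, \overline{X}_1, \overline{X}_2)$, the law of large numbers together with (\ref{CondSampleSizes}) gives $\wh{\theta}_n \stackrel{P}{\to} \theta = (\lambda_1, \lambda_2, \sigma_1^2, \sigma_2^2, \mu, \mu)$, where consistency of $\wt{S}_i^2$ uses finiteness of the second moments. The delicate point is the discontinuous indicator $\eins(\wt{S}_1^2 \le \wt{S}_2^2)$ selecting between $\gamma^\le$ and $\gamma^>$. Here I would use that the variances are \emph{strictly} ordered: if $\sigma_1^2 < \sigma_2^2$ then $\wt{S}_1^2 - \wt{S}_2^2 \stackrel{P}{\to} \sigma_1^2 - \sigma_2^2 < 0$, so $P(\wt{S}_1^2 \le \wt{S}_2^2) \to 1$ and, with probability tending to one, $\gamma_n = \gamma^\le(\wh{\theta}_n)$; the symmetric statement holds when $\sigma_1^2 > \sigma_2^2$. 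Since $\gamma^\le$ and $\gamma^>$ are continuous (being three times continuously differentiable under Assumption $(\Gamma)$), the continuous mapping theorem yields $\gamma_n \stackrel{P}{\to} \gamma$.

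Next I would invoke the bivariate central limit theorem for the independent sample means. Because the two samples are independent and $n_i/n \to \lambda_i$, standard arguments give
\[
\begin{pmatrix} \sqrt{n}(\overline{X}_1 - \mu) \\ \sqrt{n}(\overline{X}_2 - \mu) \end{pmatrix}
\stackrel{d}{\to}
N\!\left( 0, \begin{pmatrix} \lambda_1^{-1}\sigma_1^2 & 0 \\ 0 & \lambda_2^{-1}\sigma_2^2 \end{pmatrix} \right),
\]
the off-diagonal entries vanishing by independence. In particular the fixed combination $\gamma\sqrt{n}(\overline{X}_1-\mu) + (1-\gamma)\sqrt{n}(\overline{X}_2-\mu)$ converges to $N(0, \sigma^2(\gamma))$ with $\sigma^2(\gamma)$ as in (\ref{AsVar}), while the difference $\sqrt{n}(\overline{X}_1-\mu) - \sqrt{n}(\overline{X}_2-\mu)$ is $O_P(1)$.

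Assembling the pieces, the last term of the decomposition is $o_P(1)\cdot O_P(1) = o_P(1)$ by the two preceding steps, so Slutsky's lemma delivers the claimed limit. The main obstacle is precisely the discontinuous selection in $\gamma_n$, which is overcome by the strict ordering of the variances forcing the indicator to stabilise at a deterministic value with probability tending to one; everything else reduces to a routine application of the CLT and Slutsky. It is worth noting that the full differentiability in Assumption $(\Gamma)$ is far more than the CLT requires — only continuity of $\gamma^\le, \gamma^>$ at $\theta$ together with strict ordering is used — so the stronger smoothness is there to serve the remainder estimates of Theorem~\ref{Theorem_Taylor} rather than this result.
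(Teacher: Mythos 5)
Your proof is correct and follows essentially the same route as the paper's: consistency of the random weight $\gamma_n$ at $\theta$, the joint CLT for the two independent sample means, and Slutsky's lemma; your decomposition into a deterministic-weight part plus a $(\gamma_n-\gamma)\cdot O_P(1)$ correction is just an algebraic rearrangement of the paper's $b_n(\wh{\theta}_n)'V_n \to b(\theta)'Z$ argument. The only differences are cosmetic: you are more explicit than the paper about why the indicator $\eins(\wt{S}_1^2\le\wt{S}_2^2)$ stabilises (strict ordering of the variances), while the paper spells out the Cram\'er--Wold/Lindeberg details of the bivariate CLT that you cite as standard.
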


\begin{proof} 
We consider the case $ \sigma_1^2 \le \sigma_2^2 $. Then $ \gamma(\cdot) = \gamma^\le(\cdot) $ is continuous, such that  
\[ b_n(\wh{\theta}_n) = ( \gamma(\wh{\theta}_n), 1-\wh{\gamma}(\wh{\theta}_n ) )' \to b(\theta) = (\gamma^\le(\theta), 1-\gamma^\le(\theta) )',
\]
as $ n \to \infty $ a.s.. Clearly,
\[ \sqrt{n} (\overline{X}_i - \mu ) \stackrel{d}{\to} N(0, \lambda_i^{-1} \sigma_i^2 ),\] 
as $ n \to \infty $, for $ i = 1, 2 $. Notice that $ n = n_1 + n_2 \to \infty $
and  $ n_i / n \to \lambda_i $, as $ n \to \infty $, imply
$ n_i = (n_i/n) n \to \infty $, as $n \to \infty $, for $ i = 1, 2 $. Let us show that
\begin{equation}
\label{CLT_Vn}
  V_n = \sqrt{n} \left( \begin{array}{cc} \overline{X}_1 - \mu \\ \overline{X}_2 - \mu \end{array} \right) \stackrel{d}{\to} Z = \left( \begin{array}{cc} \lambda_1^{-1/2} \sigma_1 Z_1 \\ \lambda_2^{-1/2} \sigma_2 Z_2 \end{array} \right),
\end{equation}
as $ n \to \infty $, where $ Z_1, Z_2 $ are independent standard normal random variables.
We apply the Cram\'er-Wold device. For $ \rho_1, \rho_2 \in \R $, we have
\begin{align*}
  T_n(\rho_1, \rho_2) & = \rho_1 \sqrt{n}( \overline{X}_1 - \mu ) + \rho_2 \sqrt{n}( \overline{X}_2 - \mu )  \\
  & = \rho_1 \sqrt{\frac{ n }{ n_1 } } \frac{1}{\sqrt{n_1}} \sum_{j=1}^{n_1} (X_{1j} - \mu )
  + \rho_2 \sqrt{\frac{ n }{ n_2 } } \frac{1}{\sqrt{n_2}} \sum_{j=1}^{n_2} (X_{2j} - \mu ) \\
  & = \sum_{j=1}^{n} \zeta_{ni} + o_P(1),
\end{align*}
where
\[
  \zeta_{ni} = \left\{
    \begin{array}{ll}
      \frac{1}{\sqrt{n}} \rho_1 \lambda_1^{-1}  ( X_{1i} - \mu ), & \qquad i = 1, \dots, n_1, \\
      \frac{1}{\sqrt{n}} \rho_2 \lambda_2^{-1}  ( X_{2,i-n_1} - \mu ), & \qquad i  = n_1+1, \dots, n,
    \end{array}
  \right.
\]
for $ i = 1, \dots, n $, since $ n_i^{-1/2} \sum_{j=1}^{n_i} (X_{ij} - \mu ) = O_P(1) $ and
$ | \sqrt{n/n_i} - \sqrt{\lambda_i^{-1}} | = o(1) $, as $ n \to \infty $, such that
\begin{align*}
 \left| T_n(\rho_1, \rho_2) - \sum_{j=1}^{n} \zeta_{ni} \right| 
 & \le \sum_{i=1,2} | \rho_i | 
   \left| 
     \sqrt{ \frac{n}{n_i} } - \sqrt{ \lambda_i } 
   \right| 
   \left| 
     \frac{1}{ \sqrt{n_i} } \sum_{j=1}^{n_i} ( X_{ij} - \mu ) 
   \right| \\
   & = o_P(1),
\end{align*}
as $ n \to \infty $. Observe that, for each $ n \in \N $, the random variables $ \zeta_{n1}, \dots, \zeta_{nn} $ are mean zero and independent with $ \sup_{n \ge 1} \max_{1 \le i \le n} \Var( \zeta_{ni} ) < \infty $. Further,
\begin{align*}
  \sum_{i=1}^n \Var( \zeta_{ni} ) & =  \rho_1^2 \frac{n_1}{n} \lambda_1^{-2} \sigma_1^2 +   \rho_1^2 \frac{n_2}{n} \lambda_2^{-2} \sigma_2^2 \\
  & \to \rho_1^2 \lambda_1^{-1} \sigma_1^2 + \rho_2^2 \lambda_2^{-1} \sigma_2^2,
\end{align*}
as $ n \to \infty $. Hence, the Lindeberg condition is satisfied, such that the CLT implies
\[
  T_n(\rho_1, \rho_2) \stackrel{d}{\to} N( 0, \rho_1^2 \lambda_1^{-1} \sigma_1^2 + \rho_2^2 \lambda_2^{-1} \sigma_2^2 ),
\]
as $ n \to \infty $. This verifies (\ref{CLT_Vn}). Now the assertions follows by an application of Slutzky's lemma:
\[
  \sqrt{n}( \wh{\mu}_n(\gamma) - \mu ) = b_n( \wh{\theta}_n )' V_n \stackrel{d}{\to} b(\theta)' Z \sim N(0, \gamma^\le(\theta)^2 \lambda_1^{-1} \sigma_1^2 + (1-\gamma^\le(\theta))^2 \lambda_2^{-1} \sigma_2^2  ),
\]
as $ n \to \infty $, where $ \sigma^2( \gamma ) $ is given in (\ref{AsVar}). 
\end{proof}

\begin{remark}
Under the assumptions of Theorem~\ref{Theorem_Taylor} the CLT follows
directly from the asymptotic representation as a linear statistic shown there. 
\end{remark}

The CLT suggest the following estimator for the variance of the common mean
estimator $ \wh{\mu}_n(\gamma) $:
\begin{equation}
\label{CLTVarEst}
  \wh{\Var}(\wh{\mu}_n(\gamma)) = \frac{1}{n} \left[ \wh{\gamma}_n^2 \frac{n}{n_1} S_1^2 + (1-\wh{\gamma}_n )^2  \frac{n}{n_2} S_2^2 \right], 
\end{equation}
where $ \wh{\gamma}_n = \gamma^{\le}(n_1/n, n_2/n, S_1^2, S_2^2 ) $, if
$ \wt{S}_1^2 \le \wt{S}_2^2 $, and $ = \gamma^{>}(n_1/n, n_2/n, S_1^2, S_2^2 ) $ otherwise. The alternative formula
\[
  \wh{\Var}(\wh{\mu}_n(\gamma)) = \wh{\gamma}_n^2   \frac{S_1^2}{n_1} + (1-\wh{\gamma}_n )^2  \frac{S_2^2}{n_2} 
\]
shows how the standard errors $ S_i^2/n_i $,  $ i = 1, 2 $, of the samples
are weighted with the sample fractions $ n_i/n $, $ i = 1, 2 $, and the squared convex weights.

\section{Simulations and Data Analysis}
\label{sec: data_analysis}

We analyzed three data sets to illustrate both the method and its wide applicability. The data come from the fieds of natural science (physics), technology (quality engineering) and social information.

\subsection{Simulation study}

The simulation study aims at comparing the accuracy of the jackknife with the accuracy of the bootstrap taking into account the computational costs. Bootstrapping is a commonly used tool 
Recall that the nonparametric bootstrap draws $B$ random samples of sizes  $ n_1 $ and $ n_2 $ from the given data sets and then estimates the variance of a test statistics, the GD estimator in our study, by the sample variance of the $B$ replicates of the statistic. We consider the balanced design where $ N = n_1 = n_2 $. In this case the computational costs of the bootstrap, measured as the number of times the test statistic has to be evaluated, are equal to the costs of the jackknife, if $ B = N $. In our study we investigate the cases $ N = 25, 50, 75 $ and $ B = 100, 200, \dots, 1000 $, such that the computational costs of the bootstrap relative to those of the jackknife range from a factor of $4/3$ to $ 40 $.

The simulations consider normally distributed data (model 1), the $ t(5) $--distribution (model 2) as a distribution with fat tails, the $ U(-5,5) $-distribution as a short-tailed law and $\Gamma(a,b) $-distributed samples with mean $ a b $ and variance $ a b^2 $ (models 4 and 5) leading to skewed distributions. For model 4 observations distributed as $ \mu + \sigma_i ( \Gamma(a,\sigma_i) - a \sigma_i) $, $ i = 1, 2 $, were simulated with $ a = 1.5 $. Model 5 uses $ a = 2.5 $. The common mean equals $ \mu = 10 $.
The results are provided in Table~\ref{Sims}, which shows the coverage probabilities of the confidence intervals calculated based on the corresponding variance estimate for a nominal confidence level of $ 95\% $. Each entry is based on $ S = 20,000 $ runs.
 
 It can be seen that in all cases studied here the accuracy of the jackknife condifence interval in terms of the coverage probability is substantially higher than for the bootstrap intervals, which suffer from a lower than nominal coverage. Although the improvement, of course, diminishes for larger values of $B$, the bootstrap intervals are even worse for $ B = 1000 $. The results therefore demonstrate that jackknifing is a highly efficient tool when it comes to calculating confidence intervals on a large scale where the computational costs matter, as it is the case when analyzing big data.

\begin{table}
	\centering
	\begin{tabular}{rrrrrrrrrrrrr}
		\hline
		Model & N & Jack & 100 & 200 & 300 & 400 & 500 & 600 & 700 & 800 & 900 & 1000 \\ 
		\hline
		 1 &   25 & 0.946 & 0.935 & 0.938 & 0.932 & 0.937 & 0.933 & 0.938 & 0.936 & 0.938 & 0.940 & 0.937 \\ 
		 1 &   50 & 0.951 & 0.941 & 0.942 & 0.943 & 0.944 & 0.943 & 0.946 & 0.946 & 0.945 & 0.944 & 0.941 \\ 
		 1 &   75 & 0.951 & 0.942 & 0.943 & 0.945 & 0.947 & 0.945 & 0.946 & 0.945 & 0.946 & 0.945 & 0.944 \\ 
		 2 &   25 & 0.949 & 0.934 & 0.936 & 0.934 & 0.935 & 0.937 & 0.938 & 0.937 & 0.935 & 0.939 & 0.935 \\ 
		 2 &   50 & 0.949 & 0.941 & 0.942 & 0.941 & 0.940 & 0.943 & 0.943 & 0.942 & 0.944 & 0.942 & 0.941 \\ 
		 2 &   75 & 0.949 & 0.941 & 0.945 & 0.943 & 0.946 & 0.943 & 0.944 & 0.946 & 0.944 & 0.943 & 0.946 \\ 
		 3 &   25 & 0.946 & 0.940 & 0.940 & 0.940 & 0.941 & 0.937 & 0.937 & 0.940 & 0.941 & 0.943 & 0.942 \\ 
		 3 &   50 & 0.948 & 0.941 & 0.944 & 0.946 & 0.947 & 0.944 & 0.945 & 0.943 & 0.946 & 0.945 & 0.944 \\ 
		 3 &   75 & 0.948 & 0.946 & 0.947 & 0.944 & 0.947 & 0.948 & 0.949 & 0.950 & 0.949 & 0.948 & 0.946 \\ 
 4 &   25 & 0.921 & 0.908 & 0.915 & 0.912 & 0.911 & 0.910 & 0.913 & 0.916 & 0.912 & 0.913 & 0.912 \\ 
 4 &   50 & 0.933 & 0.928 & 0.926 & 0.930 & 0.924 & 0.931 & 0.931 & 0.931 & 0.926 & 0.928 & 0.928 \\ 
 4 &   75 & 0.940 & 0.932 & 0.934 & 0.933 & 0.937 & 0.937 & 0.935 & 0.937 & 0.933 & 0.934 & 0.934 \\ 
 5 &   25 & 0.930 & 0.918 & 0.919 & 0.920 & 0.918 & 0.919 & 0.917 & 0.922 & 0.923 & 0.920 & 0.920 \\ 
 5 &   50 & 0.938 & 0.932 & 0.929 & 0.933 & 0.936 & 0.931 & 0.934 & 0.935 & 0.932 & 0.932 & 0.935 \\ 
 5 &   75 & 0.942 & 0.935 & 0.936 & 0.938 & 0.940 & 0.938 & 0.939 & 0.936 & 0.937 & 0.939 & 0.939 \\ 		\hline
	\end{tabular}
\caption{Accurary of the jackknife and the bootstrap variance estimators in terms of the coverage probability for the confidence level $ 0.95 $.}
\label{Sims}
\end{table}

\subsection{Physics: Acceleration due to gravity}

In physics the common mean model frequently applies when observing or measuring a time-invariant physical phenomon. As an example, we analyze the Heyl and Cook measurements of the acceleration due to gravity,
\cite{HC36}.
Two of those data sets, taken from \cite{Cressie97}, are given by $ x_1 = (78, 78, 78, 86, 87, 81, 73, 67, 75, 82, 83)' $
and $ x_2 = (84, 86, 85, 82, 77, 76, 80, 83, 81, 78, 78, 78) $. 

These measurements are
	deviations from the value $ g = 980,060 \cdot 10^3 cm/sec^2 $. The classical $F$-test for homogeneity of variances accepts the null hypothesis of equal variances. The common mean assumption is obviously satisfied, since the same physical (invariant) phenomen is measured; systematic errors can be excluded from considerations, due to the great amount of care taken in the experiments to avoid systematic errors. 
The Graybill-Deal estimator for this data is $ \wh{\mu}^{(GD)} = 80.26123 $.
In order to estimate its variance and set up confidence intervals valid also under non-normal underlying distributions, we used the asymptotic approach based on the CLT, i.e. (\ref{CLTVarEst}), and the jackknife for unequal sample sizes. Table~\ref{TabPhysicsGD} provides those  estimates and the resulting confidence intervals for a confidence level of $ 95\% $ for the common mean based on the central limit theorem. 

\begin{table}
\begin{center}
	\begin{tabular}{lclr} \hline
	Method &	Est. sd of $ \wh{\mu}^{(GD)} $ & Confidence Interval & Width \\ \hline
	Asymptotics & $ 0.8455307$ & $ [  78.60399 ,  81.91847  ] $ & $ 3.31448 $\\
	Jackknife & $0.8492987$ & $ [  78.5966 ,  81.92585  ] $ & $  3.329251$ 
    \\ \hline
	\end{tabular}
\end{center}
\caption{Common mean estimation using the GD estimator: Standard errors and confidence intervals using asymptotics and the jackknife, respectively.}
\label{TabPhysicsGD}
\end{table}

It can be seen that the jackknife and the asymptotic formula lead to quite similar results for the GD estimator. Table~\ref{TabPhysicsNair} showsthe results for Nair's estimator:

\begin{table}
\begin{center}
	\begin{tabular}{lclr} \hline
		Method &	Est. sd of $ \wh{\mu}^{(N)} $ & Confidence Interval & Width \\ \hline
		Asymptotics & $ 1.279913 $ & $[  77.31746 ,  81.91847  ] $ & $ 5.017259 $\\
		Jackknife & $0.9752919$ & $ [  77.91451 ,  81.92585  ] $ & $  3.823144 $ 
		\\ \hline
	\end{tabular}
\end{center}
\caption{Common mean estimation using Nair's estimator:  Standard errors and confidence intervals using asymptotics and the jackknife, respectively.}
\label{TabPhysicsNair}
\end{table}

Nair's estimator uses the weights $ n_1/n = 11/23$ and $ n_2/n =12/23 $, since the variance estimates $ s_1^2 = 34.09091 $ and $ s_2^2 = 11.15152 $ are not ordered, which substantially from the stochastic weights used by the GD estimator. The jackknife variance estimate provides a tighter confidence  interval than the asymptotic approach in this case.

\subsection{Technology: Chip production data}

Chips as used in electronic circuits are cut using a cutting machine. The critical quantity is the width of cut out chips.
Figure~\ref{Figure1} depicts kernel density estimates (with cross-validated bandwidth selection) and boxplots of two series of $ n = m = 240$ of width measurements using different cutting saws. Both data sets are non-normal, as confirmed by  Shapiro-Wilk tests leading to p-values $ < 10^{-3} $, and there are some outliers present which are, however, not treated for the present analysis.
The sample means  are $ \overline{x} = 6.293254 $ and $ \overline{y} = 6.292667 $ with estimated standard deviations $ s_x = 0.003785844 $ and $ s_y = 0.004962341 $. 
Table~\ref{TabChipsNair} provides the results when assuming $ \sigma_1^2 < \sigma_2^2 $.

\begin{table}
\begin{center}
	\begin{tabular}{lclr} \hline
		Method &	Est. sd of $ \wh{\mu}^{(N)} $ & Confidence Interval & Width \\ \hline
		Asymptotics & $ 0.0001942891 $ & $ [  6.292657 ,  6.293419  ] $ & $ 0.0007616134 $\\
		Jackknife & $ 0.0001941049 $ & $ [  6.292658 ,  6.293418  ] $ & $  0.000760891 $ 
		\\ \hline
	\end{tabular}
\end{center}
\caption{Nair's estimator for the chip data:  Standard errors and confidence intervals using asymptotics and the jackknife, respectively.}
\label{TabChipsNair}
\end{table}

When interchanging the samples, such that the variance estimates are not ordered, the results for Nair's estimator differ, see Table~\ref{TabChipsNairChanged}. Now the jackknife results in a substantially smaller variance estimate and a tighter confidence interval compared to the approach based on the asymptotics.

\begin{table}
\begin{center}
	\begin{tabular}{lclr} \hline
		Method &	Est. sd of $ \wh{\mu}^{(N)} $ & Confidence Interval & Width \\ \hline
		Asymptotics & $ 0.0002921816 $ & $ [  6.292388 ,  6.293419  ] $ & $ 0.001145352  $\\
		Jackknife & $ 0.0001984456 $ & $ [  6.292571 ,  6.293418  ] $ & $  0.0007779067 $ 
		\\ \hline
	\end{tabular}
\end{center}
\caption{Results for Nair's estimator for the chip data when switching the samples.}
\label{TabChipsNairChanged}
\end{table}

\begin{center}
	\begin{figure}
			\includegraphics[width=7cm]{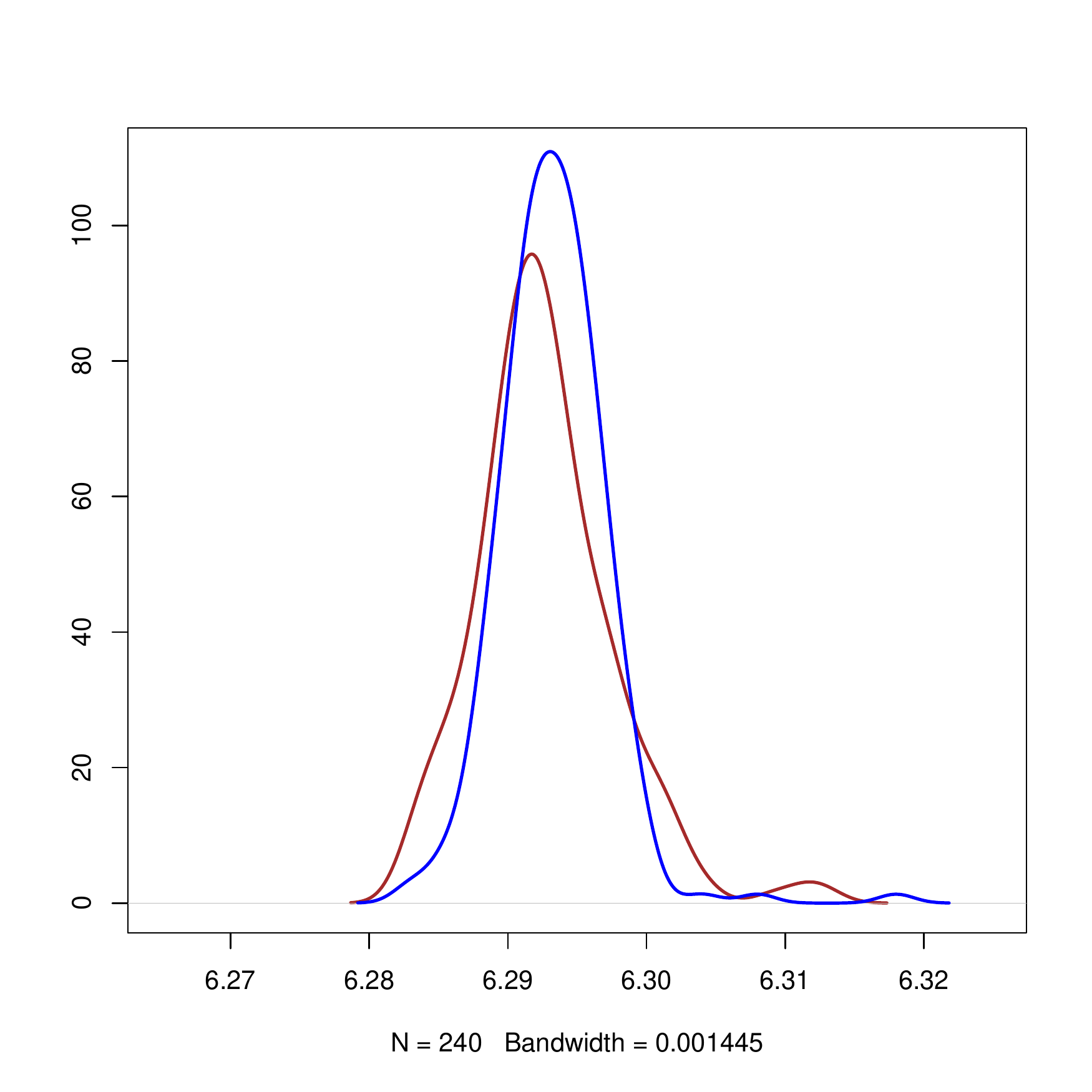}
			\includegraphics[width=7cm]{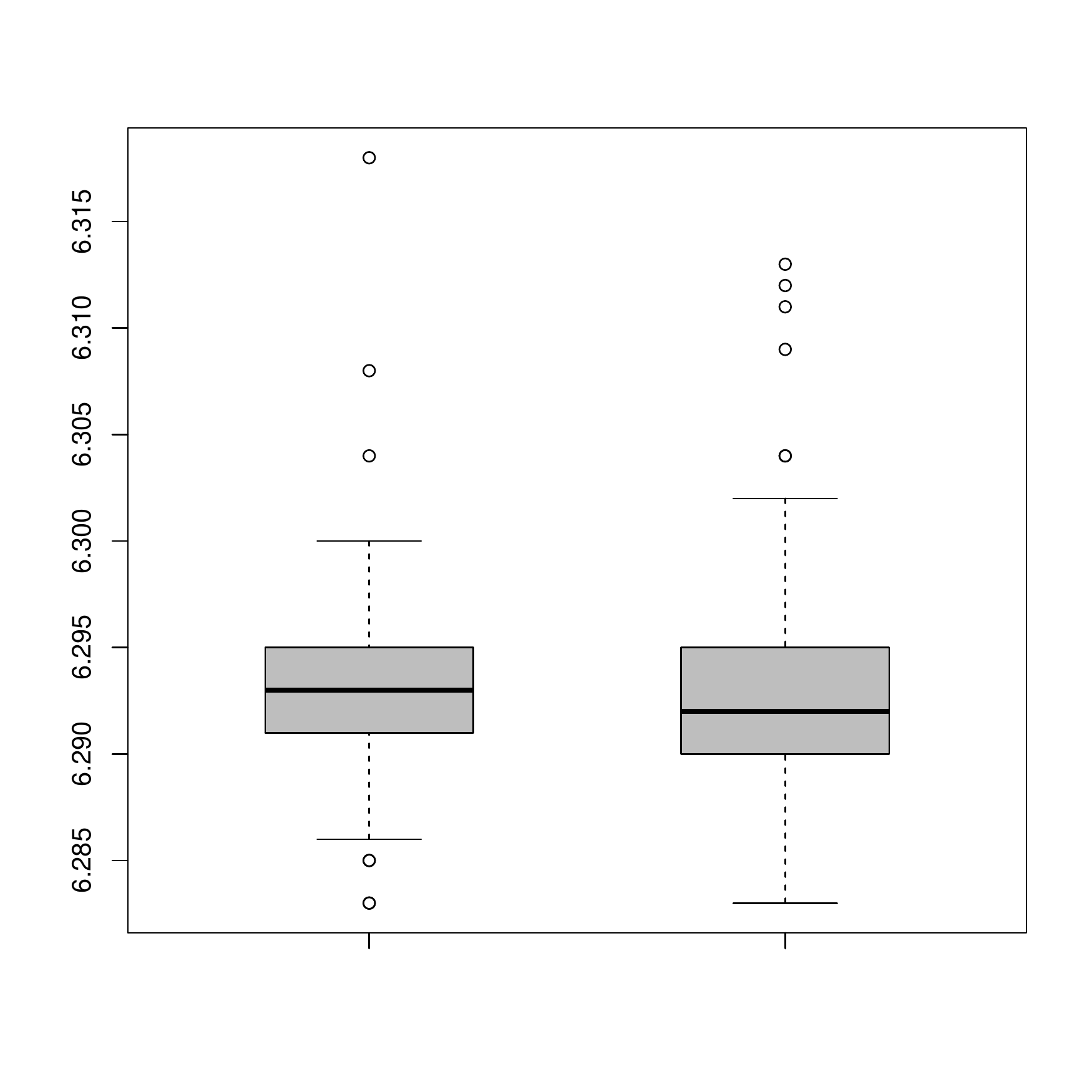}
	\caption{Density estimates and boxplots of width measurements of sliced chips using two cutting saws.}
	\label{Figure1}
	\end{figure}
\end{center}

\subsection{Social Information: Japanese child data}

In Japan the strength of eight years old boys and girls was investigated in the six prefectures (Aomori, Iwate, Miyagi, Akita, Yamagata and Fukushima) 
of the Touhoku region on Honshu, the largest island of Japan, and the prefecture Hokkaido located in the north of Japan. The observations for the boys are $ 52.55, 54.08, 54.25, 52.92, 56.31, 53.63, 52.52 $ and those for girls $ 52.95, 55.72, 56.14, 54.24, 58.19, 55.32, 54.45 $. 
For those data sets the Graybill-Deal estimator is given by $ 54.34878 $. 

Table~\ref{TabChildGD} provides the results when assigning the girls' data to $x_1 $ and the boys' data to $ x_2 $. The results for the GD estimator are as follows.

\begin{table}
\begin{center}
	\begin{tabular}{lclr} \hline
		Method &	Est. sd of $ \wh{\mu}^{(GD)} $ & Confidence Interval & Width \\ \hline
		Asymptotics & $ 0.3921168 $ & $[  53.58023 ,  55.11733  ] $ & $ 1.537098$\\
		Jackknife & $0.6874476 $ & $ [  53.00139 ,  55.69618  ] $ & $  2.694794 $ 
		\\ \hline
	\end{tabular}
\end{center}
\caption{Results for the GD common estimator for the Japanese child data: Standard errors and confidence intervals using asymptotics and the jackknife.}
\label{TabChildGD}
\end{table}

For Nair's estimator we obtain different results, since the variance estimates are not ordered, see Table~\ref{TabChildNair}. As for the chip data above, the jackknife leads to a smaller variance estimate and a tighter confidence interval.

\begin{table}
\begin{center}
	\begin{tabular}{lclr} \hline
		Method &	Est. sd of $ \wh{\mu}^{(N)} $ & Confidence Interval & Width \\ \hline
		Asymptotics & $  0.5919936 $ & $[  53.35898 ,  55.11733  ]$ & $ 2.320615$\\
	Jackknife & $0.5593932 $ & $ [  53.42288 ,  55.69618  ]$ & $  2.192821  $ 
	\\ \hline
\end{tabular}
\end{center}
\caption{The Nair estimator for Japanese child data: Standard errors and confidence intervals using asymptotics and the jackknife.}
\label{TabChildNair}
\end{table}

\section*{Acknowledgments}

Parts of this paper have been written during research visits of the first author at Mejiro University, Tokyo. He thanks for the warm hospitality. Both authors thank Hideo Suzuki, Keio University at Yokohama, for invitations to his research seminar and the participants for comments and discussion. The authors gratefully acknowlegde the support of Prof. Takenori Takahashi, Mejiro University and Keio University Graduate School, and Akira Ogawa, Mejiro University, for providing and discussing the chip manufacturing data. They thank anonymous referees for helpful comments.

\appendix

\section{Miscellaneous results}

\begin{lemma} 
\label{SampleVarAsLinear}
Let $ \xi_1, \dots, \xi_n $ be i.i.d. random variables \underline{with $ E |\xi_1|^{12} < \infty $.} Then the sample variance $ S_n^2 = \frac{1}{n}\sum_{i=1}^n (\xi_i - \overline{\xi})^2 $ is asymptotically linear,
\begin{equation}
\label{SampleVarianceAsLinear}
  S_n^2 - \sigma^2 = \frac{1}{n} \sum_{i=1}^n (\wt{\xi}_i^2 - \sigma^2) - R_n,
\end{equation}
where $ \wt{\xi}_i = \xi_i - E( \xi_i ) $, $ 1 \le i \le n $, and the remainder term 
$ R_n = ( \frac{1}{n} \sum_{i=1}^n (X_i-\mu))^2 $ satisfies
\begin{equation}
\label{PropRemainderSampleVar}
  E ( n R_n^6 ) = O(n^{-5}), \quad E( n R_n^4 ) = O( n^{-3} ), \quad E( n R_n^2 ) = O(n^{-1}),
\end{equation}
and
\begin{equation}
\label{HigherOrderApproxRemainder}
  E( R_n - R_{n-1} )^2 = O( n^{-3} ), \ E( R_n - R_{n-1} )^4 = O(n^{-6} ),
\end{equation}
as $ n \to \infty $. The assertions also hold for the unbiased variance estimator $ \wt{S}_n^2 = \frac{n}{n-1} S_n^2 $.
\end{lemma}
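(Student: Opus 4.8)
The plan is to reduce every claim to even moments of the centered sample mean and then invoke a single combinatorial moment bound, handling the increments $R_n - R_{n-1}$ by a telescoping identity combined with Cauchy--Schwarz. First I would establish the decomposition: with $\mu = E(\xi_1)$ and $\wt\xi_i = \xi_i - \mu$, the identity $S_n^2 = \frac1n\sum_{i=1}^n \xi_i^2 - \overline\xi^2$ together with $\xi_i = \wt\xi_i + \mu$ makes the terms linear in $\mu$ cancel, leaving
\[ S_n^2 - \sigma^2 = \frac1n\sum_{i=1}^n(\wt\xi_i^2 - \sigma^2) - M_n^2, \qquad M_n := \frac1n\sum_{i=1}^n \wt\xi_i. \]
This is exactly (\ref{SampleVarianceAsLinear}) with $R_n = M_n^2$, so that $R_n^2 = M_n^4$, $R_n^4 = M_n^8$ and $R_n^6 = M_n^{12}$; the three estimates in (\ref{PropRemainderSampleVar}) are therefore statements about the fourth, eighth and twelfth moments of $M_n$.

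Second I would record the moment bound $E\,M_n^{2k} = O(n^{-k})$, valid whenever $E|\xi_1|^{2k} < \infty$, which is the quantitative heart of the matter. It follows from the Marcinkiewicz--Zygmund (or Rosenthal) inequality, or directly by expanding $E\bigl(\sum_{i=1}^n \wt\xi_i\bigr)^{2k}$: since the $\wt\xi_i$ are independent and centered, only index multi-sets in which every value is repeated at least twice survive, the dominant contribution coming from the $k$-fold pairings and being of order $n^k$, so that division by $n^{2k}$ yields $O(n^{-k})$. Applying this with $k = 2,4,6$ gives $E(nR_n^2) = O(n^{-1})$, $E(nR_n^4) = O(n^{-3})$ and $E(nR_n^6) = O(n^{-5})$; the case $k=6$ is precisely where the hypothesis $E|\xi_1|^{12} < \infty$ is consumed.

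Third, for the increments I would use the i.i.d. structure to take $R_{n-1} = M_{n-1}^2$ with $M_{n-1} = \frac1{n-1}\sum_{i=1}^{n-1}\wt\xi_i$, and the elementary identity $M_n - M_{n-1} = (\wt\xi_n - M_{n-1})/n$, which gives
\[ R_n - R_{n-1} = (M_n - M_{n-1})(M_n + M_{n-1}) = \frac1n(\wt\xi_n - M_{n-1})(M_n + M_{n-1}). \]
Squaring, respectively taking the fourth power, and applying Cauchy--Schwarz separates the expectation into a factor in $\wt\xi_n - M_{n-1}$, which is $O(1)$, and a factor in $M_n + M_{n-1}$, which by the second step is $O(n^{-1})$, respectively $O(n^{-2})$; together with the prefactor $n^{-2}$, respectively $n^{-4}$, this delivers $E(R_n - R_{n-1})^2 = O(n^{-3})$ and $E(R_n - R_{n-1})^4 = O(n^{-6})$, i.e. (\ref{HigherOrderApproxRemainder}). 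Note that these increment bounds only use the eighth moment; it is the point bound $E(nR_n^6)$ that forces the twelfth.

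Finally, for the unbiased estimator I would write $\wt S_n^2 - \sigma^2 = \frac{n}{n-1}(S_n^2 - \sigma^2) + \frac{\sigma^2}{n-1}$ and absorb the bounded factor $\frac{n}{n-1} = 1 + O(n^{-1})$ and the deterministic $O(n^{-1})$ term into a redefined remainder; since the extra contributions are a bounded multiple of the already controlled linear and remainder terms plus deterministic $O(n^{-1})$ quantities, they obey the same bounds, and the analogous bookkeeping handles their increments. I expect the second step --- the uniform $O(n^{-k})$ control of the even moments of $M_n$, and in particular pinning the exponent to exactly $-k$ --- to be the only genuinely technical point; everything else is an algebraic reduction or a routine Cauchy--Schwarz estimate.
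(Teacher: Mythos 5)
Your proof is correct and takes essentially the same route as the paper: the identical decomposition with $R_n = \bigl(\tfrac1n\sum_{i}\wt{\xi}_i\bigr)^2$, the moment bound $E\bigl(\sum_i \wt{\xi}_i\bigr)^{2k} = O(n^{k})$ applied with $k=2,4,6$ for (\ref{PropRemainderSampleVar}), and Cauchy--Schwarz for the increment bounds (\ref{HigherOrderApproxRemainder}). The only cosmetic difference is that you factor $M_n^2 - M_{n-1}^2 = (M_n - M_{n-1})(M_n + M_{n-1})$ with $M_n - M_{n-1} = (\wt{\xi}_n - M_{n-1})/n$, whereas the paper expands the difference of the two squared averages directly into a coefficient-difference double sum plus a cross term in $\wt{\xi}_n$; both reductions feed exactly the same moment estimates and consume the twelfth moment only in the bound $E(nR_n^6)=O(n^{-5})$.
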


\begin{proof}[Proof of Lemma~\ref{SampleVarAsLinear}]
A direct calculation shows that
\begin{align*}
  S_n^2 - \sigma^2 &=
  \frac{1}{n} \sum_{i=1}^n ( \wt{\xi}_i^2 - \sigma^2 ) - ( \overline{\wt{\xi}} )^2, 
\end{align*}
with $ R_n = ( \overline{\wt{\xi}} )^2 $. Using the fact that 
$ E( \sum_{i=1}^n \wt{\xi}_i )^r = O( n^{r/2} ) $, if $ E | \xi_i |^r < \infty $, the
estimates in (\ref{PropRemainderSampleVar}) follow easily, i.e. 
\[
  E( n R_n^2 ) = n n^{-4} E\left( \sum_{i=1}^n \wt{\xi}_i \right)^{4}  = O(n^{-1}),
\]
\[
  E( n R_n^4 ) = O( n^{-3} )
\]
and
\[
  E( n R_n^6 ) = n n^{-12} E\left( \sum_{i=1}^n \wt{\xi}_i \right)^{12} = O( n^{-5} ).
\]
To show (\ref{HigherOrderApproxRemainder}) observe that
\begin{align*}
  R_n - R_{n-1} &= \left( \frac1n \sum_{j=1}^n \wt{\xi}_j  \right)^2 -
  \left( \frac{1}{n-1} \sum_{j=1}^{n-1} \wt{\xi}_j  \right)^2 \\
   & =\left( \frac{1}{n^2} - \frac{1}{(n-1)^2} \right) \sum_{j,j'<n} \wt{\xi}_j \wt{\xi}_{j'} + \frac{2}{n^2} \sum_{j=1}^n \wt{\xi}_n \wt{\xi}_j.
\end{align*}
By virtue of the $ C_r $-inequality,
\[
  E( R_n - R_{n-1} )^r \le 2^{r-1}  \left\{ E    \left( 
  \sum_{j,j'=1}^{n-1} \frac{-2n+1}{n^2(n-1)^2} \wt{\xi}_j \wt{\xi}_{j'}
  \right)^r
  +
  E \left(
    \frac{2}{n^2} \sum_{j=1}^n \wt{\xi}_n \wt{\xi}_j
  \right)^r \right\},
\]
for $ r = 2, 4 $. Observe that
\begin{align*}
  E \left(
  \frac{2}{n^2} \sum_{i=1}^n \wt{\xi}_n \wt{\xi}_j
  \right)^2
  & \le \frac{4}{n^4} \sqrt{ E \wt{\xi}_n^4 } \sqrt{ E \left( \sum_{j=1}^n \wt{\xi}_j \right)^4 } 
  = O( n^{-3} )
\end{align*}
and
\begin{align*}
  E \left(
   \sum_{j,j'=1}^{n-1} \frac{-2n+1}{n^2(n-1)^2} \wt{\xi}_j \wt{\xi}_{j'}    
  \right)^2
  & = \left( \frac{-2n+1}{n^2(n-1)^2}   \right)^2 E \left(\sum_{j=1}^{n-1} \wt{\xi}_j \right)^4
  = O( n^{-4} ),
\end{align*}
such that
\[
  E( R_n - R_{n-1})^2 = O( n^{-3} ).
\]
Similarly,
\begin{align*}
E \left(
\frac{2}{n^2} \sum_{j=1}^n \wt{\xi}_n \wt{\xi}_j
\right)^4
& \le \frac{2^4}{n^8} \sqrt{ E \wt{\xi}_n^8 } \sqrt{ E \left( \sum_{j=1}^n \wt{\xi}_j \right)^8 } 
= O( n^{-8} ) O( n^2 ) = O(n^{-6})
\end{align*}
and
\begin{align*}
  E \left(  
   \sum_{j,j'=1}^{n-1} \frac{-2n+1}{n^2(n-1)^2} \wt{\xi}_j \wt{\xi}_{j'}
   \right)^4
   & = \left( \frac{-2n+1}{n^2(n-1)^2}   \right)^4 E \left(\sum_{j=1}^{n-1} \wt{\xi}_j \right)^8 \\
   & = O( n^{-12} ) O( n^4 ) = O(n^{-8})
\end{align*}
leading to
\[
  E(R_n - R_{n-1})^4 = O(n^{-6} ).
\]
The additional arguments to treat $ \wt{S}_n^2 $ are straightforward and omitted.
\end{proof}

\begin{lemma}
\label{EstimationVariances}
	Suppose that $ X_{ij}, j = 1, \dots, n_i $, are i.i.d. with finite eight moments and strictly ordered variances $ \sigma_1^2 < \sigma_2^2 $. Then
	\[
	P( S_1^2 > S_2^2 ) \le C n^{-2}, 
	\]
	for some constant $ C $.
\end{lemma}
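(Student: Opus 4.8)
The plan is to show that the event of wrongly ordered sample variances forces at least one sample variance to deviate substantially from its population value, and then to bound that deviation probability by a fourth-moment argument, which is exactly the order needed to reach the rate $n^{-2}$. Set $\delta = \sigma_2^2 - \sigma_1^2 > 0$. On the event $\{ S_1^2 > S_2^2 \}$ we have $(S_1^2 - \sigma_1^2) - (S_2^2 - \sigma_2^2) > \delta$, so that at least one of the deviations $|S_i^2 - \sigma_i^2|$ must exceed $\delta/2$. This gives the deterministic inclusion
\[
  \{ S_1^2 > S_2^2 \} \subseteq \{ |S_1^2 - \sigma_1^2| \ge \delta/2 \} \cup \{ |S_2^2 - \sigma_2^2| \ge \delta/2 \},
\]
and by the union bound it suffices to control each $P(|S_i^2 - \sigma_i^2| \ge \delta/2)$ separately.

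Next I would apply Markov's inequality with the fourth power, since a second-moment (Chebyshev) bound would only yield the rate $n^{-1}$, whereas the fourth power delivers the required $n^{-2}$:
\[
  P( |S_i^2 - \sigma_i^2| \ge \delta/2 ) \le (2/\delta)^4\, E( S_i^2 - \sigma_i^2 )^4, \qquad i = 1,2.
\]
The problem thus reduces to the moment estimate $E(S_i^2 - \sigma_i^2)^4 = O(n_i^{-2})$. Here I would use the elementary decomposition underlying Lemma~\ref{SampleVarAsLinear}, namely $S_{n_i}^2 - \sigma_i^2 = \frac{1}{n_i}\sum_{j=1}^{n_i}(\wt{\xi}_{ij}^2 - \sigma_i^2) - R_{n_i}$ with $\wt{\xi}_{ij} = X_{ij} - \mu_i$ and $R_{n_i} = (\tfrac{1}{n_i}\sum_j \wt{\xi}_{ij})^2$. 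The $C_r$-inequality then gives
\[
  E(S_{n_i}^2 - \sigma_i^2)^4 \le 8\Bigl[ E\Bigl( \tfrac{1}{n_i}\sum_{j=1}^{n_i}(\wt{\xi}_{ij}^2 - \sigma_i^2) \Bigr)^4 + E(R_{n_i}^4) \Bigr].
\]
The first term is the fourth moment of an average of $n_i$ i.i.d.\ centred summands, which equals $O(n_i^{-2})$ precisely because those summands have a finite fourth moment under the assumed finiteness of $E X_{ij}^8$; the second term is $E(R_{n_i}^4) = E(\overline{\wt{\xi}}_i)^8 = O(n_i^{-4})$, which is the order-four remainder estimate from Lemma~\ref{SampleVarAsLinear} and, for this fourth-power bound alone, needs only the eighth moments assumed here. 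Hence $E(S_i^2 - \sigma_i^2)^4 = O(n_i^{-2})$.

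Finally, in the regime of interest the two sample sizes are proportional to $n$ (in the balanced case $n_1 = n_2 = N$ one simply has $n_i^{-2} = 4 n^{-2}$, and more generally \eqref{CondSampleSizes} gives $n_i^{-2} = O(n^{-2})$), so combining the two tail estimates yields $P(S_1^2 > S_2^2) \le C n^{-2}$ with a constant $C$ depending only on $\delta$ and the eighth moments. The only genuinely technical point is the fourth-moment bound $E(S_i^2 - \sigma_i^2)^4 = O(n_i^{-2})$; I do not expect a real obstacle there, since the needed moment estimates are already packaged in Lemma~\ref{SampleVarAsLinear}, and the eighth-moment hypothesis is exactly what makes the fourth-power Markov bound decay at the rate $n^{-2}$.
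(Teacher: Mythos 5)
Your proof is correct and follows essentially the same route as the paper's: both rest on a fourth-moment Markov/Chebyshev bound combined with the linear-plus-remainder decomposition of $S_{n_i}^2-\sigma_i^2$ from Lemma~\ref{SampleVarAsLinear}, yielding $\|S_i^2-\sigma_i^2\|_{L_4}=O(n_i^{-1/2})$ and hence the rate $n^{-2}$. The only cosmetic difference is that you split the event by a union bound over the two samples, whereas the paper applies Markov's inequality directly to $S_1^2-S_2^2-(\sigma_1^2-\sigma_2^2)$ and uses the $L_4$ triangle inequality.
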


\begin{proof}
	By Lemma~\ref{SampleVarAsLinear} we have
	\[
	S_{i}^2 - \sigma_i^2 = L_{ni} - R_{ni}, 
	\]
	with $ L_{ni} = \frac{1}{n_i} \sum_{j=1}^{n_i} [ ( X_{ij}- \mu)^2 - \sigma_i^2 ] $
	and $ R_{ni} = ( \frac{1}{n_i} \sum_{j=1}^{n_i} (X_{ij}-\mu) )^2 $, for $ i = 1, 2 $.  For $ r = 1, 2 $
	we have
	\[
	E | L_{ni} |^{2r} = O( n_i^{-r} ) \qquad \text{and} \qquad E | R_{ni} |^{2r} = O(n_i^{-2r}).
	\]
	Hence
	\[
	\| S_1^2 - S_2^2 - (\sigma_1^2-\sigma_2^2) \|_{L_4} \le 
	\| S_1^2 - \sigma_1^2 \|_{L_4} + \| S_2^2 - \sigma_2^2 \|_{L_4} 
	= O( n_1^{-1/2} ) + O( n_2^{-1/2} ) 
	= O( n^{-1/2} ).
	\] 
	Now we may conclude that
	\begin{align*}
	P( S_1^2 > S_2^2 ) & \le P( |S_1^2 - S_2^2 - (\sigma_2^2 - \sigma_1^2) | \ge (\sigma_2^2 - \sigma_1^2)  ) \\
	& \le \frac{ E | S_1^2 - \sigma_1^2 - (S_2^2 - \sigma_2^2) |^4  }{ | \sigma_2^2 - \sigma_1^2 |^4  } \\
	& \le \frac{ ( \| S_1^2 - \sigma_1^2 \|_{L_4} + \| S_2^2 - \sigma_2^2 \|_{L_4} )^4 }{  | \sigma_2^2 - \sigma_1^2 |^4 } \\
	& = O( n^{-2} ).
	\end{align*}
\end{proof}

\begin{lemma}
	\label{AppLemma1}	
	If $ \xi_1, \dots, \xi_{n_1} $ are i.i.d. with finite second moment and $ \max_{1 \le i \le n_1} E| \wh{\xi}_i - \xi_i | = o(1) $,
	then
	\[
	E \left| \frac{1}{n_1} \sum_{j=1}^{n_1} \wh{\xi}_j^2 - \frac{1}{n_1} \sum_{j=1}^{n_1} \xi_j^2 \right|  = o(1),
	\]
	and
	\[
	E \left| \left( \frac{1}{n_1} \sum_{j=1}^{n_1} \wh{\xi}_j \right)^2 - 
	\left( \frac{1}{n_1} \sum_{j=1}^{n_1} \xi_j \right)^2 \right| = o(1),
	\]
	as $ n_1 \to \infty $.
\end{lemma}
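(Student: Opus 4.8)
The plan is to derive both limits from the elementary factorization of a difference of squares, turning each into a product to which the Cauchy--Schwarz inequality applies. Throughout I would write $A_j = \wh{\xi}_j - \xi_j$ and exploit that the $\xi_j$ are i.i.d. with $E\xi_1^2 < \infty$, so that $\sup_j E\xi_j^2$ is finite.

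For the first display I would start from the triangle inequality,
\[
E\left| \frac{1}{n_1}\sum_{j=1}^{n_1}\bigl(\wh{\xi}_j^2 - \xi_j^2\bigr) \right| \le \max_{1\le j\le n_1} E\bigl|\wh{\xi}_j^2 - \xi_j^2\bigr|,
\]
and then expand $\wh{\xi}_j^2 - \xi_j^2 = A_j^2 + 2\xi_j A_j$ and bound each summand by
\[
E\bigl|\wh{\xi}_j^2 - \xi_j^2\bigr| \le E A_j^2 + 2\sqrt{E\xi_j^2}\,\sqrt{E A_j^2},
\]
applying Cauchy--Schwarz to the cross term. This reduces the first claim to $\max_j E A_j^2 = o(1)$.

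For the second display I would write $m = \frac{1}{n_1}\sum_{j} \xi_j$ and $\wh{m} = \frac{1}{n_1}\sum_j \wh{\xi}_j$, so that $\wh{m} - m = \frac{1}{n_1}\sum_j A_j$, and factor $\wh{m}^2 - m^2 = (\wh{m} - m)(\wh{m} + m)$. By Minkowski's inequality $\|\wh{m} - m\|_{L^2} \le \max_j \|A_j\|_{L^2}$, while $E m^2 = (E\xi_1)^2 + \Var(\xi_1)/n_1$ is bounded, so $\|\wh{m} + m\|_{L^2} = O(1)$; a final Cauchy--Schwarz step then gives $E|\wh{m}^2 - m^2| \le \|\wh{m} - m\|_{L^2}\,\|\wh{m} + m\|_{L^2} = o(1)$.

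The hard part is that every estimate above needs $\max_j \|A_j\|_{L^2} = o(1)$, i.e.\ control of the increments in $L^2$ rather than the $L^1$ control literally assumed: a single rare large value of $A_j$ can keep $E A_j^2$, and hence the difference of squares, bounded away from $0$ even when $E|A_j| \to 0$. I would therefore run the argument under the $L^2$ smallness of the increments, which is precisely what is available wherever the lemma is applied --- in the proof of Theorem~\ref{TwoSampleJack} one has $\wh{\xi}_i = \xi_i + A_n$ with $E(A_n^2) = o(1)$ --- and state the hypothesis of the lemma in that form.
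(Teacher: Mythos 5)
Your argument is correct and, for the first display, is essentially the computation the paper itself performs (it appears verbatim inside the proof of Theorem~\ref{TwoSampleJack}: $E|\wh{\xi}_i^2-\xi_i^2|\le 2\sqrt{E\xi_1^2}\sqrt{EA_n^2}+EA_n^2$). For the second display you factor $\wh{m}^2-m^2=(\wh{m}-m)(\wh{m}+m)$ and apply Minkowski and Cauchy--Schwarz at the level of the sample means, whereas the paper expands $\frac{1}{n_1^2}\sum_{i,j}E|\wh{\xi}_i\wh{\xi}_j-\xi_i\xi_j|$ term by term; your route is a little cleaner and, unlike the paper's display, does not tacitly assume that the perturbations are the same random variable $A_n$ for every index $j$.

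Your remark about the hypothesis is the most valuable part of the proposal and is correct: the stated assumption $\max_i E|\wh{\xi}_i-\xi_i|=o(1)$ does not control $EA_i^2$, and the lemma is false as literally stated. For instance, take the $\xi_j$ centered and $A_j$ independent of them with $A_j=\sqrt{n_1}$ with probability $1/n_1$ and $A_j=0$ otherwise; then $E|A_j|=n_1^{-1/2}\to 0$ while $E\bigl|\frac{1}{n_1}\sum_j(A_j^2+2\xi_jA_j)\bigr|\ge \frac{1}{n_1}\sum_j EA_j^2=1$, so the first conclusion fails. The paper's own proof quietly imports the bound $E|\wh{\xi}_j^2-\xi_j^2|=o(1)$ from the proof of Theorem~\ref{TwoSampleJack}, where it was derived from the $L_2$ bound $E(A_{n}^2)=o(1)$ rather than from the $L_1$ hypothesis written in the lemma. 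Restating the hypothesis as $\max_i E(\wh{\xi}_i-\xi_i)^2=o(1)$, as you propose, is exactly the right repair and is what is in fact available at every point where the lemma is invoked.
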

\begin{proof}
	Since
	\[
	E \left| \frac{1}{n_1} \sum_{j=1}^{n_1} \wh{\xi}_j^2 - \frac{1}{n_1} \sum_{j=1}^{n_1} \xi_j^2 \right| \le \frac{1}{n_1} \sum_{j=1}^{n_1} E|\wh{\xi}_j^2 - \xi_i^2| = o(1),
	\]
	as $ n_1 \to \infty $. Similarly,
	\[
	E | \wh{\xi}_i \wh{\xi}_j - \xi_i \xi_j | = E| (\xi_i + A_n )(\xi_j + A_n)  - \xi_i \xi_j  |
	\le E|\xi_i A_n| + E |\xi_j A_n| + E( A_n^2 ) = o(1),
	\] 
	such that the squared sample moments converge in $ L_1 $ as well, since
	\[
	E \left| \left( \frac{1}{n_1} \sum_{j=1}^{n_1} \wh{\xi}_j \right)^2 - 
	\left( \frac{1}{n_1} \sum_{j=1}^{n_1} \xi_j \right)^2 \right|
	\le \frac{1}{n_1^2} \sum_{i,j=1}^{n_1} E | \wh{\xi}_i \wh{\xi}_j - \xi_i \xi_j |  = o(1),
	\]
	as $ n_1 \to \infty $, follows. 
\end{proof}

\begin{lemma}
	\label{Conv2ndMoments}
	Suppose that $ Z_1, Z_2, \dots $ are i.i.d. with $ E( Z_1^2 )< \infty $ and $ \xi_1, \xi_2, \dots $ are random variables with 
	$ \max_{1 \le i \le n} | Z_i - \xi_i | = o(1) $, as $ n \to \infty $, a.s.. Then
	$ \frac{1}{n} \sum_{i=1}^n \xi_i \to E(Z_1) $, a.s.,  and $ \frac{1}{n} \sum_{i=1}^n \xi_i^2 \to E( Z_1^2 ) $, a.s.
\end{lemma}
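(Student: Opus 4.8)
The plan is to compare the $\xi$-averages with the corresponding $Z$-averages, for which the strong law of large numbers (SLLN) applies directly, and then to show that the two averages differ by an a.s.\ negligible amount by exploiting the uniform bound $\max_{1\le i\le n}|Z_i-\xi_i|=o(1)$.

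First I would treat the first moments. Since $Z_1,Z_2,\dots$ are i.i.d.\ with $E|Z_1|<\infty$ (which follows from $E(Z_1^2)<\infty$), the SLLN gives $\frac1n\sum_{i=1}^n Z_i\to E(Z_1)$ a.s. The triangle inequality then yields
\[
\left|\frac1n\sum_{i=1}^n \xi_i - \frac1n\sum_{i=1}^n Z_i\right|\le \frac1n\sum_{i=1}^n|\xi_i-Z_i|\le \max_{1\le i\le n}|\xi_i-Z_i|=o(1)\quad\text{a.s.},
\]
so the first assertion follows by combining the two displays.

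For the second moments, the key step is the factorization $\xi_i^2-Z_i^2=(\xi_i-Z_i)(\xi_i+Z_i)$ together with the bound $|\xi_i+Z_i|\le|\xi_i-Z_i|+2|Z_i|$. These give
\[
\frac1n\sum_{i=1}^n|\xi_i^2-Z_i^2|\le \Bigl(\max_{1\le i\le n}|\xi_i-Z_i|\Bigr)^2 + 2\Bigl(\max_{1\le i\le n}|\xi_i-Z_i|\Bigr)\cdot\frac1n\sum_{i=1}^n|Z_i|.
\]
The first term is $o(1)$ a.s.\ by assumption. For the second term, $\frac1n\sum_{i=1}^n|Z_i|\to E|Z_1|<\infty$ a.s.\ by the SLLN, so this factor is a.s.\ bounded, and multiplying by the $o(1)$ factor gives $o(1)$ a.s. Hence $\frac1n\sum_{i=1}^n\xi_i^2-\frac1n\sum_{i=1}^n Z_i^2\to0$ a.s.; since $Z_1^2,Z_2^2,\dots$ are i.i.d.\ with mean $E(Z_1^2)<\infty$, the SLLN gives $\frac1n\sum_{i=1}^n Z_i^2\to E(Z_1^2)$ a.s., completing the argument.

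The only mild obstacle is the second-moment step: the difference $\xi_i^2-Z_i^2$ is not controlled by $\max_i|\xi_i-Z_i|$ alone, because of the factor $\xi_i+Z_i$, which need not be bounded. This is resolved by absorbing the potentially large part into $2|Z_i|$ and using that the Ces\`aro average of $|Z_i|$ stabilizes by the SLLN; the genuinely small factor $\max_i|\xi_i-Z_i|$ then annihilates the whole expression.
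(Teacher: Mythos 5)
Your proof is correct and follows essentially the same route as the paper: SLLN for the $Z$-averages plus the uniform bound $\max_i|Z_i-\xi_i|=o(1)$ to transfer the limits, with the second-moment step handled via the factorization $|\xi_i^2-Z_i^2|=|\xi_i-Z_i|\,|\xi_i+Z_i|$. The only cosmetic difference is that you absorb $|\xi_i|$ into $|Z_i|+|\xi_i-Z_i|$ before averaging, whereas the paper keeps $\frac1n\sum_i(|Z_i|+|\xi_i|)$ and notes it equals $2E|Z_1|+o(1)$; both are valid.
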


\begin{proof}
	First notice that by the strong  law of large numbers
	\begin{align*}
	\left| \frac{1}{n} \sum_{i=1}^n \xi_i  - E(Z_1) \right|  
	& = \left|  \frac{1}{n} \sum_{i=1}^n ( Z_i - E(Z_1) )+  \frac{1}{n} \sum_{i=1}^n  (\xi_i - Z_i)  \right| \\
	& \le  \left|  \frac{1}{n} \sum_{i=1}^n ( Z_i - E(Z_1)  ) \right| + \max_{1 \le i \le n} | Z_i - \xi_i |, \\
	& = o(1),
	\end{align*}
	as $ n \to \infty $, a.s.. For the sample moments of order $2$, we have the estimate
	\begin{align*}
	\left| \frac{1}{n} \sum_{i=1}^n Z_i^2 - \frac{1}{n} \sum_{i=1}^n \xi_i^2 \right| 
	& \le  \left| \frac{1}{n} \sum_{i=1}^n | Z_i -  \xi_i | (|Z_i| + |\xi_i|) \right| \\
	& \le \max_{1 \le i \le n} | Z_i - \xi_i | \left(
	\frac{1}{n} \sum_{i=1}^n | Z_i | +  \frac{1}{n} \sum_{i=1}^n | \xi_i |  \right) \\
	& = \max_{1 \le i \le n} | Z_i - \xi_i | ( 2 E | Z_1 | + o(1) ) \\
	& = o(1),
	\end{align*}
	as $ n \to \infty $.
\end{proof}

\section{Proof of Theorem~\ref{Theorem_Taylor}}
\label{AppendixB}

In the sequel, $ \| \cdot \|_2 $ denotes the vector-2 norm, whereas $ \| \cdot \|_{L_p} $ stands for the $ L_p $-norm of a random variable, i.e. $ \| Z \|_{L_p} = \left( E |Z|^p \right)^{1/p} $.  

As discussed in  Remark~\ref{RemarkOneSample}, we let
\begin{equation}
\label{DefParVector}
\wh{\theta}_N = (N/n, \wt{S}_1^2, \wt{S}_2^2, \overline{X}_1, \overline{X}_2 )', 
\qquad \theta = (\lambda_1, \sigma_1^2, \sigma_2^2, \mu_1, \mu_2 )'.
\end{equation}
Also notice that $ \mu_1 = \mu_2 = \mu $. We have
\begin{equation}
\label{Expres}
\varphi( \wh{\theta}_N )  := \wh{\mu}_N 
= \gamma_N \overline{X}_1 + (1- \gamma_N) \overline{X}_2 
= 
\phi( \wh{\theta}_N ) + \psi( \wh{\theta}_N ),
\end{equation}
where
\begin{align*}
\phi( \wh{\theta}_N )  & = \gamma^\le( \wh{\theta}_N) \overline{X}_1 \eins_{\{ \wt{S}_1^2 \le \wt{S}_2^2 \}} + \gamma^>( \wh{\theta}_N) \overline{X}_1 \eins_{\{ \wt{S}_1^2 > \wt{S}_2^2 \}}, \\
\psi( \wh{\theta}_N ) & = 
[1-\gamma^\le(\wh{\theta}_N) ] \overline{X}_2 \eins_{\{ \wt{S}_1^2 \le \wt{S}_2^2 \}} +
[1-\gamma^>(\wh{\theta}_N) ] \overline{X}_2 \eins_{\{ \wt{S}_1^2 > \wt{S}_2^2 \}}.
\end{align*}
Observe that 
\begin{align*}
\phi( \theta ) & = \gamma^{\le}( \theta ) \mu_1 \eins_{ \{ \sigma_1^2 \le \sigma_2^2 \} }
+ \gamma^{>}( \theta ) \mu_1 \eins_{ \{ \sigma_1^2 > \sigma_2^2 \} }, \\
\psi( \theta ) & = [1-\gamma^{\le}(\theta)]\mu_2 \eins_{\{\sigma_1^2\le\sigma_2^2\}} +
[1-\gamma^{>}(\theta)] \mu_2 \eins_\{\sigma_2^2>\sigma_2^2\}, 
\end{align*}
such that especially the function $ \varphi $ satisfies
\begin{align*}
\varphi( \theta ) 
& = 
\gamma^\le(\theta) \mu_1 \eins_{\{ \sigma_1^2 \le \sigma_2^2 \}} +
\gamma^>(\theta) \mu_1 \eins_{\{ \sigma_1^2 > \sigma_2^2 \}} \\
& \quad
+ [1-\gamma^\le(\theta) ] \mu_2 \eins_{\{ \sigma_1^2 \le \sigma_2^2 \}} +
[1-\gamma^>(\theta) ] \mu_2 \eins_{\{ \sigma_1^2 > \sigma_2^2 \}}.
\end{align*}
Observe that 
\[
\varphi(  \theta ) = \mu, \qquad \text{if $ \mu_1 = \mu_2 = \mu $}.
\]
We show the result in detail for $ \theta $ with $ \sigma_1^2 < \sigma_2^2 $; the other case ($ \sigma_1^2 > \sigma_2^2 $) is treated analogously and we only indicate some essential steps. Further, we only discuss $ \phi( \wh{\theta}_N) $, since $ \psi( \wh{\theta}_N) $ can be handled similarly. We have to consider
\begin{align*}
\phi(\wh{\theta}_N ) - \phi( \theta )
& = \gamma^\le( \wh{\theta}_N ) \overline{X}_1 \eins_{\{ \wt{S}_1^2 \le \wt{S}_2^2\}}
+ \gamma^>( \wh{\theta}_N ) \overline{X}_1 \eins_{\{ \wt{S}_1^2 > \wt{S}_2^2 \}} \\
& \quad - \gamma^\le( \theta) \mu_1,
\end{align*}
which can be rewritten as
\begin{align*}
\phi( \wh{\theta}_N ) - \phi( \theta )
& =  [ \gamma^\le( \wh{\theta}_N ) \overline{X}_1 - \gamma^\le( \theta) \mu_1 ] \eins_{\{ \wt{S}_1^2 \le \wt{S}_2^2\}} \\
& \quad + [ \gamma^>( \wh{\theta}_N ) \overline{X}_1 - \gamma^>( \theta) \mu_2 ] \eins_{\{ \wt{S}_1^2 > \wt{S}_2^2 \}} \\
& \quad - \gamma^\le( \theta ) \mu_1 \eins_{\{ \wt{S}_1^2 > \wt{S}_2^2\}}
+  \gamma^>( \theta ) \mu_2 \eins_{\{ \wt{S}_1^2 > \wt{S}_2^2 \}} 
\end{align*}
The first term is the leading one, whereas the last two terms are of the order $ 1/N^2 $ in the $ L_2 $-sense, since e.g.
\[
E ( \gamma^\le( \theta ) \mu_1 \eins_{\{ \wt{S}_1^2 > \wt{S}_2^2 \}} )^2
\le \| \gamma^\le \|_\infty^2 \mu_1^2 P( \wt{S}_1^2 > \wt{S}_2^2 )
\]
and for $ \sigma_1^2 < \sigma_2^2 $ we have
\begin{equation}
\label{HigherOrderEstimateSampleVariance}
P( \wt{S}_1^2 \ge \wt{S}_2^2 ) = O\left( \frac{1}{N^2} \right)
\end{equation}
by Lemma~\ref{EstimationVariances}.
By symmetry, we also have
\begin{equation}
\label{HigherOrderEstimateSampleVariance2}
P( \wt{S}_1^2 < \wt{S}_2^2 ) = O(1/N^2), \qquad \text{if $ \sigma_1^2 > \sigma_2^2 $}.
\end{equation}
By our assumptions on $ \gamma^\le $, we have for the leading term of 
$ \phi( \wh{\theta}_N ) - \phi( \theta ) $
\begin{align*}
\gamma^\le( \wh{\theta}_N ) \overline{X}_1 - \gamma^\le( \theta )\mu_1
&= \nabla [\gamma^\le( \theta ) \mu_1] (\wh{\theta}_N - \theta) 
+ \frac12 (\wh{\theta}_N - \theta)' \Delta [ \gamma^\le(\theta) \mu_1]( \wh{\theta}_N^* ) ( \wh{\theta}_N - \theta ),
\end{align*}
which can be written as
\begin{equation}
\label{TaylorExpansion}
\sum_{i=1}^5 \frac{ \partial [\gamma^\le(\theta)\mu_1] }{ \partial \theta_i } (\wh{\theta}_{Ni} - \theta_i) + \frac12 \sum_{i,j=1}^5 \partial_{ij} [\gamma^\le(\theta) \mu_1]( \wh{\theta}_N^* ) ( \wh{\theta}_{Ni} - \theta_i )( \wh{\theta}_{Nj} - \theta_j ),
\end{equation}
for some $ \wh{\theta}_N^* $ between $ \wh{\theta}_N $ and $ \theta $, where
$ \wh{\theta}_N = ( \wh{\theta}_{Ni} )_{i=1}^5 $ 
and $ \theta = (\theta_i)_{i=1}^5 $. 
$\nabla [\gamma^\le( \theta ) \mu_1] $ denotes the gradient of the function $ \bar{\theta} \mapsto \gamma^\le(\bar{\theta}) \bar{\theta}_4 $, $ \bar{\theta} =(\bar{\theta}_1, \dots, \bar{\theta}_5)' \in \Theta $, evaluated at (the true value) $ \theta $,
and $ \partial_{ij} [\gamma^\le \mu_1]( \wh{\theta}_N^* ) $ stands for the second partial derivative with respect to $ \theta_i $ and $ \theta_j $ evaluated at a point $ \wh{\theta}_N^* $.
Observe that the linear term in the above expansion is asymptotically linear, i.e. it can be written as
\[
\nabla[ \gamma^{\le}( \theta ) \mu_1 ]( \wh{\theta}_N - \theta ) =
\frac1N \sum_{j=1}^N \left\{ \sum_{i=1}^5 \frac{\partial [\gamma^\le(\theta)\mu_1] }{ \partial \theta_i} \xi_{ij} \right\} + R_N^{\gamma^\le}, \quad N E( R_N^{\gamma^\le} )^2 = o(1),
\]
for appropriate random variables $ \xi_{ij} $, since the coordinates are asymptotically linear, i.e. 
\[ \wh{\theta}_{Ni} - \theta_i = \frac1N \sum_{j=1}^N \xi_{ij} + R_{Ni}, \quad \text{with} \quad  N E(R_{Ni}^2) = o(1). \]
This follows from the facts that
$ \overline{X}_i $, $ i = 1, 2 $, are linear statistics and $ \wt{S}_i^2 - \sigma_i^2 $, $ i = 1, 2 $, are asymptotically linear by virtue of Lemma~\ref{SampleVarAsLinear}. Thus, it
suffices to estimate the second order terms of the Taylor expansion.  First observe that   
\begin{equation}
\label{EstThetaHat}
N E( \wh{\theta}_{Ni} - \theta_i )^4 = N E \left| \frac1N \sum_{j=1}^N \xi_{ij} + R_{Ni} \right|^4 = O(1/N),
\end{equation}
for $ i = 1, \dots, 5 $, such that
\begin{equation}
\label{EstThetaHatSq}
E \| \wh{\theta}_N - \theta \|_2^2 = O(1/N^2),
\end{equation}
where $ \| \cdot \|_2 $ denotes the vector-2 norm. For the arithmetic means (\ref{EstThetaHat}) follows because the remainder term vanishes and $ E(\sum_{j=1}^N \xi_{ij} )^r = O(N^{r/2})$ if $ E|\xi_{ij}|^r < \infty$, and for the estimators $ \wt{S}_i^2 $ because of Lemma~\ref{SampleVarAsLinear}. The quadratic terms can be estimated as follows.
\begin{align*}
& N E \left( \partial_{ij} [\gamma^\le(\theta) \mu_1] ( \wh{\theta}_N^* ) ( \wh{\theta}_{Ni} - \theta_i )( \wh{\theta}_{Nj} - \theta_j ) \right)^2 \\
& \quad \le \| \partial_{ij} [\gamma^\le(\theta) \mu_1]\|_\infty^2 N E\left( |  \wh{\theta}_{Ni} - \theta_i  |^2 |  \wh{\theta}_{Nj} - \theta_j  |^2 \right) \\
& \quad =  O\left( \sqrt{ N E | \wh{\theta}_{Ni} - \theta_i  |^4 } \sqrt{ N E | \wh{\theta}_{Nj} - \theta_j  |^4 } \right) \\
& \quad = O( 1/N ).
\end{align*}
Here $ \| x \|_\infty = \max_{1\le i \le \ell} | x_i | $ denotes the $ l_\infty $-norm of a vector $ x = (x_1, \dots, x_\ell)' \in \R^{\ell } $. Therefore, we obtain the estimate
\[
N E \left( \frac12 \sum_{i,j=1}^5 \partial_{ij} [\gamma^\le(\theta) \mu_1]( \wh{\theta}_N^* ) ( \wh{\theta}_{Ni} - \theta_i )( \wh{\theta}_{Nj} - \theta_j ) \right)^2 = O( 1/ N )
\]
for the quadratic term of the above Taylor expansion. Switching back to the definition (\ref{DefParVector}) 
of $ \wh{\theta}_N $ and $ \theta $, the above arguments show that for ordered variances $ \sigma_1^2 < \sigma_2^2 $
\[
\phi( N/n, \wh{\theta}_N ) - \phi( \lambda_1, \theta ) 
= \nabla [\gamma^\le( \theta ) \mu_1] (\wh{\theta}_N - \theta) \eins_{\{ \wt{S}_1^2 \le \wt{S}_2^2 \}}  
+ \nabla [\gamma^>( \theta ) \mu_2] (\wh{\theta}_N - \theta) \eins_{\{ \wt{S}_1^2 > \wt{S}_2^2 \}} + R_N^\phi,
\]
for some remainder $ R_N^\phi $ with $ N E( R_N^\phi)^2 = O(1/N) $. The treatment
of $ \psi( \wh{\theta}_N ) - \psi(\theta) $ is similar and leads to
\[
\psi( N/n, \wh{\theta}_N ) - \psi( \lambda_1, \theta ) 
= \nabla [(1-\gamma^\le( \theta )) \mu_2] (\wh{\theta}_N - \theta) \eins_{\{ \wt{S}_1^2 \ge \wt{S}_2^2 \}}  
+ \nabla [(1-\gamma^>( \theta )) \mu_1] (\wh{\theta}_N - \theta) \eins_{\{ \wt{S}_1^2 < \wt{S}_2^2 \}} + R_N^\psi,
\]
for some remainder term $ R_N^\psi $ with $ N E( R_N^\psi)^2 = O(1/N) $. Putting things together and collecting terms leads to
\begin{align*}
\wh{\mu}_N(\gamma) - \mu 
& = \biggl[ (\nabla [ \gamma^\le(\theta) \mu_1] + \nabla [(1-\gamma^\le(\theta)) \mu_2 ])(\wh{\theta}_N - \theta ) \eins_{\{ \wt{S}_1^2 \le \wt{S}_2^2 \}} \\
& \quad +
(\nabla [\gamma^>(\theta) \mu_2] + \nabla [(1-\gamma^>(\theta)) \mu_1]) (\wh{\theta}_N - \theta ) \eins_{\{ \wt{S}_1^2 > \wt{S}_2^2 \}} \biggr] + R_N^\gamma,
\end{align*}
for a remainder $ R_N^\gamma $ with $ N E( R_N^\gamma )^2 = O(1/N) $.
Replacing the indicators $ \eins_{\{ \wt{S}_1^2 \le \wt{S}_2^2 \}} $ and $ \eins_{\{ \wt{S}_1^2 > \wt{S}_2^2 \}} $ by $ \eins_{\{ \sigma_1^2 \le \sigma_2^2 \}} (=1) $ and  $ \eins_{\{ \sigma_1^2 > \sigma_2^2 \}} (=0) $, respectively, adds additional correction terms
$ R_N^{(1)} $ and $ R_N^{(2)} $ with
$ N E( R_N^{(i)} )^2 = o(1) $, $ i = 1, 2 $. This can be seen as follows: We have for instance
\[
\nabla [\gamma^\le( \theta )\mu_1]  \eins_{\{ \wt{S}_1^2 \le \wt{S}_2^2 \}}  ( \wh{\theta}_N - \theta )
= \nabla [\gamma^\le( \theta ) \mu_1] \eins_{\{ \sigma_1^2 \le \sigma_2^2 \}}   ( \wh{\theta}_N - \theta ) + U_N,
\]  
where
\[
U_N = \nabla [\gamma^\le( \theta ) \mu_1] ( \eins_{\{ \wt{S}_1^2 \le \wt{S}_2^2 \}}  - \eins_{\{ \sigma_1^2 \le \sigma_2^2 \}}  ) ( \wh{\theta}_N - \theta ).
\]
Observe that for $ \sigma_1^2 < \sigma_2^2 $ we have $ J = | \eins_{ \{ \wt{S}_1^2 \le \wt{S}_2^2 \}} - \eins_{ \{ \sigma_1^2 
	\le \sigma_2^2 \} } | = \eins_{ \{ \wt{S}_1^2 > \wt{S}_2^2 \}} $. 
If we put $ C = \max_j \sup_\theta \left| \frac{ \partial [\gamma^\le(\theta) \mu_1 ] }{ \partial \theta_j } \right| < \infty $, we may estimate
\begin{align*}
E(U_N^2) & = 
E \left( \nabla[ \gamma^\le( \theta ) \mu_1 ] (\wh{\theta}_N - \theta ) \eins_{ \{ \wt{S}_1^2 > \wt{S}_2^2 \}}  
\right)^2  \\
& \le E \left( 
\sum_{j=1}^5 \left| \frac{ \partial [\gamma^\le(\theta) \mu_1 ] }{ \partial \theta_j } \right|  | \wh{\theta}_{Nj} - \theta_j | \eins_{ \{ \wt{S}_1^2 > \wt{S}_2^2 \}} 
\right)^2 \\
&\le C^2 E \left( 5 \max_j | \wh{\theta}_{Nj} - \theta_j |^2 \eins_{ \{ \wt{S}_1^2 > \wt{S}_2^2 \}} \right)^2 \\
& \le      25 C^2 \sqrt{ E \sum_{j=1}^5 | \wh{\theta}_{Nj} - \theta_j |^4 } \sqrt{ P( \wt{S}_1^2 > \wt{S}_2^2 ) }.
\end{align*}
Using
\[
E \sum_{j=1}^5 | \wh{\theta}_{Nj} - \theta_j |^4 = O(1/N^2)
\]
and $ P( \wt{S}_1^2 > \wt{S}_2^2 ) = O( 1/N^2 ) $, see (\ref{HigherOrderEstimateSampleVariance}) and (\ref{EstThetaHatSq}), we therefore arrive at
\[
N E( U_N )^2 =  O( 1/N ).
\]
If, contrary, $ \sigma_1^2 > \sigma_2^2 $, we observe that $ J = \eins_{ \{ \wt{S}_1^2 \le \wt{S}_2^2 \}} $, and similar arguments combined with (\ref{HigherOrderEstimateSampleVariance2}) imply
\[
N E( U_N )^2 = O( 1/N ).
\]  
Putting things together, we therefore arrive at assertion (i),
\begin{align*}
\wh{\mu}_N(\gamma) - \mu 
& = \biggl[ (\nabla [ \gamma^\le(\theta) \mu] + \nabla [(1-\gamma^\le(\theta)) \mu ]) \eins_{\{ \sigma_1^2 < \sigma_2^2 \}} \\
& \quad +
(\nabla [\gamma^>(\theta) \mu] + \nabla [(1-\gamma^>(\theta)) \mu]) \eins_{\{ \sigma_1^2 > \sigma_2^2 \}} \biggr]( \wh{\theta}_N - \theta) + R_N^\gamma,
\end{align*}
since $ \mu_1 = \mu_2 = \mu $ by assumption.

It remains to discuss the form of the leading term when $ \mu = \mu_1 = \mu_2 $ by calculating the partial derivatives. Observe that
\[
\frac{ \partial [ \gamma^\le(\theta) \mu_1 + (1-\gamma^\le(\theta)) \mu_2 ] }{ \partial \sigma_i^2 } = \frac{ \partial \gamma^\le( \theta ) }{ \partial \sigma_i^2 } \mu_1
- \frac{ \partial \gamma^\le( \theta ) }{ \partial \sigma_i^2 } \mu_2
\]
and thus vanishes if $ \mu_1 = \mu_2 $. Hence, the asymptotic linear statistic governing $ \wh{\mu}_N(\gamma) - \mu $ does not depend on the sample variances. Further,
\[
\frac{ \partial [\gamma^\le(\theta) \mu_1 + (1-\gamma^\le(\theta)) \mu_2 ] }{ \partial \mu_1 } = \frac{ \partial \gamma^\le(\theta)}{ \partial \mu_1 } \mu_1 + \gamma^\le( \theta ) - \frac{ \partial \gamma^\le(\theta) }{ \partial \mu_1 } \mu_2
\]
and
\[
\frac{ \partial [\gamma^\le(\theta) \mu_1 + (1-\gamma^\le(\theta)) \mu_2 ] }{ \partial \mu_2 } = \frac{ \partial \gamma^\le(\theta)}{ \partial \mu_2 } \mu_1  - \frac{ \partial \gamma^\le(\theta) }{ \partial \mu_2 } \mu_2 - \gamma^\le(\theta) +1.
\]
Hence, under the common mean constraint $ \mu_1 = \mu_2 $ those two partial derivatices are given by $ \gamma^\le(\theta) $ and $ 1-\gamma^\le(\theta) $, respectively. Thus, we may conclude that 
\[
\wh{\mu}_N(\gamma) - \mu = \gamma(\theta) ( \overline{X}_1 - \mu ) + (1-\gamma(\theta)) ( \overline{X}_2 - \mu ) + R_N,
\]
with $ N E( R_N^2 ) = O(1/N) $, which verifies (ii).

(iii)  Assertion (ii) shows that $ \wh{\mu}_N(\gamma) $ is an asymptotically linear two-sample statistic with kernels $ h_1(x) = \gamma(\theta) (x-\mu_1) $ and $ h_2(x) = (1-\gamma(\theta)) (x-\mu_2) $, which 
satisfy $ \int | h_i(x) |^4 \, d F_i(x) < \infty $, $ i = 1, 2 $,
and  a remainder term $ R_N $ with $ N E(R_N^2) = o(1) $, as $ N \to \infty $,
such that (\ref{CrucialCondRemainder1}) holds. It remains to verify (\ref{CrucialCondRemainder}).  
Our starting point is the Taylor expansion (\ref{TaylorExpansion}), where we have to
study the	second sum. First observe that replacing $ \partial_{ij}[\gamma^{\le}(\theta)\mu_1](\wh{\theta}_N^*) $ by 
$ \partial_{ij}[\gamma^{\le}(\theta)\mu_1](\theta) $ gives error terms $ F_{N,ij}$ satisfying
\begin{align*}
E( F_{N,ij}^2 ) & = E[ \{ \partial_{ij}[\gamma^{\le}(\theta)\mu_1](\wh{\theta}_N^*) -
\partial_{ij}[\gamma^{\le}(\theta)\mu_1](\theta) \} (\wh{\theta}_{Ni} - \theta_i )(\wh{\theta}_{Nj}-\theta_j) ]^2 \\
& \le 
\sqrt{ E \left( \partial_{ij}[\gamma^{\le}(\theta) \mu_1 ] \biggr|_{z=\theta}^{z=\wh{\theta}_N^*} \right)^4 } 
\sqrt{ E( \wh{\theta}_{Ni} - \theta_i )^4 ( \wh{\theta}_{Nj} - \theta_j )^4 }.
\end{align*}	
The first factor is $ o(1) $, by Fubini's theorem, since the integrand is continuous, bounded and converges to $0$, as $ N \to \infty $, a.s. Further
\[
E( ( \wh{\theta}_{Ni} - \theta_i )^4 ( \wh{\theta}_{Nj} - \theta_j )^4  )
\le \sqrt{ E( \wh{\theta}_{Ni} - \theta_i )^8 } \sqrt{ E( \wh{\theta}_{Ni} - \theta_i )^8 }
= O(N^{-4}),
\]
such that 
\begin{equation} 
\label{FN_rateij}
N^2 E( F_{N,ij}^2 ) = o(1) N^2 O(N^{-2}) = o(1),
\end{equation}
as $ N \to \infty $, follows. In what follows, denote by $ \wt{R}_N $  the
second term in (\ref{TaylorExpansion}) with $ \wh{\theta}_N^* $ replaced by $ \theta $,
i.e.
\[
\wt{R}_N = 
\frac12 \sum_{i,j=1}^5 \partial_{ij} [\gamma^\le(\theta) \mu_1]( \theta ) ( \wh{\theta}_{Ni} - \theta_i )( \wh{\theta}_{Nj} - \theta_j ),
\]
such that $ F_N = \frac{1}{2} \sum_{i,j=1}^5 F_{N,ij} $ satisfies
\[ R_N = \wt{R}_N + F_N. \] 
Clearly, (\ref{FN_rateij}) implies
\begin{equation} 
\label{FN_rate}
N^2 E( F_{N}^2 ) = o(1),
\end{equation}
In order to show
\[
N^2 E( R_N - R_{N-1})^2 = o(1),
\]
as $ N \to \infty $, by virtue of (\ref{FN_rate}) it suffices to show that
\begin{equation}
\label{ToShowTilde}
N^2 E(\wt{R}_N - \wt{R}_{N-1})^2 = o(1),
\end{equation}
as $ N \to \infty $. Indeed, since we already know that $ N^2 E(R_N^2) = o(1) $, we have 
\begin{align*}
N^2 E(\wt{R}_N^2) & = N^2 E(R_N^2) + N^2 E( F_N^2 ) - 2 N^2 E( R_N F_N )
\end{align*}
with $ N^2 E( F_N^2 ) = o(1) $ and
\[
| N^2 E (R_N F_N) | \le \sqrt{ N^2 E( R_N^2 ) } \sqrt{ N^2 E( F_N^2 ) } = o(1),
\]
as $ N \to \infty $. Hence, $ N^2 E( \wt{R}_N^2 ) = o(1) $, as $ N \to \infty $, as well.
Due to the high rate of convergence of $ F_N $ in (\ref{FN_rate}), it follows that $ N^2 E( F_N - F_{N-1} )^2 = o(1) $, as $ N \to \infty $, since
\begin{align*}
N^2   E( F_N - F_{N-1} )^2 & \le N^2 \left( \sqrt{ E( F_N^2 ) } + \sqrt{ E( F_{N-1}^2 ) } \right)^2 \\
& = N^2 E( F_N^2 ) + N^2 E( F_{N-1}^2 ) + 2 \sqrt{ N^2 E( F_N^2 ) } \sqrt{ N^2 E( F_{N-1}^2 ) } \\
& = o(1),
\end{align*}
as $ N \to \infty $. Now it follows that (\ref{ToShowTilde}) implies $ N^2E(R_N - R_{N-1})^2 = o(1) $, as $ N \to \infty $, since by Minkowski's inequality
\begin{align*}
N^2 E( R_N - R_{N-1} )^2 &= N^2 E( \wt{R}_N - \wt{R}_{N-1} + (F_N - F_{N-1}) )^2 \\
&=  N^2 E( \wt{R}_N - \wt{R}_{N-1} )^2 +
N^2 E( F_N - F_{N-1} )^2 \\
& \quad + 2 N^2 E( \wt{R}_N - \wt{R}_{N-1} )( F_N - F_{N-1} ),
\end{align*}
where
\begin{align*}
| N^2 E( \wt{R}_N - \wt{R}_{N-1} )( F_N - F_{N-1} ) |
& \le N^2 \sqrt{ E( \wt{R}_N - \wt{R}_{N-1} )^2 } \sqrt{ E( F_N - F_{N-1} )^2 } \\
& \le \sqrt{ N^2 E( \wt{R}_N - \wt{R}_{N-1} )^2 } \sqrt{ N^2 E( F_N - F_{N-1})^2 } \\
& = o(1),
\end{align*}
as $ N \to \infty $. This verifies that it is sufficient to show (\ref{ToShowTilde}).
As the sum is finite and the second order derivatives $ \partial_{ij}[ \gamma{\le}( \theta ) \mu_1 ]( \theta ) $ do not depend on $N$ and are bounded, (\ref{ToShowTilde}) follows, if we establish,
for each pair $ i, j \in \{ 1, \dots, 5 \} $,
\begin{equation}
\label{ToShowDiffEstimates}
N^2 E[(\wh{\theta}_{Ni} - \theta_i ) (\wh{\theta}_{Nj} - \theta_j ) -
(\wh{\theta}_{N-1,i} - \theta_i ) (\wh{\theta}_{N-1,j} - \theta_j ) ]^2 = o(1),
\end{equation}
as $ N \to \infty $. We shall make use of the decomposition $ \wh{\theta}_{Ni} - \theta_i  = L_{Ni} + R_{Ni} $ in a linear statistic and an additional remainder term (if $ \wh{\theta}_{Ni} $ is not a sample mean), for $ i = 1, \dots, 5 $. (\ref{ToShowDiffEstimates}) follows, if we prove
\begin{align}
\label{F1}
N^2 E( L_{Ni} L_{Nj} - L_{N-1,i} L_{N-1,j} )^2 = o(1), \\
\label{F2}
N^2 E( L_{Ni} R_{Nj} - L_{N-1,i} R_{N-1,j} )^2 = o(1), \\
\label{F3}
N^2 E( R_{Ni} R_{Nj} - R_{N-1,i} R_{N-1,j} )^2 = o(1),
\end{align}
as  $ N \to \infty $, for $ i, j = 1, \dots, 5 $.
Those estimates will follow from the following estimates for $ L_{Ni} $ and $ R_{Ni} $:
\begin{align}
\label{L1}
E( L_{Ni}^4 ) &= O(1/N^2), \\
\label{L2}
E( L_{Ni} - L_{N-1,i} )^4 &= O(1/N^4), \\
\label{R1}
E( R_{Ni}^4 ) &= O(1/N^4), \\
\label{R2}
E(R_{Ni} - R_{N-1,i})^2 &= O(1/N^3), \\
\label{R3}
E( R_{Ni} - R_{N-1,i} )^4 &= O( 1/N^6 ),
\end{align}
as $ N \to \infty $, for $ i, j = 1, \dots, 5 $. 
(\ref{L2}) holds, because for a linear statistic, say, 
$ L_N = N^{-1} \sum_{i=1}^N \xi_i $, with i.i.d. mean zero summands $ \xi_1, \dots, \xi_N $ with finite fourth moment, the formula
\[
L_{N} - L_{N-1} = \left( \frac1N - \frac{1}{N-1} \right) \sum_{i<N} \xi_i + \frac{\xi_N}{N}
\]
leads to the estimate
\[
\| L_{N} - L_{N-1} \|_{L_4} \le \frac{1}{N(N-1)} \left\| \sum_{i<N} \xi_i \right\|_{L_4} + \frac{ ( E( \xi_1^4 ) )^{1/4} }{ N } = O( 1/N ),
\]
since $ E( \sum_{i<N} \xi_i )^4 = O(N^2) $. For the sample means the remainder term vanishes, such that (\ref{R1})-(\ref{R3}) hold trivially, and for the sample variances (\ref{R1})-(\ref{R3}) are shown in Lemma~\ref{SampleVarAsLinear}. Now (\ref{F1})-(\ref{F3}) can be shown as follows. For each pair $ i, j \in \{ 1, \dots, 5 \} $ the  identity
\[ 
L_{Ni} R_{Nj} - L_{N-1,i} R_{N-1,j} 
= (L_{Ni} - L_{N-1,i}) R_{Nj} + L_{N-1,i}(R_{Nj} - R_{N-1,j} ) 
\] 
yields
\begin{align*}
\| L_{Ni} R_{Nj} - L_{N-1,i} R_{N-1,j} \|_{L_2} 
&\le \| L_{N,i} - L_{N-1,i} \|_{L_4} \| R_{Nj} \|_{L_4}   \\
&+ \| L_{N-1,i} \|_{L_4} \| R_{Nj} - R_{N-1,i} \|_{L_4} \\
& = O(1/N) O(1/N) + O(1/\sqrt{N}) O( 1 / N^{3/2} ) \\ 
& = O(1 / N^2 ),
\end{align*}
such that
\[
E( L_{Ni}R_{Nj} - L_{N-1,i} R_{N-1,j} )^2 = O(1/N^4),
\]
which verifies (\ref{F2}). Similarly, we have \[ L_{Ni} L_{Nj} - L_{N-1,i} L_{N-1,j} = (L_{Ni} - L_{N-1,i}) L_{Nj} + L_{N-1,i}( L_{Nj} - L_{N-1,j} ), \] where
\begin{align*}
E( (L_{Ni} - L_{N-1,i})^2 L_{Nj}^2  ) &\le \sqrt{ E( L_{Ni} - L_{N-1,i} )^4 } \sqrt{ E( L_{Nj}^4 ) } \\
&= O(1/N^2) O(1/N) \\
& = O(1/N^3),
\end{align*}
which leads to (\ref{F1}).
Combining those results shows that the remainder term, $ R_N $, also satisfies
the condition (\ref{CrucialCondRemainder}).  
Therefore, the consistency and asymptotic unbiasedness of the jackknife variance estimator follows from Theorem~\ref{TwoSampleJack}. The proof is complete. $ \hfill \qed$

\bibliographystyle{chicago}
\bibliography{lit}


\end{document}